
\documentclass[a4paper,11pt,draft, reqno]{amsart}

\usepackage[margin=18mm]{geometry}
\usepackage{amssymb, amsmath, amsthm, amscd}

\usepackage[T1]{fontenc}
\usepackage{eucal,mathrsfs,dsfont}
\usepackage{color}


\renewcommand{\leq}{\leqslant}
\renewcommand{\geq}{\geqslant}
\renewcommand{\le}{\leqslant}
\renewcommand{\ge}{\geqslant}

\definecolor{mno}{rgb}{0.5,0.1,0.5}

\newcommand{\R}{\mathds R}
\newcommand{\bS}{\mathds S}

\newcommand{\e}{\varepsilon}
\newcommand{\T}{\mathds T}

\newcommand{\Pp}{\mathds P}
\newcommand{\Ee}{\mathds E}

\newcommand{\I}{\mathds 1}

\newcommand{\D}{\mathscr{D}}
\newcommand{\Z}{\mathds Z}

\newcommand{\F}{\mathscr{F}}

\def\eps{{\varepsilon}}

\newcommand{\sL}{\mathcal{L}}
\newcommand{\sA}{\mathcal{A}}

\def\<{\langle}
\def\>{\rangle}

\def\wt{\widetilde}

\newtheorem{theorem}{Theorem}[section]
\newtheorem{lemma}[theorem]{Lemma}
\newtheorem{proposition}[theorem]{Proposition}
\newtheorem{corollary}[theorem]{Corollary}

\theoremstyle{definition}

\newtheorem{example}[theorem]{Example}
\newtheorem{remark}[theorem]{Remark}

\numberwithin{equation}{section}

\begin{document}
\allowdisplaybreaks

\title[Periodic homogenization of non-symmetric L\'evy-type processes]
{\bfseries Periodic homogenization of non-symmetric L\'evy-type
processes}
\author{Xin Chen\qquad Zhen-Qing Chen\qquad Takashi Kumagai\qquad Jian Wang}

 \date{July 6, 2020}

\maketitle

\begin{abstract}
In this paper, we study
 homogenization problem for strong Markov processes on $\R^d$
 having infinitesimal generators
 $$
 \sL f(x)=\int_{\R^d}\left(f(x+z)-f(x)-\langle \nabla f(x), z\rangle \I_{\{|z|\le 1\}} \right) k(x,z)\,
 \Pi (dz)  +\langle b(x), \nabla f(x) \rangle, \quad f\in C^2_b (\R^d)
 $$
 in periodic media, where $\Pi$ is a
non-negative
 measure on  $\R^d$ that does not charge the origin $0$,
 satisfies $\int_{\R^d}  (1 \wedge |z|^2)\, \Pi (dz)<\infty$,
 and can be singular with respect to the Lebesgue measure on $\R^d$.
 Under a proper scaling, we show the scaled processes
converge weakly to L\'evy  processes on $\R^d$.
The results are a counterpart of the celebrated work
\cite{BLP,Bh} in the jump-diffusion setting.
In particular,
we completely characterize the homogenized limiting processes
when $b(x) $ is a bounded continuous      multivariate 1-periodic $\R^d$-valued  function,
  $k(x,z)$ is  a non-negative  bounded  continuous function that is
 multivariate 1-periodic in  both $x$ and $z$ variables,
 and, in spherical coordinate $z=(r, \theta) \in \R_+\times \bS^{d-1}$,
$$
\I_{\{|z|>1\}}\,\Pi (dz) = \I_{\{  r>1\}}  \varrho_0(d\theta) \, \frac{ dr }{r^{1+\alpha}}
$$
with  $\alpha\in (0,\infty)$ and $\varrho_0$
being  any
finite measure  on the unit sphere $\bS^{d-1}$ in $\R^d$.
Different phenomena occur depending on  the values of $\alpha$; there are five cases:
 $\alpha \in (0, 1)$, $\alpha =1$, $\alpha \in (1, 2)$,
$\alpha =2$ and $\alpha \in (2, \infty)$.

\bigskip

\noindent\textbf{Keywords:}  homogenization; L\'evy-type process;
martingale problem; corrector

\bigskip

\noindent \textbf{MSC 2020:} 60F17; 60G51;  60J76;  60J25.

\end{abstract}

\allowdisplaybreaks

\section{Introduction}\label{section1}
In the celebrated work
\cite{BLP,Bh} the authors studied the
periodic
homogenization of a
diffusion $X :=( X_t)_{t\ge0}$ on $\R^d$ generated  by the following second-order elliptic operator
 $$
\hat \sL
f(x)=\frac{1}{2}\sum_{i,j=1}^d a_{ij}(x ) \frac{\partial ^2f(x)}{\partial x_i\partial
x_j}+ \sum_{i=1}^d b_i(x)\frac{\partial f(x)}{\partial x_i},
$$
where the coefficients
$(a_{ij}(x))_{1\le i,j\le d}$
and $b(x):=(b_i(x))_{1\le i\le d}$ are bounded and  multivariate 1-periodic (that is, they can be viewed as bounded functions
defined on the $d$-dimensional torus $\T^d:=(\R/\Z)^d$).  Under the  assumptions that
each $a_{ij}(\cdot) $
has  bounded second derivatives, each $ b_i(\cdot)$ has bounded first derivatives,
and $(a_{ij}(x))_{1\le i,j\le d}$ is uniformly elliptic, they showed
 $$
   \left\{ X_t^\e -\e^{-1} \bar b t:  \ {t\ge0} \right\}
 $$
 converges weakly as $\e \to 0$ to a
driftless Brownian motion with  covariance matrix
$$  \bigg(\int_{\T^d} \sum_{k,l=1}^d\Big(\delta_{ki}-\frac{\partial \psi_i(x)}{\partial
x_k}\Big) a_{kl}(x)\Big(\delta_{lj}-\frac{\partial
\psi_j(x)}{\partial x_l}\Big)\,\mu(dx)\bigg)_{1\le i,j\le d}.
$$
Here, $ X^\e_t:=\e X_{t/\e^{2}}$ for  $t\ge0$, $\mu(dx)$ is the unique invariant probability
measure
for the quotient process of $X$ on $\T^d$,
 $\bar b=\int_{\T^d} b(x)\,\mu(dx)$,  and $\psi
\in C^2(\R^d)$ is the unique periodic solution to the equation $$
\hat \sL
\psi(x)=b(x)-\bar b \quad  \hbox{on }  \T^d.
$$

The goal of this paper is to study
the periodic homogenization of jump diffusions
whose infinitesimal generators are of the following form when acting on $C_b^2(\R^d)$:
\begin{equation}\label{e4-1}
\sL f(x)=\int_{\R^d}\left(f(x+z)-f(x)-\langle \nabla f(x), z\rangle \I_{\{|z|\le 1\}} \right) k(x,z)\,
\Pi (dz)
+\langle b(x), \nabla f(x) \rangle.
\end{equation}
Here, $\Pi (dz)$ is a non-negative
 measure on $\R^d$
that does not charge at the origin $0$ and satisfies
$
\int_{\R^d}(1\wedge |z|^2)\,\Pi (dz) <\infty;
$
$b(x)$  is
 a  bounded  continuous  multivariate 1-periodic $\R^d$-valued function,
 and $k(x,z): \R^d\times \R^d  \to
   [0,\infty)$
is a function that is bounded so that $x\mapsto k(x,z)$ is multivariate 1-periodic
for each fixed $z\in \R^d$
and
\begin{equation}\label{e0}
\lim_{y\to x}\sup_{z\in \R^d}|k(y,z)-k(x,z)|=0.
\end{equation}

Since $k(\cdot,z)$ and $b(\cdot)$ are multivariate 1-periodic,
it is easy to verify that $\sL f$ is
pointwisely  well defined as a function on $\T^d$
for every $f\in C^2(\T^d)$.  By the maximum principle, $(\sL, C^2(\T^d))$ can be extended to a closed operator
$(\sL, \mathscr{D}(\sL))$ on $C(\T^d)$.
 {\it Throughout the paper,
the following two assumptions are in force.}

 \begin{itemize} \it
\item [\bf (A1)] There exists a strong Markov process $X:=\big((X_t)_{t\ge 0};(\Pp_x)_{x\in \R^d}\big)$ associated with $\sL$
in the sense that
for every $f\in C_b^{1,2}(\R_+\times\R^d)$
with $\R_+:=[0,\infty)$,
$$\left\{
f(t,X_t)-f(0,X_0)-\int_0^t \bigg(\frac{\partial f}{\partial
s}(s,X_s)+\sL f(s,\cdot)(X_s)\bigg)\,ds,\,\, t\ge0\right\}
$$
is a martingale under $\Pp_x$ for all $x\in \R^d$ with respect to
the natural filtration generated by $X$.

\item [\bf (A2)] Regarding $X$ as an $\T^d$-valued process, the process $X$ is exponentially ergodic in the sense that
 there exist a unique
invariant probability  measure $\mu(dx)$ and constants $\lambda_1$,
$C_0>0$ so that
\begin{equation}\label{a1-2-1}
\sup_{x\in \T^d}|\Ee_xf(X_t)-\mu(f)|\le C_0e^{-\lambda_1
t}\|f\|_\infty,\quad t>0,\ f\in C_b(\T^d).
\end{equation}
\end{itemize}

Assumption {\bf (A1)} is satisfied, when the martingale problem for the operator $(\sL, C_b^2(\R^d))$ is well posed.
The latter has been extensively investigated in the literature, see \cite{CCW, CZ,CZ1,CZ2, Peng}
and the
references therein. See also \cite[Theorem 3.1]{Ku} for
more recent study on the existence of a martingale solution associated with L\'evy type operators whose corresponding canonical
process has the strong Markov property. Since $\T^d$ is compact, Assumption {\bf (A2)} is a direct consequence of the irreducibility (that is, for any $t>0$,
$x\in \R^d$ and any non-empty open set $U\subset \R^d$, $\Pp_x(X_t\in U)>0$) and the strong Feller property (that is, for any $f\in B_b(\R^d)$ and $t>0$, $x\mapsto \Ee_xf(X_t)$ is bounded and continuous) of the process $X$; see \cite[Theorem 1.1]{Wa}. Assumption {\bf (A2)} also holds, if the process $X$ admits a transition density function $p(t,x,y)$ with respect to the Lebesgue
measure so that for any $t>0$, the function $(x,y)\mapsto p(t,x,y)$ is continuous on $\R^d\times \R^d$, and that there is a non-empty open set $U\subset \R^d$ such that $p(t,x,y)>0$  for all
$t>0$, $x\in \R^d$ and $y\in U$;
see \cite[p.\ 365, Theorem 3.1]{BLP} for a modification of Doeblin's celebrated result.
The reader is referred
to Subsection \ref{S:7.2} for concrete examples on Assumptions {\bf (A1)} and {\bf (A2)}.

In order to deal with
the scaling limit of
$X$
that requires recentering
(which includes cases considered in Subsection \ref{section3-2}, Section
\ref{section4} and Section \ref{section5}),
we need one more assumption.

\begin{itemize}
\item[\bf (A3)]\it
For every $f\in C(\T^d)$
with $\mu(f)=0$, there exists a unique multivariate $1$-periodic
 solution $\psi\in \mathscr{D}(\sL)$ to
\begin{equation}\label{a3-1-1}
\sL \psi =f   \  \hbox{ on }  \, \T^d \quad \hbox{ with} \quad \mu(\psi)=0
\end{equation}
and
\begin{equation}\label{a3-1-2}
\|\psi\|_\infty+\|\nabla \psi\|_\infty\le
C_1\|f\|_\infty,
\end{equation}
where $C_1>0$
is  independent of $f$.
\end{itemize}

There are a few literature on homogenization of non-local operators.
We refer readers to \cite{A1,A2, KPZ, PZ1, Sc1} for the periodic
homogenization results for stable-like operators or the operator
with convolution type kernels. The methods used in the these papers
are analytic and called the corrector method. The probabilistic
study of homogenization of periodic stable-like processes  can be
found in \cite{F1, HDS, San} via the characteristics of
semimartingales, in \cite{F2,  HIT, HDS0} by SDE driven by L\'evy
processes or by Poisson random measures, and in \cite{FT, Tom} via
the martingale problem method.
A closely related topic is homogenization of non-local operators or jump diffusions in random media,
which typically requires a different approach than the periodic media case,
see \cite{PZ2, Sc2} for example.
Recently we have studied homogenization of symmetric  stable-like processes in stationary ergodic random media
in \cite{CCKW1}.

 \medskip

The approach of this paper is  different from all the papers
mentioned above. We will use generator method combined with its
connection to martingales.
In particular, we summarize the novelties of our paper as follows.
\begin{itemize}

\item [(i)] Our results reveal that
  the crucial ingredients
  for the homogenization of L\'evy type operators
 are the  shape of large jumps for the jumping measures $\Pi (dz)$ and
its limiting spherical measure on the unit sphere $\bS^{d-1}$ when expressed in spherical coordinates,
 see e.g.\ conditions \eqref{e:3.3a}, \eqref{a2-2-1} and \eqref{a2-3-2}--\eqref{a2-3-1}.
Compared with the references mentioned above on the homogenization of non-local operators (which are mainly concerned on stable-like processes), our results work for
more general jump processes
with a large class of scaling factors, see the weighted function $\varphi$ in Section \ref{section3}, Example \ref{ex4-3} and Remark \ref{r6-6}.
Moreover, limiting processes of our homogenization results are also
quite general, including {\it all} stable L\'evy processes on $\R^d$ that can be
non-symmetric,
degenerate and have singular L\'evy measures,
  see Example \ref{ex4-1} below and examples in Subsection \ref{S:7.1} for more details.

\item [(ii)] We will establish the periodic homogenization results for L\'evy type operators
$\sL$ after suitable scalings that
depend only
on the tail of
$\Pi (dz) $ which gives the rate of large jumps.
Our results in particular cover the critical case; see Example \ref{ex4-1}(ii).
To the best of our knowledge, this is the first time in literature that critical cases have been studied.
Moreover,  the L\'evy measure $\Pi$ can be singular with respect to the Lebesgue measure on $\R^d$ and
its support can have zero Lebesgue measure.

 \item [(iii)] Among all the results mentioned above, the process under investigation
 is either
the unique strong solution  of a stochastic differential equation or a Feller process on $\R^d$.
In this paper,   the process is only assumed to
solve the martingale problem of  $(\sL, C_b^2(\R^d))$ and   have
strong Markov property.
\end{itemize}

\medskip

The main results of this paper are Theorems \ref{t3-1}, \ref{t3-2},
\ref{t3-3} and \ref{t3-4}
as well as Theorems \ref{T:6.5} and \ref{T:6.7}.
We use the following example, a special case of these much more general results,
for illustration.
We first introduce some notations which will be frequently used in the paper.
By assumptions on $k$ and $\Pi$ (see the line immediately after \eqref{e4-1}),
 \begin{equation}\label{e:1.2}
b_R(x):=\int_{\{1<|z|\le R\}}zk(x,z)\,\Pi (dz)
\end{equation}
is well defined for every $R>1$, and
 $b_R\in C_b (\R^d)$ is
 multivariate 1-periodic.
 Clearly,  if
$\int_{\{|z|>1\}}|z|\,\Pi (dz) <\infty$,
then
\begin{equation}\label{e:1.3}
b_\infty(x):=
\int_{\{|z|>1\}}zk(x,z)\,\Pi (dz)
\end{equation}
is also well defined
and is the limit of
$b_R(x)$ as $R\to \infty$.
Let $\R_+:=[0,\infty)$ and $\bS^{d-1}$ be the unit sphere in $\R^d$.
Denote by $\D([0, \infty); \R^d)$ the space of $\R^d$-valued right continuous functions having left limits
on $[0, \infty)$, equipped with the Skorohod topology.

\begin{example}\label{ex4-1}
Suppose that Assumptions {\bf(A1)}, {\bf(A2)} and {\bf(A3)} hold. Let
 $X:=(X_t)_{t\ge 0}$ be the
strong Markov process corresponding to the operator $\sL$ given by
\eqref{e4-1} with the jumping measure $\Pi (dz) $ such that
\begin{equation}\label{ex4-1-1}
\I_{\{|z|>1\}}\,\Pi (dz) =
\I_{\{r>1\}}\frac{1}{r^{1+\alpha}}dr\varrho_0(d\theta)
\end{equation}
 where $\alpha\in (0,\infty)$, $\varrho_0(d\theta)$ is a
 non-negative finite measure on $\bS^{d-1}$
 and $(r,\theta)$ denotes the spherical coordinates of $z\in \R^d$.

Denote by $\mu(dx)$ the stationary probability measure for the quotient process of
$X$ on $\T^d$. Define for any $R>1$,
\begin{equation}\label{e0a}
\bar
b:=\int_{\T^d} b(x)\,\mu(dx), \quad  \bar b_R:=\int_{\T^d}
b_R(x)\,\mu(dx),\quad \bar b_\infty:=\int_{\T^d} b_\infty(x)\,\mu(dx).
\end{equation}

\begin{itemize}
\item [(i)] Suppose that
$k(x, z)$ is a bounded continuous function on $\R^d\times \R^d$ so that
 $x\mapsto k(x,z)$ is
 multivariate 1-periodic for each fixed $z\in \R^d$
 and $z\mapsto k(x,z)$ is multivariate 1-periodic for each fixed $x\in \R^d$.
For any $\e\in (0,1]$, define
 $(Y_t^\e)_{t\ge 0}$ by
\begin{align*}
Y_t^\e=
\begin{cases}
\e X_{t/\e^{\alpha}},      & 0<\alpha<1,\\
\e X_{t/\e^{\alpha}} -(\bar b_{1/\e}+\bar b) t,&\alpha=1,\\
\e X_{t/\e^{\alpha}} -\e^{1-\alpha}(\bar b_\infty+\bar b)t,&1<\alpha<2.
\end{cases}
\end{align*}
Then the
process $(Y_t^\e)_{t\ge0}$ converges weakly
in  $\D ([0, \infty); \R^d)$,
as $\e \to 0$,
to a (possibly non-symmetric)
 $\alpha$-stable L\'evy process $(\bar X_t)_{t\ge 0}$
having jumping kernel
$\frac{\bar k (\theta)}{r^{1+\alpha}}\,dr\,\varrho_0(d\theta)$,
where $\bar k:\bS^{d-1}\to \R_+$ is defined by
$$
\bar k
(\theta):=\int_{\T^d}\bar k(x,\theta)\,\mu(dx),\quad  \theta\in \bS^{d-1},$$
and $\bar k: \R^d\times \bS^{d-1}\to \R_+$ satisfies that for all $x\in \R^d$ and $\theta\in \bS^{d-1}$,
\begin{equation}\label{e:notenote}
\bar k(x,\theta)=\lim_{T\to \infty}\frac{1}{T}\int_0^T k(x,(r,\theta))\,dr.
\end{equation}
The infinitesimal generator of the L\'evy process $(\bar X_t)_{t\ge 0}$ is given by
 \begin{align*}
\bar\sL f(x)=
\begin{cases}
 \displaystyle\int_{\R^d}\left(f(x+z)-f(x)\right)
 \bar k(z/|z|)
 \,\Pi_0(dz)
 &\quad  \alpha\in (0,1),\\
  \displaystyle\int_{\R^d}\left(f(x+z)-f(x)-\langle \nabla f(x),z\rangle\I_{\{|z|\le 1\}} \right)
    \bar k(z/|z|)
  \,\Pi_0(dz)
     &\quad\ \alpha=1,\\
  \displaystyle\int_{\R^d}\left(f(x+z)-f(x)-\langle \nabla f(x),z\rangle \right)
    \bar k(z/|z|)
  \,\Pi_0(dz)
  &\quad \alpha\in (1,2),
\end{cases}
\end{align*}
where
$\Pi_0(dz):=\I_{\{r>0\}}\frac{1}{r^{1+\alpha}}\,dr\,\varrho_0(d\theta)$.

Furthermore, if the finite measure  $\varrho_0$ on $\bS^{d-1}$ that does not charge on the set of rationally dependent $\theta \in \bS^{d-1}$,  then we can take
$\bar k (\theta)\equiv \int_{\T^d}\int_{\T^d}k(x,z)\,dz\,\mu(dx)$ for all $\theta\in \bS^{d-1}$,
which is a constant,
 in the statement above. Here we call $\theta =(\theta_1, \cdots, \theta_d) \in \bS^{d-1}$ is rationally dependent if there is some non-zero
$m=(m_1, \cdots, m_d) \in
\Z^d$ so that $\<m, \theta\>= \sum_{i=1}^d m_i\theta_i=0$.
Otherwise, we call $\theta$ rationally independent. When $d=1$, $\bS^{0}=\{1, -1\}$ so every its member is rationally independent.
 In particular,  if  $\varrho_0$ does not charge on singletons
 when $d=2$ and does not charge on
 subsets of $\bS^{d-1}$ that are of Hausdorff dimension $d-2$ when $d\geq 3$
 (for example, $\varrho_0$ is $\gamma$-dimensional Hausdorff measure with    $\gamma\in (d-2,d-1]$),
 then  $\varrho_0$ does not charge on  the set of rationally dependent $\theta \in \bS^{d-1}$
and so the result above holds
with
$\bar k (\theta)\equiv \int_{\T^d}\int_{\T^d}k(x,z)\,dz\,\mu(dx)$.
Moreover, if $\varrho_0$ is absolutely continuous with respect to the Lebesgue surface measure $\sigma$  on $\bS^{d-1}$ with
a bounded
 Radon-Nikodym derivative, then
we can replace the joint continuity assumption on $k(x, z)$ by the continuity of  the function $x \mapsto k(x,z)$ and condition \eqref{e0}.

\item [(ii)] When $\alpha=2$, define
$$
Y_t^\e:=\e X_{\e^{-2}|\log {\e}|^{-1}t} - \e^{-1}|\log {\e}|^{-1}(\bar b_\infty +\bar b) t,\quad t\ge0.
$$ Suppose that
\begin{equation}\label{ex4-1-3}
k_0:=\lim_{|z|\rightarrow \infty}\int_{\T^d}k(x,z)\,\mu(dx)>0.
\end{equation}
Then $(Y_t^\e)_{t\ge 0}$ converges weakly in $\D([0, \infty); \R^d)$, as $\e \to 0$, to
Brownian motion
$(B_t)_{t\ge 0}$ with the covariance matrix
$A=\{a_{ij}\}_{1\le i,j\le d}$ such that
\begin{equation}\label{ex4-1-4}
a_{ij}:=k_0\int_{\bS^{d-1}}\theta_i\theta_j\,\varrho_0(d\theta).
\end{equation}

\item [(iii)] When $\alpha>2$,    define
$$
Y_t^\e:=\e X_{t/\e^{2}} - \e^{-1}(\bar b_\infty +\bar b)t,\quad t\ge0.
$$
Then $(Y_t^\e)_{t\ge 0}$ converges weakly
in $\D([0, \infty); \R^d)$, as $\e \to 0$,
to a
$d$-dimensional
Brownian motion $(B_t)_{t\ge 0}$ with the covariance matrix
\begin{equation}\label{ex4-1-2}
A:=\int_{\T^d}\int_{\R^d}\big(z+\psi(x+z)-\psi(x)\big)\otimes
\big(z+\psi(x+z)-\psi(x)\big)k(x,z)\,\Pi (dz) \,\mu(dx).
\end{equation}
Here $\psi\in \mathscr{D}(\sL)$ is
the unique periodic solution
on $\R^d$ to the following equation
$$
\sL \psi(x)=-b_\infty(x)-b(x)+\bar b_\infty+\bar b,\quad x\in \T^d
$$
with $\mu(\psi)=0$.
\end{itemize}
\end{example}
 One sufficient condition for  {\bf(A1)}, {\bf(A2)} and {\bf(A3)} to hold in this
example is that
$k(x,z)$ is bounded between two positive constants, and that
there
is a constant $\beta\in(0,1)$ so that
$b(x)=(b_i(x))_{1\le i\le d} \in C_b^\beta(\R^d)$,
$$
\sup_{z\in \R^d}|k(x,z)-k(y,z)|\le c_0|x-y|^{\beta},\quad x,y\in \R^d
$$ for some $c_0>0$, and
\[
\I_{\{|z|\le 1\}}\,\Pi (dz) =\I_{\{|z|\le 1\}}\frac{1}{|z|^{d+\alpha_0}}\,dz
\]
for some $\alpha_0\in (1,2)$ -- see Propositions \ref{l4-1} and \ref{l4-2}.

\medskip

Motivated by the
classical central limit theorem for stable laws (see e.g. \cite[p.\
161, Theorem 3.7.2; and p.\ 164, Exercise 3.7.2]{Du}), in order to
study the limit behavior of the scaled process
$X^\e:= (\e X_{t/\e^\alpha})_{t\ge0}$,
we do not need to recenter it
when $\alpha\in(0,1)$, but do need to
recenter it when $\alpha\in [1,2)$. Moreover, the
normalizing factors of the centered terms  are different  in
the critical case $\alpha=1$ and in the case $\alpha\in (1,2)$.
 See \cite[Theorem 2.4]{JKO} for  related discussions on
 limit theorems for additive functions of a Markov chain to
stable laws. A recent paper \cite{HDS} studied the periodic
homogenization for stable-like Feller processes  under the centering
condition on the drift term $b(x)$, see \cite[Assumption (H4)]{HDS}.
We
 emphasize that such centering condition
is commonly assumed in all the quoted papers above
except \cite{F2}
 which only considers symmetric $\alpha$-stable L\'evy noises with $\alpha\in (1,2)$.
 (For the periodic
homogenization for diffusion processes  under the centering
condition on the drift term $b(x)$, the reader is referred to \cite{BLP}.)
  In some
sense, studying  homogenization problem with general drift,  as
done in  \cite{Bh, Frank1} for diffusions, requires
normalizing
the center first, which
typically needs
much more effort. Concerning the assertion (iii) in Example \ref{ex4-1}, the closely
related works
 in \cite{PZ1, San} deal with
non-local operators with convolution-type kernels, and L\'evy type
operators without
drift terms,   respectively. The
critical case corresponds to the assertion (ii). Note that in the critical case,
in contrast to the diffusive case (iii),
the  scaling factor is $\e^{-2}|\log {\e}|^{-1}$ rather than the
standard diffusive scaling $\e^{-2}$
and the corrector solution does not contribute to the diffusion coefficient of the limiting Brownian motion.
 Moreover, in this case,
  because of \eqref{ex4-1-4},
 the limiting Brownian motion may be degenerate even
under the non-degeneracy
assumption
on $\I_{\{|z|\le 1\}}\Pi(dz)$, which is different from the
diffusive scaling case (iii) (see Remark \ref{r4-1} below).
 We should mention that all the limit processes have the scaling property; however, different from the cases (ii) and (iii), the limit processes considered in (i) is
 an $\alpha$-stable L\'evy process
which can be non-symmetric and singular, as
the spherical measure $\varrho_0(d\theta) $ can be
any finite measure on $\bS^{d-1}$.

\medskip

 Finally, we emphasize that the results of our paper can be regarded as the counterpart
in the jump-diffusion setting  of the work by \cite{Bh}, which studied periodic homogenization for diffusion processes without assuming the zero averaging condition on the drift term (that is, $\bar b=0$) which was imposed in \cite{BLP}.
However, since we will treat general jump processes
with a large class of scaling factors, there are
essential differences which require new ideas. For example, it always takes the diffusion scaling in \cite{Bh},
while as mentioned above in the present paper the
scaling and the limit process
in the non-diffusive cases are determined by the asymptotic behavior of the jumping measure $\Pi (dz)$ at infinity.
The  $\alpha$-stable scaling $\eps^{-\alpha}$ with $\alpha\in (0,2)$
in Example \ref{ex4-1} is merely a special case
 (see Example \ref{ex4-3}).
Furthermore, we do not assume  the  uniform ellipticity condition on the non-local operator $\sL$ of \eqref{e4-1}
in the sense that the support of the jumping   measure
  $\I_{\{|z|>1\}}\Pi (dz)$
can have zero Lebesgue measure and whose linear span
can be a proper  linear subspace of $\R^d$.

\medskip

The remainder of this paper is organized as follows.
In the next section,
we present an elementary lemma and some properties on the
scaled processes under Assumptions {\bf(A1)} and {\bf(A2)}. Sections \ref{section3} and \ref{section4} are devoted to the
study of the
limiting
behaviors of the scaled processes
under the jump scaling and the diffusive scaling, respectively. In Section \ref{section5}, we
consider  homogenization in the critical cases.
Roughly speaking, the
limiting  process is still Brownian motion,
but the scaling factor is
different from the standard diffusive scaling $\e^{-2}$.
In Section \ref{S:6}, sufficient conditions are given for the key averaging assumption \eqref{e:3.6} of our main results to hold, which are also of independent interest.
With all the results above at hand, in Subsection \ref{S:7.1}
we give the proof of the assertions made in Example \ref{ex4-1}.
Two more examples are given to illustrate the power of our main results.
Sufficient conditions for Assumptions {\bf (A1)}-{\bf (A3)}  to hold are presented in Subsection \ref{S:7.2}.

 \medskip

In this paper, we use := as a way of definition. For two positive
functions $f$ and $g$, $f\asymp g$ means that $f/g$ is bounded between two positive constants, and $f\preceq g$ means that $f/g$ is bounded by a
positive constant.
We use $[a]$ to denote the largest integer
not exceeding $a$.
For $a, b\in \R$, $a\wedge b:= \min \{a, b\}$.
For any vector $x,y\in \R^d$, $x\otimes y$ denotes its tensor
product, which is equivalent to an $d\times d$-matrix defined by $(x\otimes
y)_{ij}=x_iy_j$ for  $1\le i,j\le d$.

\section{Preliminaries}
\subsection{Elementary lemma}
\begin{lemma}\label{l2-1}
For $\psi\in C_b^1(\R^d;\R^d)$ and
$\varepsilon>0$, set $$\Phi_{\e}(x):=x+\e\psi(x/\e ),\,\,
\Theta_{\e}(x,z):=\psi( x/\e +z )-\psi(x/\e ),\quad x\in \R^d.$$ Then, for any $f\in C_b^3(\R^d)$ and $M\in (0,\infty]$,
\begin{align*}
&f(\Phi_{\e}(x+\e z))-
f(\Phi_{\e}(x))-
\langle\nabla (f(\Phi_{\e}(\cdot)))(x), \e z\rangle\I_{\{|z|\le M\}}\\
&=f(\Phi_{\e}(x)+\e z)-f(\Phi_{\e}(x))-\langle \nabla f(\Phi_{\e}(x)),
\e z\rangle\I_{\{|z|\le M\}}\\
&\quad+\e \langle \nabla f(\Phi_{\e}(x)), \psi (x/\e +z )
-\psi(x/\e )-\nabla\psi(x/\e )\cdot z\I_{\{|z|\le M\}}\rangle\\
&\quad+\frac{\e^2}{2} \langle \nabla^2 f(\Phi_{\e}(x)), (2\Theta_{\e}(x,z)\otimes  z)\I_{\{|z|\le M\}} +
\Theta_{\e}(x,z)\otimes \Theta_{\e}(x,z)\rangle+G_{\e}(x,z),
\end{align*}
where $G_{\e}(x,z)$ satisfies
\begin{align*}
|G_{\e}(x,z)| \le &C_1\e^3\|\nabla^3 f\|_\infty
(1+\|\psi\|_\infty+\|\nabla \psi\|_\infty)^3
(|z|^2\I_{\{|z|\le R\}}+|z|\I_{\{R<|z|\le M\}}+\I_{\{|z|> M\}})\\
&+C_1\e^2\|\nabla^2 f\|_\infty \|\psi\|_\infty|z|\I_{\{R<|z|\le M\}}+C_1 \|f\|_\infty\I_{\{|z|>M\}}
\end{align*}
for all $0<R\le M\le \infty$ and some constant $C_1>0$  independent of $\e$, $M$, $R$, $x$, $z$, $f$ and $\psi$.
\end{lemma}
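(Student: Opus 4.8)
The plan is to derive the displayed identity by a direct computation in which every leading term is exhibited explicitly, so that $G_\e(x,z)$ is by construction the leftover remainder, and then to estimate that remainder in the three ranges $|z|\le R$, $R<|z|\le M$ and $|z|>M$ by Taylor's theorem. Write $y:=\Phi_\e(x)$ and $\Theta:=\Theta_\e(x,z)=\psi(x/\e+z)-\psi(x/\e)$, and note first that $\Phi_\e(x+\e z)=x+\e z+\e\psi(x/\e+z)=y+\e z+\e\Theta$, and second that the chain rule applied to $x\mapsto f(\Phi_\e(x))=f\big(x+\e\psi(x/\e)\big)$ gives $\langle\nabla(f(\Phi_\e(\cdot)))(x),\e z\rangle=\e\langle\nabla f(y),z\rangle+\e\langle\nabla f(y),\nabla\psi(x/\e)\cdot z\rangle$. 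Substituting these into the left-hand side and into the first three terms on the right-hand side of the asserted identity, the two copies of $\e\langle\nabla f(y),z\rangle\I_{\{|z|\le M\}}$ cancel, as do the two copies of $\e\langle\nabla f(y),\nabla\psi(x/\e)\cdot z\rangle\I_{\{|z|\le M\}}$, and the identity reduces to the statement that
\[
G_\e(x,z)=f(y+\e z+\e\Theta)-f(y+\e z)-\e\langle\nabla f(y),\Theta\rangle-\frac{\e^2}{2}\big\langle\nabla^2 f(y),\,2(\Theta\otimes z)\I_{\{|z|\le M\}}+\Theta\otimes\Theta\big\rangle .
\]
It then remains only to estimate this quantity, and throughout we use $|\Theta|\le\min\{2\|\psi\|_\infty,\,\|\nabla\psi\|_\infty|z|\}$, valid since $\psi\in C_b^1$.

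On $\{|z|\le M\}$, where $\I_{\{|z|\le M\}}=1$, the key point is that the correction $2(\Theta\otimes z)+\Theta\otimes\Theta$ carried by $G_\e$ is precisely what cancels all the $\nabla^2 f(y)$-terms produced by Taylor expansion: expanding $f(y+\e z+\e\Theta)$ about $y+\e z$ to third order in the increment $\e\Theta$, re-expanding $\nabla f(y+\e z)$ to first order and $\nabla^2 f(y+\e z)$ to zeroth order about $y$, and using $\langle\nabla^2 f(y),\Theta\otimes z\rangle=\langle\nabla^2 f(y)\,z,\Theta\rangle$, one obtains $G_\e=J_1+J_2+J_3$ with $J_1$ the third-order Taylor remainder of $f$ at $y+\e z$ with increment $\e\Theta$, $J_2=\e\langle\nabla f(y+\e z)-\nabla f(y)-\nabla^2 f(y)(\e z),\Theta\rangle$, and $J_3=\tfrac{\e^2}{2}\langle\nabla^2 f(y+\e z)-\nabla^2 f(y),\Theta\otimes\Theta\rangle$, whence $|J_1|\le C\e^3\|\nabla^3 f\|_\infty|\Theta|^3$, $|J_2|\le C\e^3\|\nabla^3 f\|_\infty|z|^2|\Theta|$, and $|J_3|\le C\e^3\|\nabla^3 f\|_\infty|z|\,|\Theta|^2$. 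For $|z|\le R$ one bounds each factor $|\Theta|$ by $\|\nabla\psi\|_\infty|z|$ whenever an extra power of $|z|$ is wanted and by $2\|\psi\|_\infty$ otherwise, arriving at $|G_\e(x,z)|\le C\e^3\|\nabla^3 f\|_\infty(1+\|\psi\|_\infty+\|\nabla\psi\|_\infty)^3|z|^2$. For $R<|z|\le M$ one bounds the $|\Theta|^2$-factors in $J_1$ and $J_3$ by $(2\|\psi\|_\infty)^2$, which turns those contributions into the term $C\e^3\|\nabla^3 f\|_\infty(1+\|\psi\|_\infty+\|\nabla\psi\|_\infty)^3|z|$, while $J_2$ is estimated more crudely via $|\nabla f(y+\e z)-\nabla f(y)-\nabla^2 f(y)(\e z)|\le 2\|\nabla^2 f\|_\infty\e|z|$, giving the term $C\e^2\|\nabla^2 f\|_\infty\|\psi\|_\infty|z|$.

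When $|z|>M$ we have $\I_{\{|z|\le M\}}=0$, so the correction $2(\Theta\otimes z)$ is absent from $G_\e$, the cancellation of the $\nabla^2 f(y)$-terms no longer occurs, and a naive expansion leaves $\nabla f$- and $\nabla^2 f$-terms that cannot be absorbed into $\|\nabla^3 f\|_\infty$. The remedy is to add and subtract $f(y+\e\Theta)-f(y)$: the mixed difference $f(y+\e z+\e\Theta)-f(y+\e z)-f(y+\e\Theta)+f(y)$ is bounded trivially by $4\|f\|_\infty$, while $f(y+\e\Theta)-f(y)-\e\langle\nabla f(y),\Theta\rangle-\tfrac{\e^2}{2}\langle\nabla^2 f(y),\Theta\otimes\Theta\rangle$ is an honest third-order Taylor remainder bounded by $C\e^3\|\nabla^3 f\|_\infty|\Theta|^3\le C\e^3\|\nabla^3 f\|_\infty\|\psi\|_\infty^3$; together these yield the last two terms of the claimed estimate. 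The only genuinely delicate part of the argument is the bookkeeping in the middle step: the order of the expansions (first about $y+\e z$, re-centering at $y$ only afterwards) has to be chosen so that the $\nabla^2 f(y)$-terms cancel identically, and the two bounds on $|\Theta|$ must then be distributed so as to land exactly on $|z|^2$ for $|z|\le R$ and on $|z|$ for $R<|z|\le M$ while keeping $\nabla f$ entirely out of the constants.
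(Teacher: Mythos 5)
Your proof is correct and takes a genuinely different route from the paper's. The paper splits the left-hand side from the outset into three increments $I_1^\e+I_2^\e+I_3^\e$ (an increment of $f$ in the $\e z$-direction at the base point $x+\e\psi(x/\e+z)$, a gradient difference paired with $\e z$, and an increment of $f$ in the $\e\psi$-direction), estimates each one separately, and defines the remainder implicitly as $G_\e=G_{1,\e}+G_{2,\e}+G_{3,\e}$; the first piece in particular is handled via an auxiliary function $H_1(y)=f(x+y+\e z)-f(x+y)-\langle\nabla f(x+y),\e z\rangle\I_{\{|z|\le M\}}$ and the mean value theorem. You instead identify $G_\e$ upfront by explicit cancellation of the $\e\langle\nabla f(y),z\rangle$ and $\e\langle\nabla f(y),\nabla\psi(x/\e)\cdot z\rangle$ terms as the single closed expression
$$G_\e=f(y+\e z+\e\Theta)-f(y+\e z)-\e\langle\nabla f(y),\Theta\rangle-\tfrac{\e^2}{2}\big\langle\nabla^2 f(y),\,2(\Theta\otimes z)\I_{\{|z|\le M\}}+\Theta\otimes\Theta\big\rangle,$$
and estimate this one expression by a two-base-point Taylor expansion (observing the cancellation of the Hessian terms on $\{|z|\le M\}$) together with the add-and-subtract of $f(y+\e\Theta)-f(y)$ on $\{|z|>M\}$. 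What your route buys is transparency: the identity part of the lemma becomes a one-line algebraic check, and one sees exactly which $\nabla^2 f$ corrections are already accounted for by the displayed main terms. What the paper's route buys is modularity: each $I_i^\e$ is a self-contained single-direction estimate, so the bookkeeping splits into three parallel and simpler threads at the cost of never exhibiting $G_\e$ explicitly. Both arguments distribute the two bounds $|\Theta|\le\|\nabla\psi\|_\infty|z|$ and $|\Theta|\le 2\|\psi\|_\infty$ in the same way across the three $|z|$-regimes, and both rely on the same cruder $\|\nabla^2 f\|_\infty$-bound for the $\nabla f$-difference on $R<|z|\le M$, so the quantitative content is identical.
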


\begin{proof}
For any $f\in C_b^3(\R^d)$ and $M\in(0,\infty]$, we write
\begin{align*}
&f(\Phi_{\e}(x+\e z))-
f(\Phi_{\e}(x))-
\langle\nabla (f(\Phi_{\e}(\cdot)))(x), \e z\rangle\I_{\{|z|\le M\}}\\
&=f(\Phi_{\e}(x+\e z))-
f(\Phi_{\e}(x))-
\langle\nabla f(\Phi_{\e}(x)), (\e z+\e \nabla\psi(x/\e )\cdot z)\rangle\I_{\{|z|\le M\}}\\
&=\Big[f(x+\e z+\e \psi (x/\e +z ))-f(x+\e \psi (x/\e +z ))-
\langle\nabla f(x+\e \psi (x/\e +z )), \e z\rangle\I_{\{|z|\le M\}}\Big]\\
&\quad+\Big[\langle \nabla f(x+\e \psi(x/\e +z))-
 \nabla f(x+\e \psi(x/\e )), \e z\rangle\I_{\{|z|\le M\}}\Big]\\
&\quad +\Big[f(x+\e \psi(x/\e +z))-f(x+\e \psi(x/\e ))-\langle \nabla f(x+\e \psi(x/\e )), \e\nabla \psi(x/\e )\cdot z\rangle \I_{\{|z|\le M\}}\Big] \\
&=:\sum_{i=1}^3 I_i^{\e}.
\end{align*}

First, for fixed $x,z\in \R^d$ and $\e\in (0,1)$, let
$$
H_1(y):=f(x+y+\e z)-f(x+y)-\langle \nabla f(x+y), \e z\rangle\I_{\{|z|\le M\}}.
$$
Then, $$\|H_1\|_\infty\preceq \e^2 \|\nabla^2 f\|_\infty|z|^2\I_{\{|z|\le M\}}+\|f\|_\infty\I_{\{|z|> M\}},$$ and, for any $0<R\le M$,
$$\|\nabla H_1\|_\infty \preceq \e^2\|\nabla^3 f\|_\infty  |z|^2\I_{\{|z|\le R\}}+
\e\|\nabla^2 f\|_\infty|z|\I_{\{|z|> R\}}.$$
Thus,
\begin{align*}
\quad I_1^\e &=H_1(\e\psi (x/\e +z))=H_1(\e\psi(x/\e ))+\big[H_1(\e\psi(x/\e +z))-
H_1(\e\psi(x/\e ))\big]\\
&=:f(\Phi_{\e}(x)+\e z)-f(\Phi_{\e}(x))-\langle \nabla f(\Phi_{\e}(x)),\e z\rangle\I_{\{|z|\le M\}}
+G_{1,\e}(x,z).
\end{align*}
Furthermore, according to the mean value theorem,  $G_{1,\e}(x,z)$ satisfies that
\begin{align*}
 |G_{1,\e}(x,z)|
 &\preceq \e
\|\nabla H_1\|_\infty\big|\psi(x/\e +z)-\psi(x/\e )\big| \I_{\{|z|\le M\}}+ \|H_1\|_\infty\I_{\{|z|> M\}}\\
&\preceq\e^3 \|\nabla^3 f\|_\infty\|\psi\|_\infty |z|^2\I_{\{|z|\le R\}}+ \e^2\|\nabla^2 f\|_\infty \|\psi\|_\infty |z|\I_{\{R<|z|\le M\}}+\|f\|_\infty \I_{\{|z|> M\}}.
\end{align*}

Second, by the Taylor expansion,
it holds that
\begin{align*}
I_2^\e&=\e^2 \big\langle \nabla^2 f (\Phi_{\e}(x) ), \Theta_{\e}(x,z)\otimes  z\big\rangle\I_{\{|z|\le M\}}\\
&\quad+\frac{\e^3}{2} \big\langle \nabla^3 f (\Phi_{\e}(x)+\theta_0\Theta_\e(x,z) ),
\Theta_{\e}(x,z)\otimes \Theta_\e(x,z)\otimes  z\big\rangle\I_{\{|z|\le M\}}\\
&=:\e^2 \big\langle \nabla^2 f (\Phi_{\e}(x) ), \Theta_{\e}(x,z)\otimes  z\big\rangle\I_{\{|z|\le M\}}
+G_{2,\e}(x,z),
\end{align*}
where $\theta_0\in(0,1)$ and
\begin{align*}
|G_{2,\e}(x,z)|\preceq \e^3\|\nabla^3 f\|_\infty
\big(\|\psi\|_\infty\|\nabla \psi\|_\infty|z|^2\I_{\{|z|\le R\}}+
\|\psi\|_\infty^2 |z| \I_{\{R<|z|\le M\}}\big)
\end{align*} for any $0<R\le M.$

Third,
applying the Taylor expansion again and using
the mean value theorem, we obtain
\begin{align*}
I_3^\e&=\e \langle \nabla f(\Phi_{\e}(x)), \psi(x/\e +z)
-\psi(x/\e )-\nabla\psi(x/\e )\cdot z\I_{\{|z|\le M\}}\rangle\\
&\quad +\frac{\e^2}{2} \big\langle \nabla^2 f (\Phi_{\e}(x)+\e \theta_1  \Theta_\e(x,z) ), \Theta_{\e}(x,z)\otimes \Theta_{\e}(x,z)\big\rangle\\
&=\e \langle \nabla f(\Phi_{\e}(x)), \psi(x/\e +z)
-\psi(x/\e )-\nabla\psi(x/\e )\cdot z\I_{\{|z|\le M\}}\rangle\\
&\quad + \frac{\e^2}{2} \big\langle \nabla^2 f (\Phi_{\e}(x) ), \Theta_{\e}(x,z)\otimes \Theta_{\e}(x,z)\big\rangle\\
&\quad + \frac{\e^3}{2} \theta_1 \big\langle \nabla^3 f (\Phi_{\e}(x)
+\e \theta_1\theta_2\Theta_\e(x,z) ), \Theta_{\e}(x,z)\otimes \Theta_{\e}(x,z)\otimes
\Theta_\e (x,z)\big\rangle\\
&=: \e \big\langle \nabla f (\Phi_{\e}(x) ), \psi(x/\e +z)
-\psi(x/\e )-\nabla\psi(x/\e )\cdot z\I_{\{|z|\le M\}} \big\rangle\\
&\quad+\frac{\e^2}{2} \big\langle \nabla^2 f (\Phi_{\e}(x) ), \Theta_{\e}(x,z)\otimes \Theta_{\e}(x,z)\big\rangle+
G_{3,\e}(x,z),
\end{align*}
where $\theta_1,\theta_2\in (0,1)$ and
\begin{align*}
|G_{3,\e}(x,z)| \preceq &\e^3\|\nabla^3 f\|_\infty
(\|\psi\|_\infty \|\nabla \psi\|^2_\infty|z|^2\I_{\{|z|\le R\}}+
\|\psi\|_\infty^2\|\nabla \psi\|_\infty|z| \I_{\{R<|z|\le M\}}+ \|\psi\|^3_\infty\I_{\{|z|>M\}})
\end{align*}
for any $0<R\le M$.

Therefore, putting all the estimates above together, we prove the required assertion.
\end{proof}

 \subsection{Consequences of {\bf (A1)} and {\bf(A2)}}
Let $\sL$ be the operator given by \eqref{e4-1}
with $\Pi (dz) $, $k(x, z)$ and $b(x)$ satisfying all the conditions below \eqref{e4-1}.
We assume that Assumptions {\bf (A1)} and {\bf (A2)} in Section \ref{section1}
also hold true. Denote by $X:=(X_t)_{t\ge0}$ the strong Markov
process associated with the generator $\sL $ as in {\bf(A1)}.

Let $\rho: \R_+\to \R_+$ be a strictly
increasing function such that $\lim_{r\to \infty }\rho(r)=\infty$.
For  $\e\in(0,1),$ consider the
scaled process
\begin{equation}\label{Sca}
X^{\e}:=\{ \e X_{\rho(1/\e)t}: \,  t\ge 0\}.
\end{equation}
Clearly, $X^\e$ is  a
strong Markov process on $\R^d$, and the associated generator is
given by
\begin{equation}\label{e:2.2}\begin{split}
\sL^\e
f(x)&= \rho(1/\e)
\int_{\R^d}\big(f(x+\e
z)-f(x)-\e\langle \nabla f(x), z\rangle \I_{\{|z|\le 1\}}\big)k(x/\e , z)\, \Pi (dz)
 \\
&\quad  +\e\rho(1/\e)  \langle  b (x/\e ) , \nabla f(x) \rangle.\end{split}
\end{equation}
See e.g.\ \cite[Lemma 2.1]{CCKW1}. Since the coefficients of the
generator $\sL^\e$ are
  multivariate $\e$-periodic,
the process $X^\e$
can be also
viewed as an
$\T^d$-valued process if
 $1/\e$ is an integer.

\begin{lemma}\label{l2-4}
Under Assumption ${\bf (A2)}$, we have the following two statements.
\begin{itemize}
\item[(i)]  For every
$f\in C_b(\T^d)$ with $\mu(f)=0$, any $0<s<t$ and $x\in \R^d$,
$$
\lim_{\e \to 0}\Ee_x \left[\left|\int_s^t
f\left( X_r^\e/\e\right)\,dr \right|^2\right]=0.
$$

\item[(ii)] Suppose that for some $x\in \R^d$, \begin{equation}\label{e:cond}\lim_{\e\to0}\sup_{|s-t|\le 1/[\sqrt{\rho(1/\e) }]}\Ee_x
(|X_s^\e-X_{t}^\e|\wedge1)=0.\end{equation} Then for any
bounded continuous function $F:\T^d\times \R^d \rightarrow \R$ and any $0<s<t$,
\begin{equation}\label{l2-4-1}
\lim_{\e \to 0}\Ee_x \left[\left|\int_s^t
F\left(X_r^\e/\e ,X_r^\e\right)\,dr-\int_s^t
\bar F(X_r^\e)\,dr\right|^2\right]=0,
\end{equation}
where $\bar F:\R^d\to \R$ is defined by $$\bar F(y):=\int_{\T^d}F(x,y)\,\mu(dx).$$
\end{itemize}
\end{lemma}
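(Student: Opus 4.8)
The plan is to exploit the ergodicity of the quotient process on $\T^d$ together with the near-constancy of $X^\e$ over short time windows. For part (i), the key observation is that $X^\e_r/\e = X_{\rho(1/\e)r}$ read modulo $\Z^d$, so $\int_s^t f(X^\e_r/\e)\,dr = \rho(1/\e)^{-1}\int_{\rho(1/\e)s}^{\rho(1/\e)t} f(X_u)\,du$ after the substitution $u=\rho(1/\e)r$. Thus it suffices to show that $T^{-2}\,\Ee_x\big[\,\big|\int_0^{T} f(X_u)\,du\big|^2\big]\to 0$ as $T\to\infty$ for $f\in C_b(\T^d)$ with $\mu(f)=0$ (applied to the two endpoints $\rho(1/\e)s$ and $\rho(1/\e)t$, which both tend to infinity). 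I would expand the square as a double integral over $\{0\le u\le v\le T\}$, use the Markov property at time $u$ to write $\Ee_x[f(X_u)f(X_v)] = \Ee_x[f(X_u)\,\Ee_{X_u}f(X_{v-u})]$, and then apply the exponential ergodicity bound \eqref{a1-2-1}, $|\Ee_{X_u}f(X_{v-u})| \le C_0 e^{-\lambda_1(v-u)}\|f\|_\infty$. This gives
\begin{align*}
\Ee_x\Big[\Big|\int_0^{T} f(X_u)\,du\Big|^2\Big]
&\le 2\int_0^{T}\!\!\int_u^{T} |\Ee_x[f(X_u)f(X_v)]|\,dv\,du
\le 2 C_0 \|f\|_\infty^2 \int_0^{T}\!\!\int_u^{T} e^{-\lambda_1(v-u)}\,dv\,du \\
&\le \frac{2C_0\|f\|_\infty^2}{\lambda_1}\, T,
\end{align*}
which is $o(T^2)$, proving (i).

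For part (ii), the strategy is to discretize the interval $[s,t]$ into small windows, replace $X^\e_r$ by its value at the left endpoint of each window inside $F$, reduce to an ergodic average of the form handled in (i) (with $y$ frozen), and control all the errors. Fix a large integer $N$ and set $t_j = s + j(t-s)/N$ for $j=0,\dots,N$, with mesh $\delta = (t-s)/N$. Write the difference $\int_s^t F(X^\e_r/\e, X^\e_r)\,dr - \int_s^t \bar F(X^\e_r)\,dr$ as a telescoping sum of four pieces: (a) $\sum_j \int_{t_j}^{t_{j+1}}\!\big[F(X^\e_r/\e, X^\e_r) - F(X^\e_r/\e, X^\e_{t_j})\big]\,dr$; (b) $\sum_j \int_{t_j}^{t_{j+1}}\!\big[F(X^\e_r/\e, X^\e_{t_j}) - \bar F(X^\e_{t_j})\big]\,dr$; (c) $\sum_j \int_{t_j}^{t_{j+1}}\!\big[\bar F(X^\e_{t_j}) - \bar F(X^\e_r)\big]\,dr$. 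Pieces (a) and (c): since $\bar F$ inherits boundedness and continuity (hence uniform continuity) from $F$, and $F$ is uniformly continuous on $\T^d\times K$ for $K$ a large ball (outside which we truncate using boundedness), the integrands are bounded by a modulus-of-continuity term applied to $|X^\e_r - X^\e_{t_j}|\wedge 1$; taking expectations and using hypothesis \eqref{e:cond} (note that for $N$ fixed and $\e$ small, $\delta \ge 1/[\sqrt{\rho(1/\e)}]$, so the windows are long compared to the scale in \eqref{e:cond} — here one must instead further subdivide each $[t_j,t_{j+1}]$ into sub-windows of length $1/[\sqrt{\rho(1/\e)}]$, or argue that \eqref{e:cond} plus stationarity-type bounds control $\Ee_x(|X^\e_r-X^\e_{t_j}|\wedge 1)$ uniformly for $r\in[t_j,t_{j+1}]$ by summing over such sub-windows), these pieces are made small by first choosing $N$ large and then $\e$ small.

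Piece (b) is the heart of the matter and is handled by conditioning. By the Cauchy–Schwarz inequality it suffices to bound each $\Ee_x\big[\big|\int_{t_j}^{t_{j+1}}(F(X^\e_r/\e, X^\e_{t_j}) - \bar F(X^\e_{t_j}))\,dr\big|^2\big]$. Conditioning on $\mathscr{F}_{t_j}$ (the natural filtration of $X^\e$) and using the Markov property, this equals $\Ee_x\big[\,g_\e(X^\e_{t_j})\,\big]$ where $g_\e(y) := \Ee_{y}\big[\big|\int_0^{\delta}(F(X^\e_r/\e, y') - \bar F(y'))\,dr\big|^2\big]\big|_{y'=y}$ — that is, the same second-moment quantity as in (i) but for the function $x\mapsto F(x, y') - \bar F(y')$ on $\T^d$, which has $\mu$-mean zero by definition of $\bar F$. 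The estimate from (i) gives $g_\e(y) \le C\|F\|_\infty^2 \delta / \rho(1/\e)$ uniformly in $y$ (using the substitution to rescale and the $O(T)$ bound with $T = \rho(1/\e)\delta$, noting the process started from an arbitrary point in $\T^d$ — here one uses that the ergodic bound in \eqref{a1-2-1} is uniform in the starting point). Summing over the $N$ windows gives $\sum_j \Ee_x[g_\e(X^\e_{t_j})] \le N \cdot C\|F\|_\infty^2 \delta/\rho(1/\e) = C\|F\|_\infty^2 (t-s)/\rho(1/\e) \to 0$ as $\e\to0$ for fixed $N$; combined with Cauchy–Schwarz this forces piece (b) to $0$. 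The main obstacle I anticipate is the interplay of the two limits $N\to\infty$ and $\e\to0$ in pieces (a) and (c): one must verify that hypothesis \eqref{e:cond}, which only gives control on windows of length $1/[\sqrt{\rho(1/\e)}]$, can be bootstrapped (by a triangle-inequality / sub-window summation argument, at the cost of a factor like $\delta\sqrt{\rho(1/\e)}$) to control displacement over the fixed-length windows $[t_j, t_{j+1}]$, or alternatively that one should let $N = N(\e)\to\infty$ slowly so that $\delta(\e) \asymp 1/[\sqrt{\rho(1/\e)}]$ while still having $N/\rho(1/\e)\to 0$ — a single scale of discretization tied to $\e$ that simultaneously tames all three pieces.
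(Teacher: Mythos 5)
Your proposal is essentially correct, and your final "alternative" — using a single $\e$-dependent discretization scale $\delta(\e)\asymp 1/[\sqrt{\rho(1/\e)}]$, so that $N=N(\e)=[\sqrt{\rho(1/\e)}]$ — is precisely what the paper does. The argument for (i) is identical. For (ii), the paper first reduces (by Stone--Weierstrass) to $F(x,y)=f(x)g(y)$ with $f\in C_b(\T^d)$, $\mu(f)=0$, and $g\in C_b^1(\R^d)$. This has two payoffs: first, $\bar F\equiv 0$, so your piece (c) disappears entirely; second, the global Lipschitz bound $|g(a)-g(b)|\preceq (\|g\|_\infty+\|\nabla g\|_\infty)(|a-b|\wedge 1)$ avoids the compact-truncation step you would need to extract a uniform modulus of continuity from $F$. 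After that reduction the paper defines $I_\e$ (the full integral) and $J_\e$ (with $g$ frozen at the left endpoint of each sub-window of length $(t-s)/[\sqrt{\rho(1/\e)}]$), controls $|I_\e-J_\e|$ by hypothesis \eqref{e:cond}, and controls $J_\e$ by Cauchy--Schwarz together with the bound from (i) — exactly your conditioning-on-$\mathscr{F}_{t_j}$ argument for piece (b).

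One caveat worth flagging: the first fix you floated for pieces (a) and (c) — keeping $N$ fixed and summing hypothesis \eqref{e:cond} over $\approx \delta\sqrt{\rho(1/\e)}$ sub-windows of length $1/[\sqrt{\rho(1/\e)}]$ — does not close, because that summation costs a factor of order $\delta\sqrt{\rho(1/\e)}$ against $\eta(\e)$, and \eqref{e:cond} gives no rate for $\eta(\e)\to0$. Only the second alternative (tie the mesh directly to $1/[\sqrt{\rho(1/\e)}]$) works, and with that choice one checks that piece (b)'s bound after Cauchy--Schwarz is $N^2 \cdot C\delta/\rho(1/\e) = C(t-s)/\sqrt{\rho(1/\e)}\to 0$, so all three pieces vanish simultaneously, as in the paper.
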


\begin{proof} (i)
Let $f\in C_b(\T^d)$ be such that $\mu(f)=0$. Then, for any $0<s<t$ and $x\in \R^d$,
by the Markov property and {\bf (A2)},
\begin{equation} \label{e:2.4}\begin{split}
\Ee_x \left[\left|\int_s^t f\left(\e^{-1}
 X_r^\e\right)\,dr\right|^2\right]
&= 2\Ee_x \left[\int_s^t\int_s^r
f (X_{\rho(1/\e)r} )f(X_{\rho(1/\e )u} )\,du\,dr\right] \\
&= 2\int_s^t\int_s^r \Ee_x \big[f
(X_{\rho(1/\e)u})\Ee_{X_{\rho(1/\e)u}}
 f (X_{\rho(1/\e)(r-u)} )\big]\,du\,dr \\
&\preceq  \frac{\|f\|^2_\infty }{\rho(1/\e)}\int_s^t\big(1-
e^{-\lambda_1\rho(1/\e)(r-s)}\big)\,dr \preceq \frac{\|f\|^2_\infty (t-s)}{\rho(1/\e)}.
\end{split}\end{equation}
This along with the fact that $\lim_{\e\to 0}\rho(1/\e)=\infty$ yields the first desired assertion.

(ii) By the standard approximation, it suffices to prove \eqref{l2-4-1} for
$F(x,y)=f(x)g(y)$ with $f\in C_b(\T^d)$ and $g\in C_b^1(\R^d)$.
Without loss of generality, we assume that $f\in C_b(\T^d)$ with $\mu(f)=0$.
For $\e >0$, define $s_i=s+ \frac{i(t-s)}{[\sqrt{\rho(1/\e) }]}$. Let $$
I_\e:=\Ee_x \left[\left|\int_s^t
f\left( X_r^\e/\e  \right)g(X_r^\e)\,dr\right|^2\right]
$$
and
$$
J_\e:=\Ee_x \left[\left|\sum_{i=0}^{[\sqrt{\rho(1/\e) }]-1}\int_{s_i}^{s_{i+1}} f
\left( X_r^\e/\e  \right)\,dr\cdot g(X_{s_i}^\e)\right|^2\right].
$$
 We can write
$$I_\e=\sum_{0\le i,j\le
[\sqrt{\rho(1/\e) }]-1}\int_{s_i}^{s_{i+1}}\int_{s_j}^{s_{j+1}}
\Ee_x\big[
f\left(X_r^\e/\e  \right)f\left( X_u^\e/\e  \right)g(X_r^\e)g(X_u^\e)\big]\,dr\,du$$
and
$$J_\e=\sum_{0\le i,j\le [\sqrt{\rho(1/\e) }]-1} \int_{s_i}^{s_{i+1}}\int_{s_j}^{s_{j+1}}
\Ee_x\big[
f\left( X_r^\e/\e  \right)f\left( X_u^\e/\e  \right)g(X_{s_i}^\e)g(X_{s_j}^\e)\big]\,dr\,du.$$
Note that, for every $0\le i,j\le [\sqrt{\rho(1/\e) }]-1$, $s_i\le r\le
s_{i+1}$ and $s_j\le u\le s_{j+1}$,
\begin{align*}
&\bigg|\Ee_x\big[
f\left( X_r^\e/\e  \right)f\left(X_u^\e/\e \right)g(X_r^\e)g(X_u^\e)]-\Ee_x\big[
f\left( X_r^\e/\e \right)f\left( X_u^\e/\e  \right)g(X_{s_i}^\e)g(X_{s_j}^\e)\big]\bigg|\\
& \le \|f\|_\infty^2\|g\|_\infty\left(\Ee_x
|g(X_r^\e)-g(X_{s_i}^\e)|+\Ee_x
|g(X_u^\e)-g(X_{s_j}^\e)|\right)\\
&\le 2 \|f\|_\infty^2\|g\|_\infty (\|g\|_\infty+\|\nabla g\|_\infty)
\left(\Ee_x (|X_r^\e-X_{s_i}^\e|\wedge1)+\Ee_x
(|X_u^\e-X_{s_j}^\e|\wedge1)\right)\\
&\le c_1 \eta(\e),
\end{align*} where $c_1 = 4 (\|g\|_\infty+\|\nabla g\|_\infty) \|f\|_\infty^2\|g\|_\infty$ and
$$
\eta(\e):=\sup_{|r_1-r_2|\le (t-s)/[\sqrt{\rho(1/\e) }]}\Ee_x
(|X_{r_1}^\e-X_{r_2}^\e|\wedge1).
$$
Thus,
$$|I_\e-J_\e|\le c_1 \eta(\e) \sum_{0\le i,j\le [\sqrt{\rho(1/\e) }]-1}
(s_{j+1}-s_j)(s_{i+1}-s_i)\le
c_1 \eta (\e) (t-s)^2.
$$
Hence $\lim_{\e\to0} | I_\e-J_\e|=0$. So it remains to show that $\lim_{\e\to 0} J_\e =0$.

By the Cauchy-Schwarz inequality and \eqref{e:2.4},
\begin{align*}J_\e
&\le [\sqrt{\rho(1/\e) }] \|g\|_\infty^2
\sum_{i=0}^{[\sqrt{\rho(1/\e) }]-1}\Ee_x
\left[\left|\int_{s_i}^{s_{i+1}}f\left(X_r^\e/\e  \right)\,dr\right|^2
\right]\\
&\le  \frac{c_2 [\sqrt{\rho(1/\e) }]}{\rho(1/\e) }
\|g\|_\infty^2\|f\|_\infty^2 \sum_{i=0}^{[\sqrt{\rho(1/\e) }]-1}
(s_{i+1}-s_i)\le c_2\|f\|_\infty^2\|g\|_\infty^2\frac{(t-s)}{\sqrt{\rho(1/\e) }}.
\end{align*}
Since $\lim_{\e\to 0}\rho(1/\e) =\infty$, we get $\lim_{\e\to0} J_\e=0$.
\end{proof}

\section{Homogenization:
jump scalings}\label{section3}
Let $\bS^{d-1}:=\{x\in \R^d: |x|=1\}$ be the unit sphere on $\R^d$, and
$z:=(r,\theta)\in \R_+\times \bS^{d-1}$ be the spherical coordinate of $z\in \R^d\backslash\{0\}$.
Throughout this section, we assume that the jumping measure $\Pi (dz) $ in \eqref{e4-1} has the following form on $\{|z|>1\}$:
\begin{equation}\label{e:3.3a}
\I_{\{|z|>1\}}\Pi (dz) :=\I_{\{r>1\}}\varrho(r,d\theta)\,dr=\I_{\{r>1\}}\frac{\varrho_0\,(d\theta)+\kappa(r,d\theta)}{r\varphi(r)}\,dr,
\end{equation} where
\begin{itemize}
\item[(i)]  $\varrho_0(d\theta)$ is a  e
non-negative   finite
 measure on $\bS^{d-1}$ such that $\varrho_0
(\bS^{d-1})>0$;  \item[(ii)] for every $r>1$, $\kappa(r,d\theta)$ is a finite signed measure on  $\bS^{d-1}$
so that for any $r_0>1$,
\begin{equation}\label{e:3.4}
\sup_{r\in [r_0,\infty)}
|\kappa|(r,\bS^{d-1})<\infty,\quad \lim_{r\to \infty} |\kappa|(r,\bS^{d-1}) =0,
\end{equation}
where, for each $r>1$,
 $|\kappa|(r,d\theta)$ denotes the total variational measure of $\kappa(r,d\theta)$;
\item[(iii)] $\varphi:(1,\infty)\rightarrow \R_+$ is a strictly
increasing function such that there are constants $\alpha\in (0, 2)$, $c_0\in (0, 1]$,
and $\eta_0 \in (0, \alpha \wedge |\alpha -1|\wedge (2-\alpha))$ when $\alpha \not=1$ and $\eta_0 \in (0, 1/6)$ when
$\alpha =1$,
so that for any
$1< r\le R$,
\begin{equation}\label{e:3.1}
\lim_{\lambda \to \infty}
\Big|\frac{\varphi(\lambda r)}{\varphi (\lambda)}-r^\alpha\Big|=0, \quad c_0 (R/r)^{\alpha -\eta_0}\leq
\frac{\varphi (R)}{\varphi (r)} \leq c_0^{-1} (R/r)^{\alpha +\eta_0}.
\end{equation}
\end{itemize}

Define
\begin{equation}\label{e:3.2a}
\Pi_0 (dz)
:=\I_{\{r>0\}}
\frac{\varrho_0(d\theta)\,dr}{r^{1+\alpha}},
\end{equation}
where $\alpha\in (0,2)$ is the constant in \eqref{e:3.1}.
It is obvious that $\Pi_0 (dz) $ satisfies the scaling property that
$\Pi_0(sA)=s^{-\alpha}a_0(A)$ for all $s>0$ and $A\subset \R^d\backslash\{0\}$; however, since $\varrho_0(d\theta)$ may be non-symmetric on $\bS^{d-1}$,
$\Pi_0(dz)$ can be non-symmetric on
$\R^d$.

We also suppose that for the function $k(x,z)$ in \eqref{e4-1}, there exists a bounded function $\bar k:\R^d\times\R^d \to \R_+$ such that
for any $0<r<R$
and $f:\R^d\times \R^d\times\R^d \rightarrow \R$ satisfying
\begin{equation}\label{e:3.4b}
\lim_{\e \to 0}\sup_{x\in \R^d, |z_1-z_2|\le \e}|f(x,z_1)-f(x,z_2)|=0,
\end{equation}
it holds that
\begin{equation}\label{e:3.6}
 \lim_{\e \to 0}\sup_{x\in \R^d}\Big| \int_{\{r\le |z|\le R\}}f(x,z)
k(x/\e,z/\e)\,\Pi_0 (dz) -
\int_{\{r\le |z|\le R\}}f(x,z)\bar k(x/\e,z)\,\Pi_0 (dz) \Big|=0.
\end{equation}

\begin{remark}\label{R:3.1} \rm We make some comments on the assumptions above.
In the following, $\varphi$ is a function given in \eqref{e:3.1}.

\begin{itemize}
\item[(1)] Examples of functions satisfying \eqref{e:3.1} include $\varphi (r) =r^\alpha + r^\beta$ on $(1, \infty)$ for $0<\beta \le\alpha<2$
and $\varphi (r)=r^\alpha \log (1+r)$ on $(1, \infty)$ for $0<\alpha<2$.
In fact any strictly increasing function $\varphi(r)$ on $(1,\infty)$ of the form  $\int_{\alpha_1}^{\alpha_2} r^\beta \,\nu (d\beta)$ satisfies
condition \eqref{e:3.1}, where $0<\alpha_1\leq \alpha_2<2$, and $\nu$ is a finite measure
on $[\alpha_1, \alpha_2]$ so that $\alpha_2$ is in the support of $\nu$; see Example \ref{ex4-3}
for the proof of this fact.
Observe that the second condition in \eqref{e:3.1} is equivalent to
that there is some $R_0\geq 1$ so that for all $R\ge r> R_0$,
$$
c_0 (R/r)^{\alpha -\eta_0}\leq
\frac{\varphi (R)}{\varphi (r)} \leq c_0^{-1} (R/r)^{\alpha +\eta_0}.
$$ Indeed, the statements in this section still holds if we replace
$r>1$ in \eqref{e:3.3a} by $r>R_0$ for some $R_0\ge1$, and restrict $\varphi$ defined on $(R_0,\infty)$.

\item[(2)] Condition \eqref{e:3.4} means that the term
$\kappa(r, \cdot)$ is a lower order perturbation as $r\to \infty$,
and thus, by \eqref{e:3.3a}, the jumping measure $\Pi (dz) $ is comparable
to $\frac{\varrho_0(d\theta)\,dr}{r \varphi (r)}$ for large  $|z|$.

\item[(3)] Suppose that $\varphi_1(r)$ is a strictly increasing function on $(1,\infty)$ so that
\begin{equation}\label{e:3.5}
\lim_{r\to \infty} \frac{\varphi_1 (r)}{\varphi (r)}=1.
\end{equation}
Then $\varphi_1(r)$ clearly satisfies \eqref{e:3.1}, and we can rewrite $a(z)$ as
$$
\I_{\{|z|>1\}}\Pi (dz) =\I_{\{r>1\}}\frac{\varrho_0(d\theta)+\wt \kappa(r,d\theta)}{r\varphi_1(r)}\,dr
$$
with
$$
\wt \kappa (r,d\theta)= \left( \frac{\varphi_1 (r)}{\varphi (r)}-1\right) \varrho_0 (d\theta)
+ \frac{\varphi_1 (r)}{\varphi (r)} \kappa (r,d\theta).
$$
Evidently,
$\lim_{r\to \infty}|\wt \kappa|(r,\bS^{d-1})=0$ and $\sup_{r\in [r_0,\infty)}|\wt \kappa|(r,\bS^{d-1})<\infty$ for all $r_0>1$.
In other words, the representation of $\Pi (dz) $ in
the form of \eqref{e:3.3a} is invariant among the family of strictly increasing functions $\varphi$ on $(1,\infty)$
that mutually satisfy the relation \eqref{e:3.5}.

\item[(4)] Let $k:\R^d\times \R^d\to \R_+$ be the function satisfying the assumptions below \eqref{e4-1}.
 In view of \eqref{e:3.2a}, it is easy to see that  if
for every $x\in \R^d$ and $\varrho_0$-a.e.\ $\theta \in \bS^{d-1}$, there is a constant  $\bar k(x, \theta)$ so that
\begin{equation}\label{e:3.8a}
    \lim_{T\to \infty} \frac1T \int_0^{T}  k(x, (r, \theta))  \,dr =  \bar k(x, \theta),
\end{equation}
then   \eqref{e:3.6} holds with $\bar k(x,z) :=\bar k (x,z/|z|)$.  Clearly,   condition \eqref{e:3.8a} holds, if for $\varrho_0$-a.e.\ $\theta \in \bS^{d-1}$, there exists a bounded measurable function $\bar k (\cdot, \theta):\R^d \to \R_+$ such that
 $$
 \lim_{\e \to 0}\sup_{x\in \R^d}\left|k\left(x,(r/\e,\theta)\right)-\bar k (x,\theta)\right|=0.
 $$
 It is shown in Theorem  \ref{T:6.5}  below that condition \eqref{e:3.8a} holds for every $\theta \in \bS^{d-1}$  and so
 condition \eqref{e:3.6} automatically holds
 for any finite measure $\varrho_0$ on $\bS^{d-1}$,  when
 $k(x,z)$ is bounded, continuous and multivariate 1-periodic
 in both $x$ and $z$.
See  Section \ref{S:6}
 for more sufficient conditions for \eqref{e:3.6} including the information on
$\bar k(x, \theta)$,  when $z\mapsto k(x,z)$ is multivariate 1-periodic for any fixed $x\in \R^d$.
\end{itemize}
\end{remark}

The purpose of this section is to consider the
limiting
 behavior of the
scaled process
\begin{equation}\label{e:scaled}
X^\e=(X_t^\e)_{t\ge 0}:=(\e X_{\varphi(1/\e)t})_{t\ge 0}.
\end{equation}
Note that by \eqref{e4-1} and \eqref{e:2.2}, the
generator of $X^\e$ is given by
\begin{equation}\label{e:3.7}\begin{split} \sL^\e
f(x)&= \varphi(1/\e)
\int_{\R^d}\big(f(x+\e
z)-f(x)-\e\langle \nabla f(x), z\rangle \I_{\{|z|\le 1\}}\big)k(x/\e , z)\, \Pi (dz)
 \\
&\quad  +\e\varphi(1/\e)  \langle  b (x/\e ) , \nabla f(x) \rangle.
\end{split}\end{equation}

It turns out that the
limiting behavior of $X^\e$ as $\e\to0$ depends
on
the value of $\alpha$ associated with the scaling function $\varphi$ in \eqref{e:3.1}.
We will divide this
section into two parts. One is to consider
the invariance principle
for $X^\e$ that needs no recentering, and the other that requires recentering. In some literature,
invariance principle
that requires recentering is called non-central limit theorem; see for instance \cite{F2}.

\subsection{Invariance principle
without recentering: $\alpha\in (0,1)$}
Recall that $\alpha \in (0, 2)$ is the constant in \eqref{e:3.1}. In this subsection, we will restrict ourselves to the case $\alpha\in (0,1)$. Then,
by \eqref{e:3.3a}, we have
\begin{equation}\label{e:NO}
\begin{split}
&\lim_{\e\to0} \e\varphi(\e^{-1})=0,\quad \lim_{\delta\to0}\limsup_{\e\to0}
\left( \e\varphi(\e^{-1})\int_1^{\delta/\e}\frac{1}{\varphi(r)}\,dr \right) =0, \\
&\lim_{\delta\to0}\sup_{\e\in (0,1)}
\left( \delta\e\varphi(\e^{-1})\int_1^{1/(\delta\e)}\frac{1}{\varphi(r)}\,dr \right)=0,\quad
 \lim_{\delta\to0}\sup_{\e\in(0,1)}\left(\varphi(\e^{-1})\int_{1/(\delta\e)}^\infty\frac{1}{r\varphi(r)}\,dr\right)=0,\\
&\sup_{\e\in (0,1)} \left(\e \varphi(\e^{-1})\int_1^{1/\e}\frac{1}{\varphi(r)}\,dr\right)<\infty,
\quad \sup_{\e\in (0,1)}\left(\varphi(1/\e)\int_{1/\e}^\infty\frac{1}{r\varphi(r)}\,dr\right)<\infty.
\end{split}
\end{equation}
In particular,
$$
\lim_{\delta\to0}\sup_{\e\in (0,1)}
\left(\delta^2\e^2\varphi(\e^{-1})\int_1^{1/(\delta\e)}\frac{r}{\varphi(r)}\,dr\right)
\le\lim_{\delta\to0}\sup_{\e\in (0,1)}\left(\delta\e\varphi(\e^{-1})\int_1^{1/(\delta\e)}\frac{1}{\varphi(r)}\,dr\right)=0.
$$
In fact, for any $\e,\delta\in (0,1)$,
\begin{align*}
\delta\e\varphi(\e^{-1})\int_1^{1/(\delta\e)}\frac{1}{\varphi(r)}\,dr&=
\delta \int_{\e}^{1/\delta}\frac{\varphi({1}/{\e})}{\varphi({r}/{\e})}\,dr\\
&\preceq \delta\left(\int_{\e}^1 \frac{1}{r^{\alpha+\eta_0}}\,dr+\int_1^{1/\delta}\frac{1}{r^{\alpha-\eta_0}}\,dr\right)
\preceq \delta\left(1+{\delta}^{-1-\eta_0+\alpha}\right),
\end{align*}
where we have used the second condition of \eqref{e:3.1} in the first inequality. So, by
the fact $\eta_0\in (0,\alpha\wedge (1-\alpha))$, we
obtain
$$
\lim_{\delta\to0}\sup_{\e\in (0,1)}\left(\delta\e\varphi(\e^{-1})\int_1^{1/(\delta\e)}\frac{1}{\varphi(r)}\,dr\right)=0.
$$
Other estimates in \eqref{e:NO} can be proved similarly
and we omit  the
details.

\medskip

The following is the main result of this section.

\begin{theorem}\label{t3-1}
Suppose that \eqref{e:3.3a} and \eqref{e:3.6} hold.
If \eqref{e:3.1} holds with $\alpha \in (0, 1)$,
then the scaled process $(X_t^\e)_{t\ge 0}$ of \eqref{e:scaled}
converges weakly
in $\D ([0, \infty); \R^d)$,
as $\e \to 0$, to an $\alpha$-stable L\'evy process
$\bar X:=(\bar
X_t)_{t\ge0}$ with L\'evy measure
$\bar k_0(z) \,\Pi_0 (dz) $;
that is, the generator of the
$\alpha$-stable L\'evy process $\bar X$ is given by
$$\bar \sL f(x)=
\int_{\R^d} (f(x+z)-f(x))\bar k_0(z)\,\Pi_0 (dz) .
$$
Here  $\Pi_0 (dz) $ is the  measure  defined
in
\eqref{e:3.2a}  and  $\bar k_0(z):=\int_{\T^d}\bar k(x,z)\,\mu(dx)$,
 where  $\mu$ is the unique invariant probability  measure of $X$ on $\T^d$, and
 $\bar k(x,z)$ is the function in \eqref{e:3.6}.
\end{theorem}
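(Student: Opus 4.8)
The plan is to prove weak convergence of $(X^\e_t)_{t\ge0}$ to $\bar X$ via the martingale problem: I will show that any subsequential limit of the laws of $X^\e$ in $\D([0,\infty);\R^d)$ solves the martingale problem for $(\bar\sL, C_b^3(\R^d))$, and then invoke well-posedness of that martingale problem for the $\alpha$-stable generator $\bar\sL$ (which holds since $\alpha\in(0,1)$ and the jump kernel $\bar k_0(z)\Pi_0(dz)$ has a finite first moment near the origin issue only at infinity is irrelevant—actually $\Pi_0$ integrates $|z|\wedge 1$; the drift-free form is well posed). Tightness comes first: using the generator $\sL^\e$ from \eqref{e:3.7}, the estimates collected in \eqref{e:NO}, and Lemma \ref{l2-1} (with $\psi\equiv 0$, so $\Phi_\e=\mathrm{id}$), one controls $\Ee_x[|X^\e_t-X^\e_s|\wedge 1]$ on small time intervals; the key point is that in the regime $\alpha\in(0,1)$ the rescaled drift $\e\varphi(1/\e)\langle b(x/\e),\nabla f\rangle$ and the small-jump compensator both vanish, so no recentering is needed, and \eqref{e:NO} gives the uniform modulus-of-continuity bound \eqref{e:cond} required to apply Lemma \ref{l2-4}(ii). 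With tightness in hand one passes to a subsequential limit $\Pp_\infty$ along which $X^\e \Rightarrow X^\infty$.

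The heart of the argument is the identification of the limit. Fix $f\in C_b^3(\R^d)$. For $X^\e$ with generator $\sL^\e$ one has, under $\Pp_x$, that $f(X^\e_t)-f(X^\e_0)-\int_0^t \sL^\e f(X^\e_r)\,dr$ is a martingale (from Assumption {\bf (A1)} and the scaling identity). I will split $\sL^\e f(x)$ according to jump sizes $\{|z|\le 1\}$, $\{1<|z|\le R/\e\}$ realized as $\{\e<|w|\le R\}$ after substituting $w=\e z$, and $\{|z|>R/\e\}$. For the large-jump piece $\{|w|>R\}$, using \eqref{e:3.3a}–\eqref{e:3.4} the contribution is $O(\varphi(1/\e)\int_{R/\e}^\infty \frac{dr}{r\varphi(r)})$ which by \eqref{e:NO} is uniformly small in $\e$ and tends to $0$ as $R\to\infty$. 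For the intermediate piece $\{\e<|w|\le R\}$, one rewrites it as $\int_{\{\e<|w|\le R\}}(f(x+w)-f(x))k(x/\e, w/\e)\,\frac{\varrho_0(dw/|w|)+\kappa(|w|/\e,\cdot)}{|w|\,\varphi(|w|/\e)/\varphi(1/\e)\cdot(1/\e)}$; the scaling \eqref{e:3.1} says $\varphi(|w|/\e)/\varphi(1/\e)\to|w|^\alpha$, so this density converges to $(f(x+w)-f(x))k(\cdot)\frac{\varrho_0(d\theta)\,dr}{r^{1+\alpha}}$, i.e. integration against $\bar k(x/\e,w)\Pi_0(dw)$ after applying the averaging hypothesis \eqref{e:3.6} (whose continuity requirement \eqref{e:3.4b} is met because $w\mapsto f(x+w)-f(x)$ is Lipschitz). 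The small-jump piece $\{|z|\le1\}$, i.e. $\{|w|\le\e\}$, together with the compensator term is $O(\e^2\varphi(1/\e)\int_1^{1/\e}\frac{r\,dr}{\varphi(r)})\cdot\|\nabla^2 f\|_\infty$ plus the $O(\e\varphi(1/\e))$ drift, both $\to 0$ by \eqref{e:NO}. Combining: $\sL^\e f(x)$ is uniformly close, for $R$ large and $\e$ small, to $\int_{\{r\le|w|\le R\}}(f(x+w)-f(x))\bar k(x/\e,w)\Pi_0(dw)$ with a small-$r$ error controlled by $\|\nabla f\|_\infty \int_0^r \frac{dr'}{r'^{\alpha}}\cdot\varrho_0(\bS^{d-1})\to 0$ as $r\to0$ since $\alpha<1$.

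Next I would average out the fast variable. Lemma \ref{l2-4}(ii), applied with $F(x,y):=\int_{\{r\le|w|\le R\}}(f(y+w)-f(y))\bar k(x,w)\Pi_0(dw)$ (which is bounded and continuous in $(x,y)$), gives
$$
\lim_{\e\to0}\Ee_x\Big[\Big|\int_s^t \sL^\e f(X^\e_r)\,dr - \int_s^t \bar\sL_R f(X^\e_r)\,dr\Big|^2\Big]=0,
$$
where $\bar\sL_R f(y)=\int_{\{r\le|w|\le R\}}(f(y+w)-f(y))\bar k_0(w)\Pi_0(dw)$ and $\bar k_0(w)=\int_{\T^d}\bar k(x,w)\mu(dx)$. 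Letting first $\e\to0$ (using the martingale property of $X^\e$ plus this $L^2$-convergence, tested against bounded continuous functionals of the path up to time $s$), then $R\to\infty$ and the small-$r$ cutoff to $0$, one concludes that for every $f\in C_b^3(\R^d)$,
$$
f(X^\infty_t)-f(X^\infty_0)-\int_0^t \bar\sL f(X^\infty_r)\,dr
$$
is a $\Pp_\infty$-martingale, i.e. $\Pp_\infty$ solves the martingale problem for $\bar\sL$. Since this martingale problem is well posed (the $\alpha$-stable L\'evy generator with $\alpha\in(0,1)$), all subsequential limits coincide and $X^\e\Rightarrow\bar X$. The main obstacle I anticipate is the uniform (in $x$ and $\e$) control of the intermediate-jump integral: one must interchange the $\e\to0$ limit with the $w$-integral over $\{r\le|w|\le R\}$, which requires combining the locally uniform convergence $\varphi(\lambda r)/\varphi(\lambda)\to r^\alpha$ with the two-sided bound in \eqref{e:3.1} to get a dominating function, handling the signed perturbation $\kappa$ via \eqref{e:3.4}, and then feeding everything through \eqref{e:3.6}; getting all the error terms uniform in $x\in\R^d$ (so that Lemma \ref{l2-4}(ii) can be applied afterward) is the delicate bookkeeping step.
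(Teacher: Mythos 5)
Your proposal is correct and follows essentially the same strategy as the paper's proof: tightness via generator estimates and \eqref{e:NO}, then a three-scale decomposition of $\sL^\e f$ (small/intermediate/large jumps) combined with the averaging hypothesis \eqref{e:3.6} and Lemma~\ref{l2-4}(ii) to identify the subsequential limit as the unique solution of the martingale problem for $\bar\sL$, exactly as in the paper's Lemma~\ref{l2-2} and Steps (2)--(3) of its proof of Theorem~\ref{t3-1}. The invocation of Lemma~\ref{l2-1} with $\psi\equiv 0$ is superfluous (the paper does not use it here since no corrector is needed) and the cutoff levels in the decomposition are placed slightly differently, but these are cosmetic variations on the same argument.
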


To prove this theorem,  we need the following property for the
generator of the scaled process $X^\e$.

\begin{lemma}\label{l2-2}
Suppose that \eqref{e:3.3a} and \eqref{e:3.6} hold, and that $0<\alpha <1$. For
   every $f\in C_b^2(\R^d)$,
$$
\lim_{\e \to 0}\sup_{x\in \R^d}| \sL^\e f(x)-\bar \sL^\e f(x)|=0,
$$where
\begin{equation}\label{l2-2-1a}
\bar \sL^\e f(x):=\int_{\R^d}\big(f(x+z)-f(x)\big)\bar k (x/\e,z) \,\Pi_0 (dz)
\end{equation}
with $\bar k(x,z)$ being the function in \eqref{e:3.6} and $\Pi_0 (dz) $ defined by \eqref{e:3.2a}.
\end{lemma}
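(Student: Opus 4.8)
The plan is to reduce everything to the structural hypotheses \eqref{e:3.3a}, \eqref{e:3.1}, \eqref{e:3.6} and to the scaling estimates collected in \eqref{e:NO}. First I would change variables $w=\e z$ (i.e.\ $s=\e r$, with the angle $\theta$ unchanged) in the large-jump part of $\sL^\e f$. Writing $z=(r,\theta)$ and using \eqref{e:3.3a}, the contribution of $\{|z|>1\}$ to $\sL^\e f(x)$ equals
$$
\int_{\bS^{d-1}}\int_\e^\infty \bigl(f(x+w)-f(x)\bigr)\,k(x/\e,w/\e)\,\frac{\varphi(1/\e)}{\varphi(s/\e)}\,\frac{\varrho_0(d\theta)+\kappa(s/\e,d\theta)}{s}\,ds,\qquad w=(s,\theta),
$$
while the remaining parts of $\sL^\e f(x)$ are the compensated small-jump integral over $\{|z|\le1\}$ and the drift $\e\varphi(1/\e)\langle b(x/\e),\nabla f(x)\rangle$. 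I would then fix cut-offs $0<\delta<1<R<\infty$ and split both this integral and $\bar\sL^\e f(x)=\int_{\R^d}(f(x+w)-f(x))\bar k(x/\e,w)\,\Pi_0(dw)$ over the three shells $\{|w|\le\delta\}$, $\{\delta<|w|\le R\}$ and $\{|w|>R\}$; the target is that for fixed $\delta,R$ the middle-shell parts of $\sL^\e f$ and $\bar\sL^\e f$ become uniformly close as $\e\to0$, while all other pieces are uniformly small once $\delta$ is small and $R$ large.

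For the easy pieces: the small-jump integral is, by Taylor's theorem, bounded by $\tfrac12\e^2\varphi(1/\e)\|\nabla^2f\|_\infty\|k\|_\infty\int_{\{|z|\le1\}}|z|^2\,\Pi(dz)$, which $\to0$ uniformly in $x$ because $\e^2\varphi(1/\e)\preceq\e^{2-\alpha-\eta_0}\to0$ by the upper bound in \eqref{e:3.1} and $\eta_0<2-\alpha$; the drift term is bounded by $\e\varphi(1/\e)\|b\|_\infty\|\nabla f\|_\infty\to0$ by the first line of \eqref{e:NO}. On the inner shell, the $\bar\sL^\e f$-part is at most $\|\nabla f\|_\infty\|\bar k\|_\infty\int_{\{|w|\le\delta\}}|w|\,\Pi_0(dw)=c\,\delta^{1-\alpha}$ (finite since $\alpha<1$), and, using $|f(x+\e z)-f(x)|\le\|\nabla f\|_\infty\e|z|$ together with \eqref{e:3.3a} and \eqref{e:3.4}, the corresponding part of $\sL^\e f$ (the integral over $\{1<|z|\le\delta/\e\}$) is at most $c\,\e\varphi(1/\e)\bigl(1+\int_1^{\delta/\e}\varphi(r)^{-1}\,dr\bigr)$, whose $\limsup_{\e\to0}$ tends to $0$ as $\delta\to0$ by the first line of \eqref{e:NO}. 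On the outer shell, the $\bar\sL^\e f$-part is at most $2\|f\|_\infty\|\bar k\|_\infty\int_{\{|w|>R\}}\Pi_0(dw)=c\,R^{-\alpha}$, and the $\sL^\e f$-part (the integral over $\{|z|>R/\e\}$) is at most $2\|f\|_\infty\|k\|_\infty\varphi(1/\e)\int_{R/\e}^\infty (r\varphi(r))^{-1}\,dr$, which by the fourth displayed estimate in \eqref{e:NO} (with $\delta$ there equal to $1/R$) tends to $0$ as $R\to\infty$, uniformly in $\e$ and $x$.

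The heart of the argument is the middle shell, where \eqref{e:3.1} and \eqref{e:3.6} enter. In the displayed integral restricted to $s\in(\delta,R]$ I would first discard the $\kappa$-contribution: for $\e<\delta$, the two-sided bound in \eqref{e:3.1} gives $\sup_{s\in[\delta,R]}\varphi(1/\e)/\varphi(s/\e)\le C_\delta<\infty$, so this contribution is at most $2\|f\|_\infty\|k\|_\infty C_\delta\log(R/\delta)\sup_{r\ge\delta/\e}|\kappa|(r,\bS^{d-1})\to0$ by \eqref{e:3.4}. Next, since $\varphi$ is increasing, the map $s\mapsto\varphi(s/\e)/\varphi(1/\e)$ is nondecreasing on $[\delta,R]$ and, by the first part of \eqref{e:3.1}, converges pointwise to the continuous function $s\mapsto s^\alpha$; hence (a standard fact about monotone sequences with continuous limit on a compact interval) the convergence is uniform on $[\delta,R]$, and so is the convergence of $\varphi(1/\e)/\varphi(s/\e)$ to $s^{-\alpha}$. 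This lets me replace $\varphi(1/\e)\varphi(s/\e)^{-1}s^{-1}$ by $s^{-1-\alpha}$ with an error $\le c\log(R/\delta)\sup_{s\in[\delta,R]}|\varphi(1/\e)/\varphi(s/\e)-s^{-\alpha}|\to0$, turning the middle-shell part of $\sL^\e f(x)$ into $\int_{\{\delta<|w|\le R\}}(f(x+w)-f(x))\,k(x/\e,w/\e)\,\Pi_0(dw)$ up to a uniformly vanishing error. Finally, applying \eqref{e:3.6} with inner radius $\delta$, outer radius $R$, and test function $g(x,z):=f(x+z)-f(x)$ --- which satisfies \eqref{e:3.4b} since $|g(x,z_1)-g(x,z_2)|\le\|\nabla f\|_\infty|z_1-z_2|$ --- replaces this integral by $\int_{\{\delta<|w|\le R\}}(f(x+w)-f(x))\,\bar k(x/\e,w)\,\Pi_0(dw)$, again with an error that is $o(1)$ uniformly in $x$; and that integral is exactly the middle-shell part of $\bar\sL^\e f(x)$.

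Putting the pieces together, for every fixed $0<\delta<1<R<\infty$,
$$
\limsup_{\e\to0}\ \sup_{x\in\R^d}\bigl|\sL^\e f(x)-\bar\sL^\e f(x)\bigr|\ \le\ c\bigl(\delta^{1-\alpha}+R^{-\alpha}\bigr)+c\,\limsup_{\e\to0}\Bigl(\e\varphi(1/\e)\int_1^{\delta/\e}\varphi(r)^{-1}\,dr\Bigr),
$$
and letting $\delta\downarrow0$, $R\uparrow\infty$ and invoking \eqref{e:NO} gives the claim. I do not anticipate a genuine obstacle: the computation is essentially bookkeeping, and the only two points needing care are (a) that every error estimate is uniform in $x$ --- which is precisely why \eqref{e:3.6} and \eqref{e:NO} are formulated with suprema over $x$ and $\e$ --- and (b) the upgrade of the pointwise regular-variation limit in \eqref{e:3.1} to uniform convergence on compact subsets of $(0,\infty)$, handled by the monotonicity of $\varphi$ (a P\'olya--Dini type argument). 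The hypothesis $\alpha\in(0,1)$ enters only to make $\int_{\{|w|\le1\}}|w|\,\Pi_0(dw)<\infty$ (so $\bar\sL^\e f$ needs no compensator) and to give $\e\varphi(1/\e)\to0$; it plays no deeper role.
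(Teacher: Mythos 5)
Your proof is correct and follows essentially the same plan as the paper's: a three-shell decomposition of both $\sL^\e f$ and $\bar\sL^\e f$, Taylor estimates plus the bounds collected in \eqref{e:NO} to kill the inner and outer shells and the drift, a regular-variation argument to replace $\varphi(1/\e)/(r\varphi(r/\e))$ by $r^{-1-\alpha}$ on the middle shell, and \eqref{e:3.6} applied to $g(x,z):=f(x+z)-f(x)$ to finish. Two minor organizational differences from the paper, neither of which affects correctness: the paper keeps the inner-shell integrand compensated over the full range $\{|z|\le\delta/\e\}$ and bundles the resulting drift correction into the term $b_{\delta/\e}$, whereas you leave the compensator fixed at $\{|z|\le 1\}$ and estimate $\{1<|z|\le\delta/\e\}$ uncompensated; and the paper justifies the $\varphi$-ratio replacement with a brief appeal to dominated convergence, whereas you make the convergence uniform on $[\delta,R]$ via the P\'olya--Dini monotonicity argument (a nice clean point, though not strictly necessary since the ratio is dominated uniformly in $\e$ by a bounded function on the compact shell and DCT suffices). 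One small inaccuracy in your closing display: the outer-shell contribution of $\sL^\e f$ is $O(R^{-\alpha+\eta_0})$ uniformly in $\e$ (not $O(R^{-\alpha})$), but that still vanishes as $R\to\infty$ and you state the correct ingredient in the preceding paragraph, so no harm done.
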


\begin{proof} By \eqref{e:3.7}, for every $\e, \delta\in (0,1)$ and $f\in C_b^2(\R^d)$,
\begin{align*}
 \sL^\e f(x)=&\varphi(1/\e)\int_{\{|z|\le {\delta}/{\e}\}}
\big(f(x+\e z)-f(x)-\langle \nabla f(x), \e z\rangle\big)k(x/\e , z)\,\Pi (dz) \\
&+\varphi(1/\e)\int_{\{{\delta}/{\e}<|z|< {1}/({\delta\e})\}}
\big(f(x+\e z)-f(x)\big)k(x/\e , z)\, \Pi (dz) \\
& +\varphi(1/\e)\int_{\{|z|\ge {1}/({\delta\e})\}}
\big(f(x+\e z)-f(x)\big)k(x/\e , z)\, \Pi (dz) +\e\varphi(1/\e)\left\langle \nabla f(x), b_{ \delta/\e }(x/\e )
+b(x/\e )\right\rangle\\
=&:\sum_{i=1}^4 \sL_{i}^{\e,\delta}f(x),
\end{align*}
where $b_{\delta/\e}(x)$ is defined by \eqref{e:1.2}.
We can write
\begin{align*}
\bar  \sL^\e f(x)=&\int_{\{|z|\le \delta\}}
\big(f(x+z)-f(x)\big)\bar k(x/\e ,z)\,\Pi_0 (dz) \\
&+\int_{\{\delta<|z|< 1/\delta \}}
\big(f(x+z)-f(x)\big)\bar k(x/\e ,z)\,\Pi_0 (dz) +\int_{\{|z|\ge 1/\delta \}}
\big(f(x+z)-f(x)\big)\bar k(x/\e ,z)\,\Pi_0 (dz) \\
=&:\sum_{i=1}^3 \bar \sL_{i}^{\e,\delta}f(x).
\end{align*}

Since $\bar k(x,z)$ is bounded and $\alpha\in (0,1)$, by \eqref{e:3.2a} we have
\begin{align*}
|\bar \sL_{1}^{\e,\delta}f(x)|&\preceq \|\nabla f\|_\infty\int_{\{|z|\le \delta\}}|z|\,\Pi_0 (dz)
\preceq \|\nabla f\|_\infty\int_0^\delta \frac{\varrho_0(\bS^{d-1})}{r^\alpha}\,dr\preceq \|\nabla f\|_\infty\delta^{1-\alpha}.
\end{align*}
Applying the same argument to $|\bar \sL_{3}^{\e,\delta}f(x)|$, we see that
$$
\lim_{\delta \to 0} \sup_{\e\in (0,1], x\in \R^d}\big(|\bar \sL_{1}^{\e,\delta}f(x)|
+|\bar \sL_{3}^{\e,\delta}f(x)|\big)=0.$$

On the other hand, according to \eqref{e:3.3a},
\begin{equation}\label{e:3.8}
\I_{\{|z|\ge 1\}}\,\Pi (dz) \le\frac{\varrho_0(d\theta)+|\kappa|(r,d\theta)}{r\varphi(r)}\I_{\{r\ge 1\}}\,dr,
\end{equation}
and so by \eqref{e:3.4}  we obtain that for $0<\e <\delta<1$,
\begin{align*}
\sup_{x\in \R^d}|
\sL_1^{\e,\delta}f(x)|
&\preceq \|\nabla^2 f\|_\infty\e^2\varphi(1/\e)\int_{\{|z|\le {\delta}/{\e}\}}|z|^2\,\Pi (dz) \\
&\preceq \|\nabla^2 f\|_\infty  \e^2\varphi(1/\e)
\left(
\int_{\{|z|\le  1\}}|z|^2\,\Pi (dz)
+\int_{1}^{{\delta}/{\e}}\frac{r}{\varphi(r)}\,dr\right)\\
&\preceq \|\nabla^2 f\|_\infty
\left( \e^2\varphi(1/\e)
\int_{\{|z|\le  1\}}|z|^2\,\Pi (dz)
+ \delta\e\varphi(1/\e)\int_{1}^{{\delta}/{\e}}\frac{1}{\varphi(r)}\,dr\right),
\end{align*}
\begin{align*}
\sup_{x\in \R^d}|\sL_3^{\e,\delta}f(x)|
& \preceq
\|f\|_\infty \varphi(1/\e)\int_{\{|z|\ge1/({\delta \e})\}}\,\Pi (dz)
\preceq \|f\|_\infty \varphi(1/\e)\int_{1/({\delta \e})}^\infty \frac{1}{r\varphi(r)}\,dr,
\end{align*}
and
$$
\sup_{x\in \R^d}\left|b_{ \delta/\e }(x/\e )\right|\preceq
 \int_{1}^{{\delta}/{\e}}\frac{1}{\varphi(r)}\,dr.
$$
Thus, by \eqref{e:NO} and
the fact that
$b(x)$ is bounded,
 we have
$$
\lim_{\delta \to 0}\limsup_{\e\to0}\sup_{x\in
\R^d}\big(|\sL_{1}^{\e,\delta}f(x)|+|\sL_{3}^{\e,\delta}f(x)|
+|\sL_4^{\e,\delta} f(x)|\big)=0.
$$

Furthermore, due to  \eqref{e:3.3a},
\begin{align*}
\sL_{2}^{\e,\delta}f(x) =&
\varphi(1/\e)
\int_{\{\delta<|z|<1/\delta\}}
\big(f(x+z)-f(x) \big)k( x/\e,{z/\e} )\,
a(d(z/\e))\\
=& \varphi(1/\e)
\int_{\delta}^{1/\delta}\int_{\bS^{d-1}}
\big(f(x+z)-f(x) \big)k( x/\e,{z/\e} )
\frac{\varrho_0(d\theta)+\kappa(r/\e,d\theta)}{r/\e \varphi(r/\e)}\,d(r/\e)\\
 =&\int_{\delta}^{1/\delta}\int_{\bS^{d-1}}
\big(f(x+z)-f(x) \big)k( x/\e ,{z/\e} )
\frac{\varphi(1/\e)}{r\varphi(r/\e)}\,{\varrho_0(d\theta)}\,dr\\
 &+\int_{\delta}^{1/\delta}\int_{\bS^{d-1}}
\big(f(x+z)-f(x) \big)k( x/\e ,{z/\e} )
 \frac{\varphi(1/\e)}{r\varphi(r/\e)}\,{\kappa(r/\e,d\theta)}\,dr\\
=&:\sL_{2,1}^{\e,\delta}f(x)+\sL_{2,2}^{\e,\delta}f(x).
\end{align*}
By \eqref{e:3.4} and \eqref{e:3.1}
as well as the dominated convergence theorem, we know that for every fixed $\delta\in (0,1)$,
$$
\lim_{\e \to 0}\sup_{x\in \R^d}
|\sL_{2,2}^{\e,\delta}f(x)|=0.
$$
Again by (the first condition in) \eqref{e:3.1}, \eqref{e:3.2a} and \eqref{e:3.6} as well as the dominated convergence theorem,
one can verify that for every fixed $\delta\in (0,1)$,
\begin{equation}\label{referen-1}
\lim_{\e \to 0}\sup_{x\in \R^d}
|\sL_{2,1}^{\e,\delta}f(x)-\bar \sL_{2}^{\e,\delta}f(x)|=0.
\end{equation}

Putting all the estimates together, and letting $\e \to
0$ and then $\delta \to 0$, we get the desired assertion.
\end{proof}

\begin{proof} [Proof of Theorem $\ref{t3-1}$]
(1) Recall that $\sL^\e$ is the infinitesimal generator for the Markov process
$X^\e:=\big((X_t^\e)_{t\ge 0}; (\Pp_x)_{x\in \R^d}\big)$.
For every $x\in \R^d$, $t>0$, $f \in C_b^2(\R^d)$ and
stopping time $\tau$,
\begin{equation}\label{t3-1-1a}
\Ee_x f(X_{t\wedge \tau}^\e) =f(x)+\Ee_x\left[\int_0^{t\wedge \tau}
\sL^\e f(X_s^\e)\,ds\right].
\end{equation}
For any $R>1$, we write
\begin{align*}
\sL^\e f(x)=&\varphi(1/\e)\int_{\{|z|\le {R}/{\e}\}}
\big(f(x+\e z)-f(x)-\langle\nabla f(x), \e z \rangle\big)k(x/\e , z)\,\Pi (dz) \\
&+\varphi(1/\e)\int_{\{|z|> {R}/{\e}\}}
\big(f(x+\e z)-f(x)\Big)k(x/\e , z)\,\Pi (dz) \\
&+\e\varphi(1/\e)\Big\langle \nabla f(x),
b_{R/\e} (x/\e )+b(x/\e )
\Big\rangle\\
=&:\sum_{i=1}^3 I_i^{\e,R}(x),
\end{align*}
where $b_{R/\e}(x)$ is defined by \eqref{e:1.2}. Using \eqref{e:3.8} and following the proof of Lemma \ref{l2-2}, we have
\begin{align*}
&\sup_{x\in \R^d}|I_1^{\e,R}(x)|\preceq \|\nabla^2 f\|_\infty\e^2 \varphi(1/\e)
\left(\int_{\{|z|\le 1\}}|z|^2 \,\Pi (dz)
+\int_1^{{R}/{\e}}\frac{r}{\varphi(r)}\,dr\right),\\
 &\sup_{x\in \R^d}|I_2^{\e,R}(x)|\preceq \|f\|_\infty\varphi(1/\e)
\int_{{R}/{\e}}^\infty\frac{1}{r\varphi(r)}\,dr,\\
& \sup_{x\in \R^d}|I_3^{\e,R}(x)|\preceq \|\nabla f\|_\infty\e
\varphi(1/\e) \left(
\|b\|_\infty
+\int_1^{{R}/{\e}}\frac{1}{\varphi(r)}\,dr\right).
\end{align*}
Hence, for every $R>1$,
\begin{equation}\label{t3-1-2}\begin{split}
 \sup_{x\in \R^d}|\sL^\e f(x)|
\preceq
& \|\nabla^2 f\|_\infty\e^2 \varphi(1/\e)
\left(1+\int_1^{{R}/{\e}}
\frac{r}{\varphi(r)}\,dr\right)\\
& +\|f\|_\infty\varphi(1/\e)
\int_{{R}/{\e}}^\infty\frac{1}{r\varphi(r)}\,dr +\|\nabla f\|_\infty\e
\varphi(1/\e)\left(1+\int_{1}^{{R}/{\e}}
\frac{1}{\varphi(r)}\,dr\right).\end{split}
\end{equation}

(2) In the following, for every $l>0$, let $f_l\in C_b^3(\R^d)$ be
such that
\begin{equation}\label{e:funcOO}f_l(x)=\begin{cases} 0&\quad |x|\le {l}/{2},\\
1&\quad |x|\ge l,\end{cases}\end{equation} and
$\|\nabla^{i} f_l\|_\infty\preceq
 l^{-i}$ for $0\le i\le 3$.
For any fixed $y\in \R^d$, we set $f_l^y(x):=f_l(x-y)$. Then, according to \eqref{t3-1-2}, we have
\begin{align*}
\sup_{x,y\in \R^d}|\sL^\e f_R^y(x)|\preceq&
 R^{-2}\e^2 \varphi(1/\e)\left(1+\int_1^{{R}/{\e}}
 \frac{r}{\varphi(r)}\,dr\right)
+\varphi(1/\e)
\int_{{R}/{\e}}^\infty\frac{1}{r\varphi(r)}\,dr\\
&+R^{-1}\e \varphi(1/\e)\left(1+\int_{1}^{{R}/{\e}}
\frac{1}{\varphi(r)}\,dr\right).
\end{align*}
This along with \eqref{t3-1-1a} yields that for any $T>0$,
\begin{align*}
 \Pp_0 \left(\sup_{t\in [0,T]}|X_t^\e|>R \right) &\preceq
\Ee [f_R(X^\e_{T\wedge \tau_R^\e})]=\Ee \left[\int_{0}^{T\wedge \tau_R^\e}\sL^\e f_R(X_s^\e)\,ds\right]\\
&\preceq T \Bigg[R^{-2}\e^2
\varphi(1/\e)\left(1+\int_1^{{R}/{\e}}\frac{r}{\varphi(r)}\,dr\right)
 +\varphi(1/\e)
\int_{{R}/{\e}}^\infty\frac{1}{r\varphi(r)}\,dr\\
&\qquad +R^{-1}\e
\varphi(1/\e)\left(1+\int_{1}^ {{R}/{\e}}
\frac{1}{\varphi(r)}\,dr\right)\Bigg].
\end{align*}
Here and in what follows,  $\tau_l^\e :=\inf\{t>0:
|X_t^\e-X_0^\e|>l\}$.
Hence, according to \eqref{e:NO}, we have
\begin{equation}\label{t3-1-3}
\lim_{R \to \infty}\sup_{\e\in (0,1)}\Pp_0 \Big(\sup_{t\in
[0,T]}|X_t^\e|>R \Big) =0.
\end{equation}
On the other hand, following the argument
in \eqref{t3-1-2},
we can obtain that for every $\theta\in (0,1)$ and $y\in \R^d$,
\begin{align*}
\sup_{x\in \R^d}|\sL^\e f_\theta^y(x)| \preceq &\|\nabla^2
f_\theta^y\|_\infty
\e^2\varphi(1/\e)\left(1+\int_{1}^{{\theta}/{\e}}\frac{r}{\varphi(r)}\,dr\right)+
\|f_\theta^y\|_\infty\varphi(1/\e)\int_{{\theta}/{\e}}^\infty\frac{1}{r\varphi(r)}\,dr\\
&+\|\nabla
f_\theta^y\|_\infty\e\varphi(1/\e)\left(1+\int_{1}^{{\theta}/{\e}}\frac{1}{\varphi(r)}\,dr\right)\\
\preceq & \theta^{-2}\e \varphi(1/\e)+\varphi(1/\e)\int_{{\theta}/{\e}}^\infty\frac{1}{r\varphi(r)}\,dr
+ \theta^{-1}\e\varphi(1/\e)\int_{1}^{{\theta}/{\e}}\frac{1}{\varphi(r)}\,dr.
\end{align*}
It follows from \eqref{e:NO} that for
every $\theta\in (0,1)$,
$$
\sup_{\e\in (0,1)}\sup_{x,y\in \R^d}|\sL^\e f_\theta^y(x)|\le
C(\theta)<\infty.
$$
Therefore, for any stopping time $\tau$ with $\tau\le T$ and any
positive constant $\delta(\e)$ with $\lim_{\e \to 0}\delta(\e)=0$,
\begin{align*}
\Pp_0\big(|X_{\tau+\delta(\e)}^\e-X_{\tau}^\e|>\theta\big)&=
\Ee_0 \big[\Pp_{X_\tau^\e}\big(|X_{\delta(\e)}^\e-X_0^\e|>\theta\big)\big]\le \Ee_0 \big(\Ee_{X_{\tau}^\e}\
f_\theta\big(X_{\tau_\theta^\e\wedge
\delta(\e)}^\e\big) \big)\\
&=\Ee_0 \left[\Ee_{X_{\tau}^\e}\left(\int_0^{\tau_\theta^\e\wedge
\delta(\e)}\sL^\e f_\theta(X_s^\e)\,ds\right)\right] \le
C(\theta)\delta(\e),
\end{align*}
which implies
\begin{equation}\label{t3-1-4}
\lim_{\e \to 0}\Pp_0\big(|X_{\tau+\delta(\e)}^\e-X_{\tau}^\e|>\theta\big)=0.
\end{equation}
Due to \eqref{t3-1-3} and \eqref{t3-1-4} (see e.g.\ \cite[Theorem
1]{A}), we conclude that $\{X^\e\}_{\{\e\in(0,1]\}}$ is
tight
in $\D([0, \infty); \R^d)$.

(3) By (2), for any sequence $\{X^{\e_n}\}_{n\ge1}$
with $\lim_{n \rightarrow \infty}\e_n=0$, there is a subsequence
$\{X^{\e_{n_k}}\}_{k\ge1}$ (which we still
denote by
$\{X^{\e_n}\}_{n\ge1}$ below for the notional simplicity)
such that the distribution of $\{X^{\e_n}\}_{n\ge1}$ on
$\mathscr{D}([0,\infty);\R^d)$ equipped with the Skorohod topology converges weakly to a probability measure
$\bar \Pp$ on $\mathscr{D}([0,\infty);\R^d)$. Let
\begin{equation}\label{limit-operator}
\bar \sL f(x)=
\int_{\R^d}\big(f(x+z)-f(x)\big)\bar k_0(z)\,\Pi_0 (dz) ,
\end{equation} which  is
the infinitesimal generator of the L\'evy process as in the
statement. In particular, the associated martingale problem for $(\bar \sL, C^2_c (\R^d))$
is unique. Thus, it suffices to verify that for
any subsequence of $\{\e_n\}_{n\ge 1}$, the limit distribution $\bar
\Pp$ is the same as that of the solution to the martingale problem
for the operator $(\bar \sL, C^2_c(\R^d))$.

Due to the fact that the distribution of $\{X^{\e_n}\}_{n\ge1}$ converges weakly to $\bar \Pp$
in $\D([0, \infty); \R^d)$,
there exist a
probability space $(\wt  \Omega, \wt  \F, \wt  \Pp)$, and a
series of stochastic processes $\{\wt  X^n\}_{n\ge1}$
and $\wt  X$ defined on it, such that
the distribution of $\wt  X^n$ under $\wt
\Pp$ is the same as that of $X^{\e_n}$  under
$\Pp_0$ for any $n\ge1$, the distribution of $ \wt  X $ is the same
as $\bar \Pp$, and $\wt  X^n$
converges to $\wt
X$ almost surely
in $\D([0, \infty); \R^d)$.

Note again that $X^\e:=((X_{t}^\e)_{t\ge 0}; (\Pp^x)_{x\in \R^d})$
is a solution to the martingale problem for the operator
$(\sL^\e, C^2_c(\R^d))$. Then, for every $0<s_1<s_2,\cdots<s_k<s\le t$, $f\in C_c^2(\R^d)$
and $G\in C_b(\R^{dk})$,
$$
\wt \Ee\left[\left(f(\wt  X_t^n)-f(\wt  X_s^n)-\int_s^t
\sL^{\e_n}f(\wt  X_r^n)\,dr\right) G\big(\wt  X_{s_1}^n,\cdots,
\wt  X_{s_k}^n\big)\right]=0.
$$
According to \eqref{t3-1-2}, Lemma \ref{l2-2} and the dominated
convergence theorem,
\begin{equation}\label{t3-1-5}
\lim_{n \rightarrow \infty}\wt \Ee\left[\left(f(\wt
X_t^n)-f(\wt  X_s^n)- \int_s^t  \bar \sL^{\e_n} f(\wt
X_r^n)\,dr\right) G\big(\wt  X_{s_1}^n,\cdots, \wt
X_{s_k}^n\big)\right]=0,
\end{equation}
where $\bar \sL^\e f(x)$ is defined by \eqref{l2-2-1a}.
Set $F:\T^d\times \R^d \to \R$ and $\bar F:\R^d\to \R$ by
\begin{align*}
F(x,y):=\int_{\R^d}\big(f(y+z)-f(y)\big)\bar k(x,z)\,\Pi_0 (dz) , \quad \bar F(y):=\int_{\T^d} F(x,y)\,\mu(dx),
\end{align*}
where $\bar k(x,z)$ is given by \eqref{e:3.6}.
Then, $\bar \sL^{\e}f(x)=F(x/\e,x)$ and $\bar \sL f(x)=\bar F(x)$. Therefore,
\begin{align*}
&\wt  \Ee\left[\left|\int_s^t \bar \sL^{\e_n}f(\wt  X_r^n)\,dr-\int_s^t \bar \sL f(\wt  X_r)\,dr\right|\right]\\
&\preceq  \wt  \Ee\left[\left|\int_s^t \big(F(\wt X_r^n/\e_n, \wt X_r^n)-\bar F
(\wt X_r^n)\big)\,dr\right|\right]+
\wt  \Ee\left[\left|\int_s^t \big(\bar F(\wt X_r^n)-\bar F(\wt  X_r)\big)\,dr\right|\right]\\
&=:I_1^n+I_2^n.
\end{align*}
Following the proof of \eqref{t3-1-4}, we have that for any $\theta>0$ and $\delta(\e)>0$ with $\lim_{\e\to0}\delta(\e)=0$,
$$
\lim_{\e\to 0} \Pp_0\Big(\sup_{0\le s\le t\le s+ \delta(\e)}|X_{t}^\e-X_s^\e|>\theta\Big)=0.
$$
Clearly,
$$
\Ee_0  \left[\sup_{0\le s\le t\le s+ \delta(\e)}|X_t^\e-X_s^\e| \wedge1\right]
\le  \Pp_0\Big(\sup_{0\le s\le t\le s+ \delta(\e)}|X_t^\e-X_s^\e|>\theta\Big)+\theta.
$$
By letting $\e\to0$ first and then $\theta\to0$ in the inequality above, we get that \eqref{e:cond} holds
as $\lim_{\e\to 0} \rho(1/\e) = \lim_{\e\to 0}\varphi(1/\e)=\infty$. Thus, by
Lemma \ref{l2-4} and the fact that $F$ is uniformly continuous,
\begin{equation}\label{e:I1}
\lim_{n\to\infty}\wt  \Ee\left[\left|\int_s^t \big(F ( \wt X_r^n/\e_n, \wt  X_r^n)-\bar F
(\wt
X_r^n)\big)\,dr\right|^2\right]=0,
\end{equation}
and so
 $\lim_{n \rightarrow \infty}I_1^n=0$. On the other hand,  by
the facts that $\wt  X^n$
converges almost surely
in $\D([0, \infty); \R^d)$ to
 $\wt  X$
and $\bar F\in C_b(\R^d)$,
as well as the dominated
convergence theorem, it holds that $\lim_{n \rightarrow
\infty}I_2^n=0$. Thus, we
obtain
$$
\lim_{n \rightarrow \infty} \wt  \Ee\left[\left|\int_s^t \bar
\sL^{\e_n}f(\wt  X_r^n)\,dr-\int_s^t \bar \sL f(\wt
X_r)\,dr\right|\right]=0.
$$

Putting the estimate above into \eqref{t3-1-5} and letting $n
\rightarrow \infty$, we get
$$
\wt \Ee\left[\left(f(\wt  X_t)-f(\wt  X_s)-\int_s^t \bar \sL
f(\wt  X_r)\,dr\right) G\big(\wt  X_{s_1},\cdots, \wt
X_{s_k}\big)\right]=0.
$$
Thus,
$(\wt X, \wt \Pp)$ is a solution for the martingale problem $(\bar
\sL, C^2_c(\R^d))$.  This shows that $\wt X$ is a pure jump L\'evy
process with L\'evy measure $\bar k_0(z)\, \Pi_0 (dz) $.
\end{proof}

\subsection{Invariance principle
with recentering: $\alpha\in [1,2)$}\label{section3-2}

In this subsection, we are concerned with the case that $\alpha\in [1,2)$ and will present
scaling limit theorems that require recentering for
$X^\e$ of \eqref{e:scaled}.
Recall that $\Pi_0 (dz) $ is defined by
\eqref{e:3.2a} and $\bar k_0(z):=\int_{\T^d}\int_{\T^d}\bar k(x,z)\,\mu(dx)$, where
 $\mu$ is the unique invariant probability  measure of $X$ on $\T^d$, and  $\bar k(x,z)$ is
the function in \eqref{e:3.6}.
The following is the main result of this subsection.

\begin{theorem}\label{t3-2}
Suppose that  \eqref{e:3.3a} and \eqref{e:3.6} hold,
and that Assumption ${\bf (A3)}$ is satisfied.
\begin{itemize}
\item[(i)] Assume that \eqref{e:3.1} holds with $\alpha=1$.
Let $$Y_t^\e:=X_t^\e-\e\varphi(1/\e)(\bar b_{1/\e}+\bar b) t=
\e\big(X_{\varphi(1/\e)t}-\varphi(1/\e)(\bar b_{1/\e}+\bar b)
t\big),\quad t\ge 0,$$ where $\bar b_{1/\e}:=\int_{\T^d}
b_{1/\e}(x)\,\mu(dx)$ and  $\bar
b:=\int_{\T^d} b(x)\,\mu(dx)$. Then,
as $\e \to 0$, $(Y_t^\e)_{t\ge 0}$ converges weakly in $\D ([0, \infty); \R^d)$
to a Cauchy $($i.e.\ $1$-stable$)$
L\'evy process whose generator $\bar \sL$ is
\begin{equation}\label{t3-2-1}
\bar \sL f(x)=\int_{\R^d}\big(f(x+z)-f(x)-\langle \nabla f(x), z\I_{\{|z|\le 1\}}\rangle \big)\bar k_0(z) \,\Pi_0 (dz) .
\end{equation}

\item [(ii)] Assume that \eqref{e:3.1} holds with $\alpha \in (1, 2)$.
Let $$Y_t^\e:=X_t^\e-\e\varphi(1/\e)(\bar b_\infty+\bar b) t=
\e\big(X_{\varphi(1/\e)t}-\varphi(1/\e)(\bar b_\infty+\bar b)
t\big),\quad t\ge 0,$$ where $\bar b_\infty:=\int_{\T^d} b_\infty(x)\,\mu(dx)$.
Then,
as $\e \to 0$, $(Y_t^\e)_{t\ge 0}$ converges weakly in $\D ([0, \infty); \R^d)$
to  an $\alpha$-stable  L\'evy process
whose generator $\bar \sL$ is
$$
\bar \sL f(x)=\int_{\R^d}\big(f(x+z)-f(x)-\langle \nabla f(x), z\rangle \big)\bar k_0(z)\, \Pi_0 (dz) .
$$
\end{itemize}
\end{theorem}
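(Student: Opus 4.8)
The plan is to follow the architecture of the proof of Theorem~\ref{t3-1} --- tightness plus identification of the weak limit through the martingale problem --- but now insert a corrector transformation and a deterministic recentering to absorb the drift term $\e\varphi(1/\e)\langle b(\cdot/\e),\nabla f\rangle$ of \eqref{e:3.7}, which for $\alpha\in[1,2)$ no longer vanishes. First the correctors. In case (ii), since $\eta_0<\alpha-1$ the second bound in \eqref{e:3.1} gives $\int_{\{|z|>1\}}|z|\,\Pi(dz)\preceq\int_1^\infty\varphi(r)^{-1}\,dr<\infty$, so $b_\infty$ of \eqref{e:1.3} is a bounded $1$-periodic function and, by Assumption~{\bf(A3)}, there is a unique $1$-periodic $\psi\in\mathscr D(\sL)$ with $\sL\psi=(\bar b_\infty+\bar b)-(b_\infty+b)$, $\mu(\psi)=0$, and $\|\psi\|_\infty+\|\nabla\psi\|_\infty<\infty$ by \eqref{a3-1-2}. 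In case (i) the corrector is $\e$-dependent: let $\psi_\e\in\mathscr D(\sL)$ solve $\sL\psi_\e=(\bar b_{1/\e}+\bar b)-(b_{1/\e}+b)$ with $\mu(\psi_\e)=0$; since $\|b_{1/\e}\|_\infty\preceq\int_1^{1/\e}\varphi(r)^{-1}\,dr\preceq\e^{-\eta_0}$ (lower bound in \eqref{e:3.1} with $\alpha=1$) and by \eqref{a3-1-2}, $\|\psi_\e\|_\infty+\|\nabla\psi_\e\|_\infty\preceq\e^{-\eta_0}$, whence $\e(\|\psi_\e\|_\infty+\|\nabla\psi_\e\|_\infty)\to0$. (After a routine mollification --- all bounds in Lemma~\ref{l2-1} use only $\|\psi\|_\infty$ and $\|\nabla\psi\|_\infty$ --- we may assume $\psi,\psi_\e\in C_b^1(\R^d;\R^d)$.) Put $\Phi_\e(x):=x+\e\psi_\e(x/\e)$ and $\beta_\e:=\e\varphi(1/\e)(\bar b_{1/\e}+\bar b)$ in case (i), and $\Phi_\e(x):=x+\e\psi(x/\e)$, $\beta_\e:=\e\varphi(1/\e)(\bar b_\infty+\bar b)$ in case (ii), and set $\widetilde Y^\e_t:=\Phi_\e(X^\e_t)-\beta_\e t=Y^\e_t+\e\psi_\e(X^\e_t/\e)$ (resp.\ $Y^\e_t+\e\psi(X^\e_t/\e)$). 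Since $\sup_t|\widetilde Y^\e_t-Y^\e_t|\to0$, it suffices to prove that $\widetilde Y^\e$ converges weakly in $\D([0,\infty);\R^d)$ to the stated L\'evy process.

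The heart of the argument is the generator identity: for every $f\in C_c^3(\R^d)$,
\begin{equation}\label{e:plan-gen}
\lim_{\e\to0}\ \sup_{x\in\R^d}\Big|\sL^\e(f\circ\Phi_\e)(x)-\langle\beta_\e,\nabla(f\circ\Phi_\e)(x)\rangle-\int_{\R^d}\big(f(\Phi_\e(x)+z)-f(\Phi_\e(x))-\langle\nabla f(\Phi_\e(x)),z\I_{\{|z|\le1\}}\rangle\big)\bar k(x/\e,z)\,\Pi_0(dz)\Big|=0
\end{equation}
in case (i), and the same with $\I_{\{|z|\le1\}}$ replaced by $1$ in case (ii). To prove \eqref{e:plan-gen}, first re-compensate the jump part of $\sL^\e$ from $\{|z|\le1\}$ to $\{|z|\le1/\e\}$ (case (i)), resp.\ over all of $\R^d$ (case (ii)), which turns the drift coefficient into $(b_{1/\e}+b)(\cdot/\e)$, resp.\ $(b_\infty+b)(\cdot/\e)$, both finite by the first paragraph; then apply Lemma~\ref{l2-1} with $M=1/\e$ (case (i)), resp.\ $M=\infty$ (case (ii)), to $f\circ\Phi_\e$. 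The corrector's jump contribution (the integral of the term $\psi_\e(x/\e+z)-\psi_\e(x/\e)-\nabla\psi_\e(x/\e)z\I$ from Lemma~\ref{l2-1}) combines with the piece $\e\varphi(1/\e)\langle\nabla f(\Phi_\e(x)),\nabla\psi_\e(x/\e)(b_{1/\e}+b)(x/\e)\rangle$ of the drift to equal exactly $\e\varphi(1/\e)\langle\nabla f(\Phi_\e(x)),\sL\psi_\e(x/\e)\rangle$; adding the remaining drift $\e\varphi(1/\e)\langle\nabla f(\Phi_\e(x)),(b_{1/\e}+b)(x/\e)\rangle$ and invoking the equation for $\psi_\e$ collapses everything oscillatory to the constant $\beta_\e=\e\varphi(1/\e)(\bar b_{1/\e}+\bar b)$, which cancels $\langle\beta_\e,\nabla f(\Phi_\e(x))\rangle$, the residual $\langle\beta_\e,\nabla\psi_\e(x/\e)^{\top}\nabla f(\Phi_\e(x))\rangle$ being precisely what promotes $\nabla f(\Phi_\e(x))$ to $\nabla(f\circ\Phi_\e)(x)$. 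The surviving "clean" integral $\varphi(1/\e)\int(f(\Phi_\e(x)+\e z)-f(\Phi_\e(x))-\langle\nabla f(\Phi_\e(x)),\e z\rangle\I_{\{|z|\le1/\e\}})k(x/\e,z)\,\Pi(dz)$ converges to the integral in \eqref{e:plan-gen} uniformly in $x$ by the argument of Lemma~\ref{l2-2} (via \eqref{e:3.1}, \eqref{e:3.4}, \eqref{e:3.6} and the counterpart of \eqref{e:NO} for $\alpha\in[1,2)$); the remaining second-order and $G_\e$ terms are bounded by $C\e^2\varphi(1/\e)(1+\|\psi_\e\|_\infty+\|\nabla\psi_\e\|_\infty)^3+o(1)$, which tends to $0$ because $\e^2\varphi(1/\e)\to0$ when $\alpha\in(1,2)$ and, when $\alpha=1$, because $\eta_0<1/6$ keeps the resulting power of $\e$ strictly positive; in case (i) the range $\{|z|>1/\e\}$ is split off from Lemma~\ref{l2-1} and handled directly, using $|f(\Phi_\e(x+\e z))-f(\Phi_\e(x))-(f(\Phi_\e(x)+\e z)-f(\Phi_\e(x)))|\le2\e\|\psi_\e\|_\infty\|\nabla f\|_\infty$ together with $\sup_\e\varphi(1/\e)\int_{\{|z|>1/\e\}}\Pi(dz)<\infty$ from \eqref{e:NO}, giving error $O(\e\|\psi_\e\|_\infty)=o(1)$.

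With \eqref{e:plan-gen} the rest mirrors the proof of Theorem~\ref{t3-1}. Applying the optional-stopping identity \eqref{t3-1-1a} to $f_R^y\circ\Phi_\e$ and $f_\theta^y\circ\Phi_\e$ (with $f_l$ as in \eqref{e:funcOO}) and the uniform bound on $\sL^\e(f\circ\Phi_\e)-\langle\beta_\e,\nabla(f\circ\Phi_\e)\rangle$ from \eqref{e:plan-gen} yields $\lim_{R\to\infty}\sup_\e\Pp_0(\sup_{[0,T]}|\widetilde Y^\e_t|>R)=0$ and $\lim_{\e\to0}\Pp_0(|\widetilde Y^\e_{\tau+\delta(\e)}-\widetilde Y^\e_\tau|>\theta)=0$ for stopping times $\tau\le T$ and $\delta(\e)\to0$, so $\{\widetilde Y^\e\}$ is tight in $\D([0,\infty);\R^d)$ and, as in \eqref{t3-1-4}, condition \eqref{e:cond} holds for $\widetilde Y^\e$. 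The martingale problem for $(\bar\sL,C_c^2(\R^d))$ with $\bar\sL$ as in (i) (resp.\ (ii)) is well posed ($\int_{\{|z|\le1\}}|z|^2\,\Pi_0(dz)<\infty$ since $\alpha<2$, and $\int_{\{|z|>1\}}|z|\,\Pi_0(dz)<\infty$ since $\alpha>1$ in case (ii)), so it suffices to check that any subsequential weak limit $\bar\Pp$ of $\widetilde Y^\e$ solves it. Passing to a common probability space with $\widetilde Y^{\e_n}\to\widetilde Y$ a.s., one starts from the time-dependent martingale identity for $X^{\e_n}$ applied to $x\mapsto f(\Phi_{\e_n}(x)-\beta_{\e_n}r)$, uses \eqref{e:plan-gen} to replace $\sL^{\e_n}(\cdots)(X^{\e_n}_r)$ by $\int(\cdots)\bar k(X^{\e_n}_r/\e_n,z)\,\Pi_0(dz)$ up to a uniformly vanishing error, then applies Lemma~\ref{l2-4}(ii) with $F(w,y)=\int(f(y+z)-f(y)-\langle\nabla f(y),z\I_{\{|z|\le1\}}\rangle)\bar k(w,z)\,\Pi_0(dz)$ (bounded and uniformly continuous on $\T^d\times\R^d$, with $\bar F=\bar\sL f$ since $\int_{\T^d}\bar k(w,z)\,\mu(dw)=\bar k_0(z)$) to replace it by $\bar\sL f(\widetilde Y^{\e_n}_r)$ in $L^1$, and lets $n\to\infty$. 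Hence $\widetilde Y$ solves the martingale problem for $(\bar\sL,C_c^2(\R^d))$, is the asserted $\alpha$-stable (resp.\ Cauchy) L\'evy process, the whole family $\widetilde Y^\e$ converges, and so does $Y^\e$ by the reduction in the first paragraph.

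The main obstacle is the generator identity \eqref{e:plan-gen} in the critical case $\alpha=1$: there the corrector $\psi_\e$ genuinely depends on $\e$ and may grow like $\e^{-\eta_0}$, so one must follow the exact cancellation of the divergent oscillatory drift, choose the re-compensation level and the truncation $M$ in Lemma~\ref{l2-1} so that the remainder $G_\e$ is integrable against $\varphi(1/\e)\,\Pi$, and balance the powers of $\e$ via the two-sided estimate in \eqref{e:3.1} --- this is exactly where the restriction $\eta_0\in(0,1/6)$ for $\alpha=1$ is used.
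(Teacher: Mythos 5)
Your proposal is correct and follows essentially the same architecture as the paper's proof: the same $\e$-dependent corrector $\psi^\e$ (resp.\ $\psi$), the same application of Lemma~\ref{l2-1} with $M=1/\e$ (resp.\ $M=\infty$), the same algebraic cancellation of the oscillatory drift via the corrector equation, the same tightness argument through \eqref{e:funcOO} and \eqref{t3-1-1a}, and the same martingale-problem identification via Skorohod representation and Lemma~\ref{l2-4}. In fact your sharper bound $2\e\|\psi_\e\|_\infty\|\nabla f\|_\infty$ for the $\{|z|>1/\e\}$ remainder (rather than $\|f\|_\infty$) is what is actually needed in the critical case $\alpha=1$ for the error term to vanish, since the cruder term $\varphi(1/\e)\|f\|_\infty\int_{1/\e}^\infty \frac{dr}{r\varphi(r)}$ appearing in \eqref{t3-2-6-1} is only $O(1)$ rather than $o(1)$.
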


Note that when \eqref{e:3.1} holds with $\alpha \in (1, 2)$ (resp.\ $\alpha=1$), $\lim_{\e \to 0} \e \varphi (1/\e) = \infty$ (resp.\ $\lim_{\e \to 0} \e \varphi (1/\e)>0$).
So in assumptions of Theorem \ref{t3-2}, one really needs to recenter $X^\e$ in order to have a limit.

To prove Theorem \ref{t3-2}, we need two lemmas. The first one is
analogous to Lemma \ref{l2-2}. Recall that the infinitesimal generator $\sL^\e$, given by \eqref{e:3.7}, of the process $X^\e$ can be written as
$$\sL^\e f(x)=\sL_0^\e f(x)+\e\varphi(1/\e)\left\langle \nabla f(x), b_{1/\e}(x/\e )+b(x/\e )\right\rangle$$ and
$$
\sL_0^\e f(x):=\varphi(1/\e)\int_{\R^d}\big(f(x+\e
z)-f(x)-\langle \nabla f(x), \e z\rangle\I_{\{|z|\le 1/
\e\}}\big)k(x/\e , z) \, \Pi (dz) .
$$
Note that, according to \eqref{e:3.8},
$$
\int_{\{|z|>1\}} | z| k(x,z)\,\Pi (dz) \preceq \int_{1}^{\infty}\frac{1}{\varphi(r)}\,dr,
$$
and so we can define
\begin{equation}\label{e3}
b_\infty(x)=\int_{\{|z|>1\}}zk(x,z)\,\Pi (dz)
\end{equation}
provided
\begin{equation}\label{e2-1}
\int_{1}^{\infty}\frac{1}{\varphi(r)}\,dr<\infty.
\end{equation}
In this case,
$$
\sL^\e f(x)=\sL_1^\e f(x)+\e\varphi(1/\e)
\left\langle \nabla f(x),
b_\infty(x/\e )+b(x/\e )\right\rangle,
$$
where
\begin{equation}\label{e:3.25}
  \sL_1^\e f(x)= \varphi(1/\e)\int_{\R^d}
\big(f(x+\e z)-f(x)-\langle \nabla f(x), \e z\rangle \big)k(x/\e , z)\,\Pi (dz) .
\end{equation}

\begin{lemma}\label{l2-2-111111}
\begin{itemize}
\item[(i)] For any $\e\in (0,1)$ and $x\in \R^d$, define
\begin{equation}\label{e:lllsssppp-}
\sL_{0,x}^\e f(y):=\varphi(1/\e)\int_{\R^d}\big(f(y+\e
z)-f(y)-\langle \nabla f(y), \e z\rangle\I_{\{|z|\le 1/
\e\}}\big)k(x/\e , z) \, \Pi (dz) .
\end{equation}
Suppose that \eqref{e:3.1} holds with $\alpha=1$. Then,
for every $f\in C_b^2(\R^d)$,
\begin{equation}\label{l2-2-1}
\lim_{\e \to 0}\sup_{x,y\in \R^d}|\sL_{0,x}^\e f(y)- \bar \sL^\e_{0,x}
f(y)|=0,
\end{equation}
where $$\bar \sL_{0,x}^\e f(y):=
\int_{\R^d}\big(f(y+z)-f(y)-\langle \nabla f(y), z\I_{\{|z|\le 1\}}\rangle \big)\bar k(x/\e ,z)\,\Pi_0 (dz) .$$

\item[(ii)] For any $\e\in (0,1)$ and $x\in \R^d$, define
$$  \sL_{1,x}^\e f(y)= \varphi(1/\e)\int_{\R^d}
\big(f(y+\e z)-f(y)-\langle \nabla f(y), \e z\rangle \big)k(x/\e , z)\,\Pi (dz) .$$
Suppose that \eqref{e:3.1} holds with $1<\alpha <2$. Then, for every $f\in C_b^2(\R^d)$,
\begin{equation}\label{l2-2-2}
\lim_{\e \to 0}\sup_{x,y\in \R^d}|  \sL_{1,x}^\e f(y)-\bar \sL_{1,x}^\e f(y)|=0,
\end{equation}where $$\bar \sL_{1,x}^\e f(y):=
\int_{\R^d}\big(f(y+z)-f(y)-\langle \nabla f(y), z\rangle \big)\bar k(x/\e ,z)\,\Pi_0 (dz) .$$ \end{itemize}
 \end{lemma}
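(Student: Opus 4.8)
The plan is to prove (i) and (ii) by the same three-region decomposition used in the proof of Lemma~\ref{l2-2} (to which the present lemma is the natural analogue), the two new features being that the compensating gradient term must be carried along throughout, and that in part~(i) the truncation indicator is discontinuous at $|z|=1$. After the substitution $w=\e z$ and using \eqref{e:3.3a}, on $\{|w|>\e\}$ one has, in spherical coordinates $w=(r,\theta)$,
$$
\varphi(1/\e)\,\Pi(d(w/\e))=\frac{\varphi(1/\e)}{r\,\varphi(r/\e)}\bigl(\varrho_0(d\theta)+\kappa(r/\e,d\theta)\bigr)\,dr,
$$
while on $\{|w|\le\e\}$ it is just the rescaling of the small-jump part $\I_{\{|z|\le1\}}\Pi(dz)$. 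Observe that, unlike the situation in Lemma~\ref{l2-2}, both $\sL_{j,x}^\e$ and $\bar\sL_{j,x}^\e$ ($j=0,1$) are already internally balanced: in the scaled variable their compensators coincide, being $\langle\nabla f(y),w\rangle\I_{\{|w|\le1\}}$ when $j=0$ and $\langle\nabla f(y),w\rangle$ when $j=1$, so no auxiliary drift correction term (such as $b_{\delta/\e}$) appears. I would fix $\delta\in(0,1)$, split each operator into the contributions of $\{|w|\le\delta\}$, $\{\delta<|w|<1/\delta\}$ and $\{|w|\ge1/\delta\}$, and in part~(i) split the middle annulus further at $\{|w|=1\}$; this sphere is a null set for $\Pi_0$ and for the rescaled $\Pi$ (both absolutely continuous in $r$), so on each of the two resulting pieces the integrand $g_y(w):=f(y+w)-f(y)-\langle\nabla f(y),w\rangle\I_{\{|w|\le1\}}$ is continuous.

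The small- and large-jump contributions are then disposed of uniformly in $x,y$ and in $\e\in(0,1)$. On $\{|w|\le\delta\}$, Taylor's theorem gives $|g_y(w)|\le\tfrac12\|\nabla^2 f\|_\infty|w|^2$; splitting at $|w|=\e$, the part over $\{|w|\le\e\}$ is bounded by $\e^2\varphi(1/\e)\int_{\{|z|\le1\}}|z|^2\,\Pi(dz)\to0$ (using $\varphi(1/\e)\preceq\e^{-\alpha-\eta_0}$ together with $\alpha+\eta_0<2$), while the part over $\{\e<|w|\le\delta\}$, after the change of variables $s=r/\e$ and the power bounds in \eqref{e:3.1}, is of order $\delta^{2-\alpha-\eta_0}$. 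On $\{|w|\ge1/\delta\}$ one has $|g_y(w)|\le2\|f\|_\infty$ when $j=0$ and $|g_y(w)|\le2\|f\|_\infty+\|\nabla f\|_\infty|w|$ when $j=1$: the bounded part contributes $O(\delta^{\alpha-\eta_0})$, which uses $\eta_0<\alpha$, and in part~(ii) the extra linear part contributes $O(\delta^{\alpha-1-\eta_0})$, which is finite and vanishes as $\delta\to0$ exactly because $\eta_0<|\alpha-1|$. The corresponding pieces of $\bar\sL_{j,x}^\e$, with $\Pi_0$ replacing the rescaled $\Pi$, are $O(\delta^{2-\alpha})$ near $0$ and $O(\delta^{\alpha})$ near $\infty$ when $j=0$, plus an extra $O(\delta^{\alpha-1})$ near $\infty$ when $j=1$; these vanish as $\delta\to0$ since $\alpha=1$ in (i) and $1<\alpha<2$ in (ii). Hence all of these contributions disappear upon letting first $\e\to0$ and then $\delta\to0$.

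The core of the argument is the middle region $\{\delta<|w|<1/\delta\}$. There the rescaled measure splits into the $\varrho_0$-term $\frac{\varphi(1/\e)}{r\varphi(r/\e)}\varrho_0(d\theta)\,dr$ and a $\kappa$-term; since $\sup_{r\ge\delta}|\kappa|(r/\e,\bS^{d-1})\to0$ as $\e\to0$ by \eqref{e:3.4}, and $\frac{\varphi(1/\e)}{r\varphi(r/\e)}$ is bounded on $[\delta,1/\delta]$ by a constant depending only on $\delta$ (again by the power bounds in \eqref{e:3.1}), the $\kappa$-contribution is $o(1)$ uniformly in $x,y$. For the $\varrho_0$-contribution I would use that the pointwise limit $\varphi(\lambda r)/\varphi(\lambda)\to r^\alpha$ in \eqref{e:3.1}, being a convergence of functions increasing in $r$ to a continuous increasing limit that is bounded away from $0$ on $[\delta,1/\delta]$, is in fact uniform on $[\delta,1/\delta]$ (a Dini/P\'olya-type argument for monotone functions). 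Hence $\frac{\varphi(1/\e)}{r\varphi(r/\e)}\to r^{-1-\alpha}$ uniformly on $[\delta,1/\delta]$, so by dominated convergence the $\varrho_0$-part of $\sL_{j,x}^\e f(y)$ reduces to $\int_{\{\delta<|w|<1/\delta\}}g_y(w)\,k(x/\e,w/\e)\,\Pi_0(dw)+o(1)$. Finally, replacing $k(x/\e,w/\e)$ by $\bar k(x/\e,w)$ with an error that is $o(1)$ uniformly in $x$ is precisely the averaging hypothesis \eqref{e:3.6}, applied to the integrand $g_y$, which satisfies \eqref{e:3.4b} because it does not depend on the first variable and is continuous; the additional uniformity in $y$ follows since the family $\{g_y:y\in\R^d\}$ is uniformly bounded and equicontinuous on the compact annulus, hence totally bounded in the supremum norm by Arzel\`a--Ascoli, so it is enough to invoke \eqref{e:3.6} for finitely many members. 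Reassembling the three regions, together with the extra split at $\{|w|=1\}$ in part~(i), and letting $\e\to0$ and then $\delta\to0$, yields \eqref{l2-2-1} and \eqref{l2-2-2}.

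The step I expect to be the main obstacle is the careful power counting that ties each of the above error bounds to the precise admissible range of $\eta_0$: in particular, checking that the small-jump error is killed by $\e^2\varphi(1/\e)\to0$ together with $\delta^{2-\alpha-\eta_0}\to0$, and that in part~(ii) the genuinely compensated linear term $\langle\nabla f(y),w\rangle$, integrated against $|w|$ over $\{|w|\ge1/\delta\}$, is controlled by $\delta^{\alpha-1-\eta_0}\to0$, which is exactly where $\eta_0<|\alpha-1|$ is used. A secondary but genuine subtlety, specific to part~(i), is justifying the split of the middle annulus at $\{|w|=1\}$ and the uniform-in-$y$ application of \eqref{e:3.6} despite the discontinuity of $\I_{\{|w|\le1\}}$.
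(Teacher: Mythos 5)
Your proof is correct and follows essentially the same three-region (small jumps, middle annulus, large jumps) decomposition that the paper uses, with the middle region handled via \eqref{e:3.4}, the uniform version of \eqref{e:3.1}, and the averaging hypothesis \eqref{e:3.6}, and with the same power-counting role played by the bound $\eta_0 < |\alpha-1| \wedge (2-\alpha)$. You supply two small but genuine refinements that the paper's sketch (which only proves (ii) and refers back to the argument for \eqref{referen-1}) leaves implicit: the Arz\`ela--Ascoli reduction that makes the application of \eqref{e:3.6} uniform in $y$ as well as in $x$, and the observation in part (i) that $\{|w|=1\}$ is $\Pi_0$-null so the annulus may be split there to restore continuity of the integrand.
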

\begin{proof} We only prove (ii), since the proof of (i) is similar and simpler.

Suppose that $1<\alpha<2$. Then, by  \eqref{e:3.1}, we have
\eqref{e2-1},
and so $  \sL_{1,x}^\e f$ is well defined for any $\e\in (0,1)$ and $x\in \R^d$.
Moreover, according to \eqref{e:3.1} and $1<\alpha<2$,
\begin{equation}\label{a2-1-3a}
\begin{split}
&\lim_{\e\to0} \e^2\varphi(1/\e)=0,\quad \lim_{\delta \to 0}\left(\int_0^\delta\frac{r}{\varphi(r)}\,dr+\int_{1/\delta}^\infty \frac{1}{\varphi(r)}\,dr\right)=0,\\
&\lim_{\delta\to0}\limsup_{\varepsilon\to0} \left(\e^2\varphi(1/\e)\int_1^{\delta/\varepsilon}\frac{r}{\varphi(r)}\,dr\right)=0,\quad \lim_{\delta\to0}\limsup_{\varepsilon\to0} \left(\e\varphi(1/\e)\int_{1/(\delta\varepsilon)}^\infty\frac{1}{\varphi(r)}\,dr\right)=0.\end{split}
\end{equation}
The proof for \eqref{a2-1-3a} is similar to that of \eqref{e:NO}, and we omit
the details.

For every $\delta\in (0,1)$ and $x\in\R^d$, we write
\begin{align*}
 \sL_{1,x}^\e f(y)=&\varphi(1/\e)
\left(\int_{\{|z|\le {\delta}/{\e}\}}+\int_{\{{\delta}/{\e}<|z|< {1}/({\delta\e})\}}+
\int_{\{|z|\ge {1}/({\delta\e})\}}\right)\\
&~~~~~\qquad\qquad\qquad\qquad\qquad\qquad~~~~~~~~~
\big(f(y+\e z)-f(y)-\langle \nabla f(y), \e z\rangle\big)k(x/\e , z)\,\Pi (dz)\\
=&:\sum_{i=1}^3 \sL_{1,x,i}^{\e,\delta}f(y)
\end{align*} and
\begin{align*}
\bar \sL^\e_{1,x} f(y)=&
\left(\int_{\{|z|\le \delta\}}+\int_{\{\delta<|z|< 1/ \delta \}}+\int_{\{|z|\ge  1/ \delta \}}\right)
\big(f(y+z)-f(y)-\langle \nabla f(y), z\rangle\big)\bar k(x/\e,z)\,\Pi_0 (dz)\\
=&:\sum_{i=1}^3 \bar \sL_{1,x,i}^{\e,\delta}f(y).
\end{align*}

By \eqref{e:3.2a} and \eqref{a2-1-3a},
it is obvious that
\begin{align*}
&\lim_{\delta \to 0}\sup_{\e\in(0,1)}\sup_{x,y\in \R^d}\Big(|\bar \sL_{1,x,1}^{\e,\delta}f(y)|
+|\bar \sL_{1,x,3}^{\e,\delta}f(y)|\Big)\\
&\preceq
\lim_{\delta \to 0}
\bigg(\|\nabla^2 f\|_\infty \int_{0}^\delta \frac{r}{\varphi(r)}\,dr+
\|f\|_\infty \int_{{1}/{\delta}}^\infty\frac{1}{r\varphi(r)}\,dr+
\|\nabla f\|_\infty \int_{{1}/{\delta}}^\infty\frac{1}{\varphi(r)}\,dr\bigg)=0.
\end{align*}
On the other hand,  according to \eqref{e:3.8},
we have
\begin{align*}
\sup_{x,y\in \R^d}|\sL_{1,x,1}^{\e,\delta}f(y)|
&\preceq \|\nabla^2 f\|_\infty \e^2\varphi(1/\e)
\int_{\{|z|\le {\delta}/{\e}\}}|z|^2\,\Pi (dz) \\
&\preceq  \|\nabla^2 f\|_\infty
\e^2\varphi(1/\e)\left(
\int_{\{|z|\le  1\}}|z|^2\,\Pi (dz)
+\int_1^{{\delta}/{\e}}\frac{r}{\varphi(r)}\,dr\right)
\end{align*} and
\begin{align*}
\sup_{x,y\in \R^d}
|\sL_{1,x,3}^{\e,\delta}f(y)|&
\preceq \varphi(1/\e)\int_{\{|z|\ge {1}/({\delta\e})\}}\big(\|f\|_\infty+\e \|\nabla f\|_\infty|z|\big)\,\Pi (dz) \\
&\preceq \big(\|f\|_\infty+\|\nabla f\|_\infty\big)
\left(\varphi(1/\e)\int_{{1}/({\delta\e})}^{\infty} \frac{1}{r\varphi(r)}\,dr
+\varphi(1/\e)\e\int_{{1}/({\delta\e})}^{\infty} \frac{1}{\varphi(r)}\,dr\right)\\
& \preceq \big(\|f\|_\infty+\|\nabla f\|_\infty\big) \,
\varphi(1/\e)\e\int_{{1}/({\delta\e})}^{\infty} \frac{1}{\varphi(r)}\,dr.
\end{align*}
These estimates along with \eqref{a2-1-3a} yields that
$$
\lim_{\delta \to 0}\limsup_{\e \to 0}\sup_{x,y\in \R^d}\big(|\sL_{1,x,1}^{\e,\delta}f(y)|+|\sL_{1,x,3}^{\e,\delta}f(y)|\big)=0.
$$

Following the argument for \eqref{referen-1},
we can also obtain  that for every fixed $\delta\in (0,1)$,
 $$
\lim_{\e \to 0}\sup_{x,y\in \R^d}
|\sL_{1,x,2}^{\e,\delta}f(y)-\bar \sL_{1,x,2}^{\e,\delta}f(y)|=0.
$$

Combining all the estimates above, by first letting $\e \to 0$ and then $\delta \to 0$,
we get the assertion \eqref{l2-2-2}.
\end{proof}

In the next lemma, we use the convention $1/0=\infty$.

\begin{lemma}\label{t3} Suppose that Assumption ${\bf (A3)}$ holds. For any $\e\in [0,1]$, let
$\psi^\e\in \mathscr{D}(\sL)$
be the solution to
\begin{equation}\label{t3-2-2}
\sL \psi^\e(x)=-b_{1/\e}(x)-b(x)+\bar b_{1/\e}+\bar b,\quad x\in \T^d
\end{equation}
with
 $\mu(\psi^\e)=0$.
Then,
\begin{equation}\label{t3-2-3}
\|\psi^\e\|_\infty+\|\nabla \psi^\e\|_\infty\preceq 1+
\int_1^{{1}/{\e}}\frac{1}{\varphi(r)}\,dr.
\end{equation}
 \end{lemma}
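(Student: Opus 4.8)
The plan is to deduce \eqref{t3-2-3} from Assumption \textbf{(A3)} applied to the right-hand side of \eqref{t3-2-2}, after which the only work left is to bound its sup-norm using the structural description \eqref{e:3.3a} of $\Pi$ on $\{|z|>1\}$.

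First I would check that, with the convention $1/0=\infty$ and $b_{1/0}=b_\infty$, the function
$$f^\e:=-b_{1/\e}-b+\bar b_{1/\e}+\bar b$$
lies in $C(\T^d)$ and has zero $\mu$-mean. Indeed $b\in C_b(\R^d)$ is multivariate $1$-periodic by hypothesis, $b_{1/\e}\in C_b(\R^d)$ is multivariate $1$-periodic for $1/\e\in(1,\infty)$ by the remark following \eqref{e:1.2} (while $b_1\equiv 0$, and $b_\infty$, being the uniform limit of the $b_R$ by \eqref{e:1.3}, is a continuous $1$-periodic function whenever \eqref{e2-1} holds), so $f^\e\in C(\T^d)$; and
$$\mu(f^\e)=-\bar b_{1/\e}-\bar b+\bar b_{1/\e}+\bar b=0.$$
Hence Assumption \textbf{(A3)} applies with $f=f^\e$, producing the unique $1$-periodic $\psi^\e\in\mathscr{D}(\sL)$ solving $\sL\psi^\e=f^\e$ with $\mu(\psi^\e)=0$, and giving $\|\psi^\e\|_\infty+\|\nabla\psi^\e\|_\infty\le C_1\|f^\e\|_\infty$. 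It therefore remains to prove $\|f^\e\|_\infty\preceq 1+\int_1^{1/\e}\varphi(r)^{-1}\,dr$.

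Since $b$ is bounded, $|\bar b|\le\|b\|_\infty$ and $|\bar b_{1/\e}|\le\|b_{1/\e}\|_\infty$, so $\|f^\e\|_\infty\le 2\|b\|_\infty+2\|b_{1/\e}\|_\infty$ and everything reduces to estimating $\|b_{1/\e}\|_\infty$. By \eqref{e:1.2} and boundedness of $k$, $|b_{1/\e}(x)|\le\|k\|_\infty\int_{\{1<|z|\le 1/\e\}}|z|\,\Pi(dz)$. For $\e\in(\tfrac12,1]$ (in particular $\e=1$, where $b_1\equiv 0$) this integral is at most $2\,\Pi(\{1<|z|\le 2\})$, which is finite because $\int_{\R^d}(1\wedge|z|^2)\,\Pi(dz)<\infty$, so $\|b_{1/\e}\|_\infty\preceq 1$. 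For $\e\le\tfrac12$ I split the integral over $\{1<|z|\le 2\}$ and $\{2<|z|\le 1/\e\}$: the first part is again $\le 2\,\Pi(\{1<|z|\le 2\})<\infty$, and for the second, passing to spherical coordinates and using \eqref{e:3.8},
$$\int_{\{2<|z|\le 1/\e\}}|z|\,\Pi(dz)\le\int_2^{1/\e}\frac{\varrho_0(\bS^{d-1})+|\kappa|(r,\bS^{d-1})}{\varphi(r)}\,dr\le\Bigl(\varrho_0(\bS^{d-1})+\sup_{r\ge 2}|\kappa|(r,\bS^{d-1})\Bigr)\int_1^{1/\e}\frac{dr}{\varphi(r)},$$
where $\sup_{r\ge 2}|\kappa|(r,\bS^{d-1})<\infty$ by \eqref{e:3.4} and $\int_2^{1/\e}\le\int_1^{1/\e}$ since $\varphi>0$. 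Combining the two cases gives $\|b_{1/\e}\|_\infty\preceq 1+\int_1^{1/\e}\varphi(r)^{-1}\,dr$, hence $\|f^\e\|_\infty\preceq 1+\int_1^{1/\e}\varphi(r)^{-1}\,dr$, which is \eqref{t3-2-3}.

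The argument is mostly bookkeeping once \textbf{(A3)} is invoked; the points that need a little care are the verification that $f^\e$ has zero $\mu$-mean (so that \textbf{(A3)} is applicable) and the isolation of the contribution of $|z|$ near $1$, which is controlled not by $\varphi$ but by the a priori bound $\int_{\R^d}(1\wedge|z|^2)\,\Pi(dz)<\infty$, together with the boundary case $\e$ close to $1$, where truncation makes $b_{1/\e}$ trivially bounded. Finally, when $\int_1^\infty\varphi(r)^{-1}\,dr=\infty$ the asserted bound is vacuous at $\e=0$, which is consistent with the fact that $b_\infty$, and hence $\psi^0$, is then undefined.
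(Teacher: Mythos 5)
Your argument is correct and follows the same route as the paper: verify that the right-hand side of \eqref{t3-2-2} is a mean-zero element of $C(\T^d)$, invoke \textbf{(A3)}, then reduce \eqref{t3-2-3} to bounding $\|b_{1/\e}\|_\infty$ via the representation \eqref{e:3.3a} and condition \eqref{e:3.4}. The only difference is cosmetic but worth noting: the paper compresses the final estimate to $\sup_x|b_{1/\e}(x)|\preceq\int_1^{1/\e}\varphi(r)^{-1}\,dr$, tacitly ignoring that \eqref{e:3.4} controls $|\kappa|(r,\bS^{d-1})$ only for $r\ge r_0>1$; you handle the shell $1<|z|\le 2$ separately via $\Pi(\{1<|z|\le 2\})<\infty$, which is the right way to make that step airtight. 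Your observation about the $\e=0$ case being vacuous when $\int_1^\infty\varphi^{-1}\,dr=\infty$ is also accurate and consistent with how the paper deploys this lemma (Lemma \ref{t3} with $\e=0$ is only used when $\alpha\in(1,2)$ or $\alpha\ge 2$, where \eqref{e2-1} holds).
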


 \begin{proof} According to \eqref{e:3.8},
$$
  \sup_{x\in \R^d}|b_{1/\e}(x)|
\preceq \int_{\{1<|z|\le 1/{\e}\}}|z|\,\Pi (dz) \preceq
\int_1^{{1}/{\e}}\frac{1}{\varphi(r)}\,dr.
$$
This along with the fact that $b(x)\in C_b(\R^d)$ and {\bf(A3)} yields the desired assertion.
\end{proof}

Now, we are in a position  to present the

\begin{proof}[Proof of Theorem $\ref{t3-2}$]
(1) Suppose that Assumption {\bf (A3)} holds. We first assume that the solution $\psi^\e$ of
\eqref{t3-2-2} satisfies that $\mu(\psi^\e)=0$ and also $\psi^\e \in C^2(\T^d)$.
Set $\Phi_\e(x):=x+\e\psi^\e(x/\e )$.
Define
$$
Z_t^\e:=Y_t^\e+\e\psi^\e (X_t^\e/\e  )
=\Phi_\e(X_t^\e)
-\e\varphi(1/\e)(\bar b_{1/\e}+\bar b) t,\quad t\ge0.
$$
For $f\in C_b^2(\R^d)$, define
$$
f_{\e,s}(x):=f\big(x-\e\varphi(1/\e)(\bar b_{1/\e}+\bar b) s\big)
\quad \hbox{and} \quad
F_\e(s,x)=f_{\e,s}(\Phi_\e(x)).
$$
Clearly
$f(Z_t^\e)=F_{\e}(t,X_t^\e)$.
Since $X^\e:=((X_t^\e)_{t\ge 0};
(\Pp_x)_{x\in \R^d})$ is a solution to the martingale problem for
the operator $\sL^\e$, it holds that
for any $x\in \R^d$, $t>0$, $f\in
C_b^3(\R^d)$ and
any stopping time $\tau$,
$$
\Ee_x\big[f(Z_{t\wedge \tau}^\e)\big]=f\big(x+\e\psi^\e(\e^{-1}
x)\big)+\Ee_x\left[\int_0^{t\wedge \tau} \left(\frac{\partial
F_\e}{\partial s}(s,X_s^\e)+\sL^\e
F_\e(s,\cdot)(X_s^\e)\right)\,ds\right].
$$

Note that
$$
\frac{\partial F_\e}{\partial
s}(s,x)=-\e\varphi(1/\e)\left\langle \nabla
f_{\e,s}(\Phi_\e(x)), \bar b_{1/\e}+\bar b\right\rangle.
$$
Applying Lemma \ref{l2-1} with $R=1$ and $M={1}/{\e}$,
we
find that
\begin{align*}
&\sL^\e F_\e (s,\cdot)(x)\\
&= \varphi(1/\e)\,\,\int_{\R^d}\big(f_{\e,s}(\Phi_\e(x+\e
z))-
f_{\e,s}(\Phi_\e(x))-\langle \nabla
(f_{\e,s}(\Phi_\e(\cdot)))(x), \e z\rangle\I_{\{|z|\le 1\}}\big)k(x/\e , z) \,\Pi (dz) \\
&\quad+
\e\varphi(1/\e)\langle \nabla (f_{\e,s}(\Phi_\e(\cdot)))(x),b(x/\e )\rangle \\
&=\varphi(1/\e)\int_{\R^d}\big(f_{\e,s}(\Phi_\e(x+\e
z))- f_{\e,s}(\Phi_\e(x))-\langle\nabla
(f_{\e,s}(\Phi_\e(\cdot)))(x)
,\e z\rangle\I_{\{|z|\le {1}/{\e}\}})k(x/\e , z)\,\Pi (dz) \\
&\quad +\e\varphi(1/\e)\langle \nabla
(f_{\e,s}(\Phi_\e(\cdot)))(x),
b_{1/\e}(x/\e )+
b(x/\e )\big\rangle\\
&=\varphi(1/\e)\int_{\R^d}\big(f_{\e,s}(\Phi_\e(x)+\e
z)-
f_{\e,s}(\Phi_\e(x))-\langle
\nabla f_{\e,s}(\Phi_\e(x)),\e z \rangle\I_{\{|z|\le {1}/{\e}\}} )k(x/\e , z)\,\Pi (dz) \\
&\quad+\e\varphi(1/\e)\bigg\langle \nabla
f_{\e,s}(\Phi_\e(x)),
\int_{\R^d}\big(\psi^\e( x/\e +z )-
\psi^\e(x/\e )-\langle
\nabla \psi^\e(x/\e ), z\rangle\I_{\{|z|\le {1}/{\e}\}} \big)k ( x/\e ,z )\,\Pi (dz) \bigg\rangle\\
&\quad+\e\varphi(1/\e)\big\langle \nabla
f_{\e,s}(\Phi_\e(x\big)),
b_{1/\e}(x/\e )
+b(x/\e )\big\rangle\\
&\quad +\e\varphi(1/\e)\big\langle \nabla
f_{\e,s}(\Phi_\e(x\big)), \nabla \psi^\e(x/\e )\cdot
(
 b_{1/\e}(x/\e )+b(x/\e ))\big\rangle\\
&\quad
+\varphi(1/\e)\int_{\R^d}K_{\e}(x,z)k ( x/\e ,z )\,\Pi (dz) \\
&=\sL^{\e}_{0,x} f_{\e,s} (\Phi_\e(x))+
\e\varphi(1/\e) \langle \nabla
f_{\e,s}(\Phi_\e(x)),(\sL \psi^\e+b_{1/\e}+b)(x/\e )\rangle  +\varphi(1/\e)\int_{\R^d}K_{\e}(x,z)k ( x/\e ,z )\,\Pi (dz) ,
 \end{align*}
where $\sL^\e$
and $\sL_{0,x}^\e$
are defined by \eqref{e:3.7} and \eqref{e:lllsssppp-} respectively,
and
$K_{\e}(x,z)$
satisfies  that
\begin{align*}
&|K_{\e}(x,z)|\\
&\preceq (\e^3\|\nabla^3
f\|_\infty+\e^2\|\nabla^2 f\|_\infty)\big(1+\|\psi^\e\|_\infty+
\|\nabla\psi^\e\|_\infty\big)^3\big(|z|^2\I_{\{|z|\le 1\}}+|z|\I_{\{1<|z|\le
{1}/{\e}\}}+\I_{\{|z|>{1}/{\e}\}}\big) \\
&\quad+\|f\|_\infty \I_{\{|z|>{1}/{\e}\}}.
\end{align*}

According to
all estimates above,  \eqref{t3-2-2}, \eqref{e:3.8} and \eqref{t3-2-3}, we have that
\begin{equation}\label{t3-2-6}
\begin{split}
\frac{\partial F_\e}{\partial s}(s,x)+\sL^\e F_\e(s,\cdot)(x)
&=\sL^{\e}_{0,x}f_{\e,s}(\Phi_\e(x))\\
&\quad +\e\varphi(1/\e) \big\langle \nabla
f_{\e,s}(\Phi_\e(x)), \big(\sL \psi^\e+b_\e+b-\bar
b_\e-\bar b\big)(x/\e )
\big\rangle\\
&\quad +\varphi(1/\e)\int_{\R^d}K_{\e}(x,z)k ( x/\e ,z )\,\Pi (dz) \\
&=\sL^{\e}_{0,x}f_{\e,s}(\Phi_\e(x))
+H_\e(x),
\end{split}
\end{equation}
where
\begin{equation}\begin{split}\label{t3-2-6-1}
\sup_{x\in \R^d}|H_{\e} (x)| &\preceq \e^2\varphi(1/\e) \Big(\sum_{i=2}^3\|\nabla^i
f\|_\infty\Big)\!\!
\left(1\!+\int_1^{{1}/{\e}}\frac{1}{\varphi(r)}\,dr\right)^3\!\!\left(1\!+\int_{1}^{1/ \e}\frac{1}{\varphi(r)}\,dr
\!+\int_{{1}/{\e}}^\infty
\frac{1}{r\varphi(r)}\,dr\right)\\
&\quad
+\varphi(1/\e) \|f\|_\infty \int_{{1}/{\e}}^\infty\frac{1}{r\varphi(r)}\,dr .
\end{split}
\end{equation}

\medskip

(2) For any $l\ge1$, let $f_l$ be the function defined by \eqref{e:funcOO}. Then, for any $x,y\in \R^d$,
\begin{equation}\label{t3-2-7a}
\begin{split}
 |\sL_{0,x}^{\e}f_l(y)|
&\le \varphi(1/\e) \left|\int_{\{|z|\le
{1}/{\e}\}}\big(f_l(y+\e z)-f_l(y)-\langle \nabla f_l(y), \e z
\rangle \big)
k( x/\e ,z )\,\Pi (dz)
\right|\\
&\quad +\varphi(1/\e)\left|\int_{\{{1}/{\e}<|z|\le {l}/{\e}\}}\big(f_l(y+\e z)-f_l(y)\big)k( x/\e ,z )\,\Pi (dz)\right|\\
&\quad  +\varphi(1/\e) \left|\int_{\{|z|>
{l}/{\e}\}}\big(f_l(x+\e z)-f_l(x)\big) k( x/\e ,z )\,\Pi (dz)
\right| \\
 &\preceq
\|\nabla^2 f_l\|_\infty\varphi(1/\e)\e^2 \int_{\{|z|\le
{1}/{\e}\}}|z|^2 \,\Pi (dz)
 +\|\nabla f_l\|_\infty\e\varphi(1/\e)\int_{\{{1}/{\e}<|z|\le {l}/{\e}\}}|z|\,\Pi (dz)\\
&\quad+\| f_l\|_\infty\varphi(1/\e)
\int_{\{|z|> {l}/{\e}\}} \, \Pi (dz) \\
&\preceq l^{-2}\e^2\varphi(1/\e)\left(1+\int_1^{{1}/{\e}}
\frac{r}{\varphi(r)}\,dr\right)+l^{-1}\e\varphi(1/\e)\int_{{1}/{\e}}^{{l}/{\e}}\frac{1}{\varphi(r)}\,dr\\
&\quad
+\varphi(1/\e)\int_{{l}/{\e}}^\infty\frac{1}{r\varphi(r)}\,dr.
\end{split}
\end{equation}

Let $F_{\e,l}(t,x):=f_l\big(\Phi_\e(x)- \e\varphi(1/\e)(\bar
b_{1/\e}+\bar b) t \big)$. Then, combining \eqref{t3-2-6}, \eqref{t3-2-6-1} with
\eqref{t3-2-7a}, we find that
\begin{equation}\label{t3-2-7}\begin{split}
&\sup_{x\in \R^d}\sup_{s>0}\Big|\frac{\partial F_{\e,l}}{\partial s}(s,x)+\sL^\e F_{\e,l}(s,\cdot)(x)\Big|\\
&\preceq
l^{-2}\e^2\varphi(1/\e)\left(1+\int_1^{{1}/{\e}}\frac{r}{\varphi(r)}\,dr+
\left(\int_1^{{1}/{\e}} \frac{1}{\varphi(r)}\,dr\right)^4+
\left(\int_{1/\e}^\infty\frac{1}{r\varphi(r)}dr\right)^4\right)\\
&\quad +
l^{-1}\e\varphi(1/\e)\left(\e+ \int_{{1}/{\e}}^{{l}/{\e}}\frac{1}{\varphi(r)}\,dr +\e\left(\int_1^{{1}/{\e}}\frac{1}{\varphi(r)}\,dr\right)^4+ \e\left(\int_{1/\e}^\infty\frac{1}{r\varphi(r)}dr\right)^4\right)\\
&\quad+\varphi(1/\e)\int_{1/\e}^\infty\frac{1}{r\varphi(r)}\,dr.
\end{split}\end{equation}
Note that in the arguments above for \eqref{t3-2-7} we need that $\psi^\e \in C^2(\T^d)$. For general
$\psi^\e\in \mathscr{D}(\sL)$ satisfying \eqref{t3-2-2}, there exists a sequence of function
$\{\psi^\e_k\}_{k\ge1}\subset C^2(\T^d)$ such that
$\mu(\psi^\e_k)=0$ for all $k\ge1$ and
$$
\lim_{k \to \infty}\sup_{x\in \T^d} (\big|\psi^\e_k(x)-\psi^\e(x)\big|+\big|\sL \psi^\e_k(x)-\sL \psi^\e(x)\big|)=0.
$$
This along with \eqref{a3-1-2} yields that $$
\sup_{k\ge 1}\Big(\|\psi^\e_k\|_\infty+\|\nabla \psi^\e_k\|_\infty\Big)\preceq 1+\int_1^{{1}/{\e}}\frac{1}{\varphi(r)}\,dr.
$$
For the arguments above, we have used the facts that $\sL\psi_k^\e\in C(\T^d)$ with $\mu(\sL\psi^\e_k)=0$ for all $k\ge1$ and the solution to
\eqref{a3-1-1}
with $f=\sL \psi^\e_k$
is unique.
By a standard approximation procedure, it is not difficult to verify that \eqref{t3-2-7}
still holds true for every $\psi^\e\in \mathscr{D}(\sL)$ satisfying \eqref{t3-2-2}.

We now assume that  \eqref{e:3.1} holds with $\alpha=1$.
It follows from \eqref{e:3.1} that
$$\varphi(1/\e)\int_1^{{1}/{\e}}\frac{r}{\varphi(r)}\,dr\preceq \int_{1}^{{1}/{\e}} r(\e r )^{-1-\eta_0}\,dr\preceq \e^{-2}.$$
Estimating other terms in the right hand side of \eqref{t3-2-7}
by  the same way as above, we   get
$$
\lim_{R \to \infty}\sup_{\e\in (0,1)}\sup_{x\in \R^d} \sup_{s>0}\Big|\frac{\partial F_{\e,R}}{\partial s}(s,x)+\sL^\e
F_{\e,R}(s,\cdot)(x)\Big| =0.
$$
Since for every $R>1$ and $T>0$
\begin{align*}
 \Pp_0\Big(\sup_{t\in [0,T]}|Z_t^\e|>R\Big)
&\preceq \Ee f_R(Z_{T\wedge \tau_R^\e}^\e) =\Ee \left[\int_0^{T\wedge \tau_R^\e}\Big(\frac{\partial
F_{\e,R}}{\partial s}
(s,X_s^\e)+\sL^\e F_{\e,R}(s,\cdot)(X_s^\e)\Big)\,ds\right]\\
&\preceq T\sup_{x\in \R^d}\sup_{s>0}\left|\frac{\partial
F_{\e,R}}{\partial s}(s,x)+\sL^\e F_{\e,R}(s,\cdot)(x)\right|,
\end{align*}
we  conclude that
\begin{equation}\label{t3-2-9}
\lim_{R \to \infty}\sup_{\e\in (0,1)}\Pp_0
\left(\sup_{t\in [0,T]}|Z_t^\e|>R \right) =0.
\end{equation}

According to the argument for \eqref{t3-2-7a},
we can also obtain that for
every $\theta\in (0,1)$,
\begin{align*}
\sup_{x\in \R^d}\sup_{s>0}\Big|\frac{\partial F_{\e,\theta}}{\partial s}(s,x)+\sL^\e F_{\e,\theta}(s,\cdot)(x)\Big|  \preceq & \theta^{-3}
\e^2\varphi(1/\e)\bigg(1+\int_1^{{1}/{\e}}
\frac{r}{\varphi(r)}\,dr+
\Big(\int_1^{{1}/{\e}}\frac{1}{\varphi(r)}\,dr\Big)^4\bigg)\\
&+ \varphi(1/\e)\int_{1/\e}^{\infty} \frac{1}{r\varphi(r)}\,dr.
\end{align*}
By this estimate and the fact that \eqref{e:3.1} holds with $\alpha=1$, we
have
$$
\sup_{\e\in (0,1)}\sup_{x\in \R^d}\sup_{s>0}\Big|\frac{\partial F_{\e,\theta}}{\partial s}(s,x)+\sL^\e F_{\e,\theta}(s,\cdot)(x)\Big|
\preceq C(\theta)
$$
for some constant $C(\theta)>0$.
This together with the proof of \eqref{t3-2-9} gives us that
for any increasing function $\delta (\e)$ with $\lim_{\e \to 0} \delta (\e)=0$ and stopping
time $\tau$ with $\tau\le T$
\begin{equation}\label{t3-2-10}
\lim_{\e \to 0}\Pp_0\big(|Z_{\tau+\delta(\e)}^\e-Z_{\tau}^\e|>\theta\big)=0.
\end{equation}
Therefore, it follows from  \eqref{t3-2-9} and \eqref{t3-2-10} as
well as \cite[Theorem 1]{A} that the distribution of
$\{Z^\e\}_{\e\in (0,1)}$ is tight
in $\D([0, \infty); \R^d)$.

\medskip

(3)   Let $\{Z^{\e_n}\}_{n\ge1}$ be a sequence of
processes with $\lim_{n \rightarrow \infty}\e_n=0$. There is a
subsequence $\{ Z^{\e_{n_k}}\}_{k\ge1}$ (which will be
still denoted by $\{ Z^{\e_n}\}_{n\ge1}$ below for
simplicity) such that the distribution of $ Z^{\e_n}$ on
$\mathscr{D}([0,\infty;\R^d)$ converges weakly
under the Skorohod topology to a probability measure
$\bar \Pp$ on $\mathscr{D}([0,\infty);\R^d)$. Note that
$$
Y^{\e}_t=Z_t^{\e }+\e\psi^\e (X_t^\e  /\e ),\quad t\ge0.
$$
By \eqref{t3-2-3},  $\lim_{\e \to 0}\e\|\psi^\e\|_\infty=0$. This
implies that the distribution of
$(Y^{\e_n}_t)_{t\ge 0}$ converges weakly
in $\D([0, \infty); \R^d)$
to $\bar \Pp$.
Similar to the part (3) of
the proof for Theorem \ref{t3-1}, it suffices to
verify that  for any subsequence $\{\e_n\}_{n\ge 1}$, the limit
distribution $\bar \Pp$ is the same as that of the solution to the
martingale problem for $\bar \sL$ defined by \eqref{t3-2-1}.

For every $0<s_1<s_2,\cdots<s_k<s\le t$, $f\in C_b^2(\R^d)$ and
$G\in C_b(\R^{dk})$, by
\eqref{t3-2-6},
\begin{align*}
&\Ee \Big[\Big(f(Z_t^{\e})-f(Z_s^{\e})-\int_s^t \Big(\frac{\partial
F_{\e}}{\partial
r}(r,X_r^\e)+\sL^{\e}F_{\e}(r,\cdot)(X_r^\e)\Big)\,dr\Big)
G(Z_{s_1}^\e,\cdots, Z_{s_k}^\e)\Big]\\
&=\Ee_0\Big[\Big(f(Z_t^\e)-f(Z_s^\e)-\int_s^t (\sL_{0,X_r^\e}^{\e}f_{\e,r}(
\Phi_\e(X_r^\e))+H_\e (X_r^\e))\,dr\Big) G(Z_{s_1}^\e,\cdots,
Z_{s_k}^\e)\Big]\\
&=0.
\end{align*}
According to condition \eqref{e:3.1} with $\alpha =1$
and
\eqref{t3-2-6-1},
$\lim_{\e \to
0}\sup_{x\in \R^d}|H_\e(x)|=0.$ Combining this
with \eqref{l2-2-1} further yields
\begin{align*}
\lim_{\e \to 0}\Ee_0\Big[\Big(f(Z_t^\e)-f(Z_s^\e)-\int_s^t
\bar \sL_{0,X_r^\e}^\e f(Z_r^\e)
\,dr\Big)
G\big(Z_{s_1}^\e,\cdots, Z_{s_k}^\e\big)\Big]=0.
\end{align*}
Note that, by tracking the proof of Lemma \ref{l2-4}, we can verify that if \eqref{e:cond} holds with
$(X_t^\e)_{t\ge 0}$ replaced by $(Z_t^\e)_{t\ge 0}$, then for every bounded continuous function
$F:\T^d\times \R^d \to \R$ and $0<s<t$,
\begin{align*}
\lim_{\e \to 0}\Ee_x\left[\left|\int_s^t F\left(X_r^\e/\e,Z_r^\e\right)dr-\int_s^t \bar F\left(Z_r^\e\right)dr\right|\right]=0,
\end{align*}
where $\bar F$ is the same function as in Lemma \ref{l2-4}.
Using this property and \eqref{t3-2-10}, we can follow the argument for \eqref{e:I1} to
obtain
\begin{align*}
\lim_{\e \to 0} \Ee_0\left[\left|\int_s^t \bar \sL_{0,X_r^\e}^\e f(Z_r^\e)\,dr- \int_s^t \bar\sL
f(Z_r^\e)\,dr\right|^2\right]=0.
\end{align*}
Hence,
$$
\lim_{\e \to 0}\Ee_0\left[\left(f(Z_t^\e)-f(Z_s^\e)-\int_s^t \bar  \sL
f(Z_r^\e)\,dr\right) G(Z_{s_1}^\e,\cdots, Z_{s_k}^\e)\right]=0,
$$
where $\bar \sL$ is defined in \eqref{t3-2-1}.
Notice further that the distribution of $\{Z^{\e_n}\}_{n\ge1}$ converges
weakly to $\bar \Pp$.
According to the proof of Theorem \ref{t3-1} (in particular, by applying the Skorohod representation theorem),
letting $\e=\e_n$ and taking $\e_n \to 0$ in the equation above give
us
$$
\bar \Ee\Big[\Big(f(Z_t)-f(Z_s)-\int_s^t \bar  \sL  f(Z_r) \,dr\Big) G
(Z_{s_1},\cdots, Z_{s_k} )\Big]=0,
$$
where $(Z_t)_{t\ge 0}$ denotes the coordinate process on
$\mathscr{D}([0,\infty);\R^d)$, and $\bar \Ee$ denotes the expectation
with respect to $\bar \Pp$. This implies that the distribution of
$\bar\Pp$ is a solution to the martingale problem for the L\'evy
operator $\bar \sL$.
By now we have finished the proof for the
assertion (i) of Theorem \ref{t3-2}.

\medskip

(4)   Next,  we assume that condition \eqref{e:3.1} holds with $\alpha \in (1, 2)$.
In this case,
$b_\infty(x)$ is well defined.  According to Assumption
{\bf (A3)}, let $\psi\in \mathscr{D}(\sL)$
be the unique solution to the following
equation
$$
\sL \psi(x)=-b_\infty(x)-b(x)+\bar b_\infty+\bar b,\quad x\in \T^d
$$
with $\mu(\psi)=0$.
By the approximation argument as in Step (2),
without loss of generality we can suppose that $\psi\in C^2(\T^d)$. For every $f\in C_b^3(\R^d)$,  define
$$F_\e(s,x):=f (x+\e\psi(x/\e )-\e\varphi(1/\e)(\bar
b_\infty+\bar b) s ).$$ Applying Lemma \ref{l2-1}
with $R=1$ and $M=\infty$, and following the same arguments for
\eqref{t3-2-6} and \eqref{t3-2-7}, we obtain
$$
\frac{\partial F_\e}{\partial s}(s,x)+\sL^\e F_\e(s,\cdot)(x)= \sL_{1,x}^\e
f_{\e,s}(\Phi_\e(x))+H_\e(x),
$$
where $\Phi_\e(x)=x+\e\psi(x/\e )$,
$f_{\e,s}(x)=f\big(x-\e\varphi(1/\e)(\bar b_\infty+\bar b)s\big)$,
and $H_\e$ satisfies
that
\begin{align*}
\sup_{x\in \R^d}|H_\e(x)| &\preceq \Big(\sum_{i=2}^3\|\nabla^i
f\|_\infty\Big)\varphi(1/\e)\e^2
\left(\int_{\{|z|\le 1\}}|z|^2 \,\Pi (dz) +\int_{1}^{\infty}\frac{r}{\varphi(r)}\,dr\right)\\
&\preceq \left(\sum_{i=2}^3\|\nabla^i
f\|_\infty\right)\varphi(1/\e)\e^2.
\end{align*}
In particular, $$\lim_{\e \to 0}\sup_{x\in \R^d}|H_\e(x)|=0.$$ Using
these estimates and repeating the proof for the assertion (i) (in
particular, applying \eqref{l2-2-2} instead of
\eqref{l2-2-1}),
we obtain the assertion (ii) of Theorem \ref{t3-2}.
\end{proof}

\section{Homogenization: diffusive scaling}\label{section4}

In this section, we treat the case that the jumping measure for the non-local operator $\sL$
of \eqref{e4-1}  has
a finite second moment, i.e.,
\begin{equation}\label{a2-2-1}
\int_{\R^d}|z|^2\,\Pi (dz) <\infty.
\end{equation}
Under this condition, it is natural to conjecture that, after
appropriate scaling, $X$ would converge to Brownian motion. Thus
we will take the scaling function $\rho (r)=r^2$ in \eqref{Sca}, and
consider the limit of the scaled process $X^\e=(X_t^\e)_{t\ge
0}:=(\e X_{t/\e^{2}})_{t\ge 0}$. Here is the main result of this
section.

\begin{theorem}\label{t3-3}
Suppose that  Assumption ${\bf (A3)}$ and \eqref{a2-2-1} hold. Let
$$Y_t^\e:=X_t^\e-(\bar b_\infty +\bar b) t/\e= \e\big(X_{t/\e^2}-(\bar
b_\infty+\bar b) t/\e^2),\quad t\ge0.$$ Then $(Y_t^\e)_{t\ge 0}$ converges weakly
in $\D([0, \infty); \R^d)$,
as $\e \to 0$, to
Brownian motion
with the covariance matrix $A$ given by
$$
A:=\int_{\T^d}\int_{\R^d}\big(z+\psi(x+z)-\psi(x)\big)\otimes
\big(z+\psi(x+z)-\psi(x)\big)k(x,z)\,\Pi (dz)
\,\mu(dx),
$$
where $\psi\in \mathscr{D}(\sL)$
is
the unique
solution to the following
equation
\begin{equation}\label{function-1}
\sL \psi(x)=-b_\infty(x)-b(x)+\bar b_\infty+\bar b,\quad x\in \T^d
\end{equation}
such that $\mu(\psi)=0$.
\end{theorem}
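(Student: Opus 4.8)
The plan is to follow the corrector-plus-ergodic-averaging scheme already developed for Theorems \ref{t3-1} and \ref{t3-2}, with one new feature: in the diffusive regime the scaling function is $\rho(r)=r^2$, so $\e^2\rho(1/\e)\equiv 1$ and the fluctuation of the corrector is \emph{not} negligible; it is precisely what turns the naive covariance $\int_{\R^d}z\otimes z\,k(x,z)\,\Pi(dz)$ into the homogenized matrix $A$. First, since $\int_{\R^d}(1\wedge|z|^2)\,\Pi(dz)<\infty$ together with \eqref{a2-2-1} gives $\int_{\{|z|>1\}}|z|\,\Pi(dz)<\infty$, the function $b_\infty$ of \eqref{e:1.3} is bounded and $\sL f(x)=\int_{\R^d}(f(x+z)-f(x)-\langle\nabla f(x),z\rangle)k(x,z)\,\Pi(dz)+\langle b(x)+b_\infty(x),\nabla f(x)\rangle$; likewise $\sL^\e$ of \eqref{e:2.2} (with $\rho(1/\e)=\e^{-2}$) can be rewritten with drift $\e^{-1}\langle b(x/\e)+b_\infty(x/\e),\nabla f(x)\rangle$ and untruncated jump part. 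By Assumption {\bf (A3)} let $\psi\in\mathscr{D}(\sL)$ be the unique solution of \eqref{function-1} with $\mu(\psi)=0$; then $\|\psi\|_\infty+\|\nabla\psi\|_\infty<\infty$, and since $|z+\psi(\cdot+z)-\psi(\cdot)|^2\preceq|z|^2+(1\wedge|z|^2)$ is $\Pi$-integrable, $A$ is a finite symmetric nonnegative-definite matrix. Set $\Phi_\e(x):=x+\e\psi(x/\e)$ and
$$Z_t^\e:=\Phi_\e(X_t^\e)-(\bar b_\infty+\bar b)t/\e=Y_t^\e+\e\psi(X_t^\e/\e),\qquad t\ge0.$$
As $\e\|\psi\|_\infty\to0$, it suffices to prove that $Z^\e$ converges weakly in $\D([0,\infty);\R^d)$ to the stated Brownian motion.

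The key step is the generator computation for $Z^\e$. For $f\in C_b^3(\R^d)$ put $f_{\e,s}(x):=f(x-(\bar b_\infty+\bar b)s/\e)$ and $F_\e(s,x):=f_{\e,s}(\Phi_\e(x))$, so $f(Z_t^\e)=F_\e(t,X_t^\e)$. Applying Lemma \ref{l2-1} with $M=\infty$ and a free parameter $R$, using \eqref{function-1} to cancel the $O(\e^{-1})$ drift contributions against $\partial_sF_\e$ exactly as in the proof of Theorem \ref{t3-2}(ii), but now \emph{keeping} the $\nabla^2f$-term because $\e^2\rho(1/\e)=1$, I expect to obtain
\begin{align*}
\frac{\partial F_\e}{\partial s}(s,x)+\sL^\e F_\e(s,\cdot)(x)
&=\sL_{1,x}^\e f_{\e,s}(\Phi_\e(x))\\
&\quad+\frac12\int_{\R^d}\big\langle\nabla^2f_{\e,s}(\Phi_\e(x)),\,2\,\Theta_\e(x,z)\otimes z+\Theta_\e(x,z)\otimes\Theta_\e(x,z)\big\rangle k(x/\e,z)\,\Pi(dz)+H_\e(x),
\end{align*}
where $\sL_{1,x}^\e g(y):=\e^{-2}\int_{\R^d}(g(y+\e z)-g(y)-\langle\nabla g(y),\e z\rangle)k(x/\e,z)\,\Pi(dz)$, $\Theta_\e(x,z):=\psi(x/\e+z)-\psi(x/\e)$, and $\sup_x|H_\e(x)|\to0$ (taking $R$ large and then $\e\to0$, using \eqref{a2-2-1} to control the tails of $\Pi$). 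Taylor-expanding $\sL_{1,x}^\e f_{\e,s}(\Phi_\e(x))$ in $\e$ — legitimate because $\int_{\R^d}|z|^2\,\Pi(dz)<\infty$ — and combining it with the $\Theta_\e$-terms into a perfect square then gives, uniformly in $x$,
\begin{align*}
\frac{\partial F_\e}{\partial s}(s,x)+\sL^\e F_\e(s,\cdot)(x)&=G\big(x/\e,Z_s^\e\big)+o_\e(1),\\
G(x,y)&:=\frac12\int_{\R^d}\big\langle\nabla^2f(y),\,(z+\psi(x+z)-\psi(x))^{\otimes2}\big\rangle k(x,z)\,\Pi(dz),
\end{align*}
with $\overline G(y):=\int_{\T^d}G(x,y)\,\mu(dx)=\tfrac12\langle\nabla^2f(y),A\rangle$. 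When $\psi$ is only in $\mathscr{D}(\sL)$ I would first run this computation for a sequence $\psi_k\in C^2(\T^d)$ with $\mu(\psi_k)=0$ and $\sL\psi_k\to\sL\psi$ uniformly, keeping $\|\psi_k\|_\infty+\|\nabla\psi_k\|_\infty$ bounded by \eqref{a3-1-2}, and then let $k\to\infty$, exactly as in Step (2) of the proof of Theorem \ref{t3-2}.

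Next comes tightness and identification of the limit. Feeding the cutoffs $f_l$ of \eqref{e:funcOO} into the identity above (with $F_{\e,l}(s,x):=f_l(\Phi_\e(x)-(\bar b_\infty+\bar b)s/\e)$) gives $\sup_{\e\in(0,1)}\sup_{x,s}|\partial_sF_{\e,l}(s,x)+\sL^\e F_{\e,l}(s,\cdot)(x)|\preceq l^{-2}$, uniformly in $\e$ by \eqref{a2-2-1}; with the stopping times $\tau_l^\e$, \eqref{t3-1-1a} then yields $\lim_{R\to\infty}\sup_\e\Pp_0(\sup_{t\le T}|Z_t^\e|>R)=0$ and $\lim_{\e\to0}\Pp_0(|Z_{\tau+\delta(\e)}^\e-Z_\tau^\e|>\theta)=0$ for every stopping time $\tau\le T$ and $\delta(\e)\downarrow0$, whence $\{Z^\e\}_{\e\in(0,1)}$ is tight in $\D([0,\infty);\R^d)$ by \cite[Theorem 1]{A}; in particular \eqref{e:cond} holds for $Z^\e$. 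Along any subsequence with $Z^{\e_n}$ converging weakly to some $\bar\Pp$, I would invoke Skorohod's representation ($\wt Z^n\to\wt Z$ a.s.\ in $\D([0,\infty);\R^d)$), the martingale property of $X^{\e_n}$, the identity of the previous paragraph, and Lemma \ref{l2-4}(ii) applied to $Z^\e$ (valid since \eqref{e:cond} holds) to replace, in the limit, $\int_s^t\big(\partial_rF_{\e_n}(r,X_r^{\e_n})+\sL^{\e_n}F_{\e_n}(r,\cdot)(X_r^{\e_n})\big)\,dr$ by $\int_s^t\overline G(\wt Z_r)\,dr=\tfrac12\int_s^t\langle\nabla^2f(\wt Z_r),A\rangle\,dr$. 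This shows $\bar\Pp$ solves the well-posed martingale problem for $f\mapsto\tfrac12\langle\nabla^2f,A\rangle$, hence $\bar\Pp$ is the law of a Brownian motion with covariance $A$; being subsequence-independent, it is the weak limit of $Y^\e$.

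The main obstacle is the generator computation: one must carefully verify that the $\nabla^2f$-terms produced by the corrector in Lemma \ref{l2-1} — which are discarded in the subdiffusive cases of Theorem \ref{t3-2} precisely because there $\e^2\rho(1/\e)\to0$ — here survive, assemble with the leading part of $\sL_{1,x}^\e$ into the perfect square $(z+\psi(x+z)-\psi(x))^{\otimes2}$, and that all remainder terms (both the $H_\e$ coming from $G_\e$ in Lemma \ref{l2-1} with $M=\infty$ and the Taylor remainder of $\sL_{1,x}^\e$) vanish uniformly in $x$; this is where the second-moment assumption \eqref{a2-2-1} and the $C^2$-approximation of the non-smooth corrector $\psi\in\mathscr{D}(\sL)$ are genuinely used. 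The ergodic averaging (Lemma \ref{l2-4}) along the path of $Z^\e$ then performs the homogenization, upgrading $G(x/\e,\cdot)$ to its $\mu$-average $\tfrac12\langle\nabla^2(\cdot),A\rangle$.
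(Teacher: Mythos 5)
Your proposal follows the paper's proof essentially step by step: the same corrector $\psi$ solving \eqref{function-1}, the same transform $Z_t^\e=\Phi_\e(X_t^\e)-\e^{-1}(\bar b_\infty+\bar b)t$, the same application of Lemma \ref{l2-1} with $M=\infty$ followed by the cancellation of the $O(\e^{-1})$ drift against $\partial_sF_\e$ via \eqref{function-1}, the same assembly of $\sL_{1,x}^\e$ (via Lemma \ref{l2-3}) and the $\Theta_\e$-terms into the perfect square $\big(z+\Theta_\e(x,z)\big)^{\otimes 2}$, the same $C^2$-approximation of $\psi\in\mathscr{D}(\sL)$, and the same tightness/Skorohod/ergodic-averaging (Lemma \ref{l2-4}) argument that the paper says to borrow from Theorems \ref{t3-2} and \ref{t3-4}. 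The only minuscule wrinkle is the order of limits for killing the Taylor remainder $G_{1,\e}$ of Lemma \ref{l2-3} (one should let $\e\to 0$ first and then $R\to\infty$, not the reverse), but this does not affect the argument.
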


\begin{remark}\label{r4-1}
Let
$\{e_i: 1\leq i\leq d \} $
be the standard orthonormal basis of $\R^d$. We claim that if the process $X$ is irreducible,
and for each $e_i$, $1\le i \le d$, there exists a sequence $\{z_k^i\}_{k\ge1} \subset
{\rm supp}[\Pi]$
such that $z_k^i\neq 0$ for all $k\ge1$ and
 \begin{equation}\label{r4-1-1}
\lim_{k\to \infty}z_k^i=0, \qquad \lim_{k \to \infty}z_k^i/|z_k^i|=e_i,\quad 1\le i\le d,
\end{equation}
then the covariance matrix $A$ in Theorem \ref{t3-3} above is non-degenerate.
Indeed, for any $\xi\in \R^d$,
$$\langle A\xi, \xi\rangle= \int_{\T^d}\int_{\R^d}\langle z+\psi(x+z)-\psi(x),\xi\rangle^2\,\Pi (dz) \,\mu(dx).$$ Since the process $X$ is irreducible, for any $t>0$, $x\in \R^d$ and open set $U\subset \R^d$, $\Pp_x(X_t\in U)>0$. Then, $\mu(U)=\int_{\R^d}\Pp_x(X_t\in U)\,\mu(dx)>0$; that is, ${\rm supp}[\mu]=\T^d$.

Now, assume  that for some $0\neq \xi\in\R^d$,
$\langle A\xi, \xi\rangle=0$. Note that, under {\bf (A3)}, by the dominated convergence theorem, $x\mapsto \int_{\R^d}\langle z+\psi(x+z)-\psi(x),\xi\rangle^2\,\Pi (dz) $ is a continuous function. This along with the fact ${\rm supp}[\mu]=\T^d$ yields that  for every $x\in \T^d$, $$\int_{\R^d}\langle z+\psi(x+z)-\psi(x),\xi\rangle^2\,\Pi (dz) =0.$$
Without loss of generality, we assume that $\xi=(\xi_{(1)},\cdots,\xi_{(d)})$ with $\xi_{(1)}>0$ (since $\xi \neq 0$).
Let $\{z_k^1\}_{k\ge1}$ be the sequence in the assumptions above. Then, we have
$$\langle\psi(x+z_k^1)-\psi(x),\xi\rangle=\langle -z_k^1,\xi\rangle,\quad  k\ge 1, x\in \T^d.$$ By the mean value theorem and the fact that $\|\nabla \psi\|_\infty<\infty$ (see Lemma \ref{t3} with $\e=0$), we
have
$$\langle  \nabla \langle \psi(x) , \xi\rangle,z_k^1/|z_k^1|\rangle + o(|z_k^1|)/|z_k^1|=\langle z_k^1/|z_k^1|,-\xi\rangle,\quad  k\ge 1, x\in \T^d.$$
Letting $k \to \infty$ and using \eqref{r4-1-1}, we
obtain
\begin{equation*}
\partial_{x_1}\langle \psi(x),\xi\rangle=-\xi_{(1)}<0,\quad  x\in \T^d,
\end{equation*}
which obviously contradicts with the fact that $x\mapsto \langle \psi(x),\xi\rangle$ is continuous and
multivariate periodic.
Therefore, we have $\langle A\xi,\xi\rangle>0$ for every $\xi\neq 0$, and so
$A$ is non-degenerate.

We further note that the assumptions above, which guarantee that the covariance matrix $A$ in Theorem \ref{t3-3} is non-degenerate, are weak in some sense. For example, let $d=2$, $\Pi (dz) =\delta_{e_1}(dz)$, and $\xi=e_2$. Then, for any $x\in \T^d$, since $\psi$ is  multivariate periodic,
$$\int_{\R^2}\langle z+\psi(x+z)-\psi(x),\xi\rangle^2\,\Pi (dz) =(\psi_{(2)}(x+e_1)-\psi_{(2)}(x))^2= 0$$ and so $\langle A\xi, \xi\rangle=0$, where we write $\psi(x)=(\psi_{(1)}(x),\psi_{(2)}(x))$. Hence, the associated covariance matrix $A$ in Theorem \ref{t3-3} is degenerate.
\end{remark}

 Note that the scaled process $X^\e$ is  a strong Markov process, whose generator is given by
\begin{align*}
\sL^\e f(x)&= \e^{-2}
\int_{\R^d}\big(f(x+\e z)-f(x)-\e\langle \nabla f(x),z\rangle\I_{\{|z|\le 1\}}\big)k( x/\e ,z )\,\Pi (dz) + \e^{-1}
\langle \nabla f(x), b(x/\e )\rangle\\
&= :\sL_1^\e f(x)+ \e^{-1} \langle \nabla f(x), b_\infty(x/\e)+b(x/\e )\rangle.
\end{align*}
Here, $b_\infty(x)$ is defined by \eqref{e3}, and
\begin{equation}\label{e6-}
 \sL_1^\e f(x)= \e^{-2}\int_{\R^d}\big(f(x+\e z)-f(x)-\langle \nabla f(x), \e z\rangle \big)k( x/\e ,z )\,
\Pi (dz) .
\end{equation}

\begin{lemma}\label{l2-3}
For every $x\in \R^d$ and $f\in C_b^3(\R^d)$, define
\begin{equation}\label{e6}
 \sL_{1,x}^\e f(y)= \e^{-2}\int_{\R^d}\big(f(y+\e z)-f(y)-\langle \nabla f(y), \e z\rangle \big)k( x/\e ,z )\,
\Pi (dz) .
\end{equation} Then,
\begin{equation}\label{9no1}
 \sL_{1,x}^\e f(y)=\Big\langle \nabla^2 f(y), \frac{1}{2}\int_{\R^d}(z\otimes z) k( x/\e ,z )\,\Pi (dz)
 \Big\rangle
+G_{1,\e}(x,y),
\end{equation}
where $G_{1,\e}(x,y)$ satisfies
\begin{equation}\label{e:l2-3-1}\lim_{\e \to 0}\sup_{x,y\in \R^d}|G_{1,\e}(x,y)|=0.\end{equation}
\end{lemma}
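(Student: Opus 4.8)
The plan is to expand $f$ to second order around $y$, recognise the Hessian term as the main term of \eqref{9no1}, and show that the remainder tends to $0$ uniformly in $x$ and $y$ using only the finite second moment bound \eqref{a2-2-1}. Set $g(y,w):=f(y+w)-f(y)-\langle\nabla f(y),w\rangle$. Taylor's theorem gives $|g(y,w)|\le\tfrac12\|\nabla^2 f\|_\infty|w|^2$, so $\sL_{1,x}^\e f(y)$ is well defined: indeed $\e^{-2}\int_{\R^d}|g(y,\e z)|\,k(x/\e,z)\,\Pi(dz)\le\tfrac12\|k\|_\infty\|\nabla^2 f\|_\infty\int_{\R^d}|z|^2\,\Pi(dz)<\infty$ by \eqref{a2-2-1}, and likewise $\int_{\R^d}(z\otimes z)\,k(x/\e,z)\,\Pi(dz)$ is well defined. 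For $f\in C^3_b(\R^d)$ we have the sharper expansion
$$g(y,w)=\tfrac12\langle\nabla^2 f(y),w\otimes w\rangle+h(y,w),$$
where the remainder satisfies both $|h(y,w)|\le C\|\nabla^3 f\|_\infty|w|^3$ (the third-order Taylor estimate, with $C=C(d)$) and $|h(y,w)|\le\|\nabla^2 f\|_\infty|w|^2$, the latter because $|h(y,w)|\le|g(y,w)|+\tfrac12|\langle\nabla^2 f(y),w\otimes w\rangle|$.

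Substituting $w=\e z$ and using $(\e z)\otimes(\e z)=\e^2(z\otimes z)$, the quadratic part of $g$ contributes exactly $\langle\nabla^2 f(y),\tfrac12\int_{\R^d}(z\otimes z)\,k(x/\e,z)\,\Pi(dz)\rangle$, so \eqref{9no1} holds with
$$G_{1,\e}(x,y):=\e^{-2}\int_{\R^d}h(y,\e z)\,k(x/\e,z)\,\Pi(dz).$$
To prove \eqref{e:l2-3-1}, I would bound $|k|\le\|k\|_\infty$ and use the two estimates for $h$ to get, for any $M>0$,
$$|G_{1,\e}(x,y)|\le C\|k\|_\infty\int_{\R^d}\Big(\|\nabla^3 f\|_\infty\,\e|z|^3\Big)\wedge\Big(\|\nabla^2 f\|_\infty|z|^2\Big)\,\Pi(dz),$$
and then, using $\e|z|^3\le\e M|z|^2$ on $\{|z|\le M\}$ and the quadratic bound on $\{|z|>M\}$,
$$|G_{1,\e}(x,y)|\le C\|k\|_\infty\Big(\|\nabla^3 f\|_\infty\,\e M\int_{\{|z|\le M\}}|z|^2\,\Pi(dz)+\|\nabla^2 f\|_\infty\int_{\{|z|>M\}}|z|^2\,\Pi(dz)\Big).$$
This estimate is uniform in $x,y$. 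Given $\eta>0$, first choose $M$ so large that $\int_{\{|z|>M\}}|z|^2\,\Pi(dz)<\eta$ (possible by \eqref{a2-2-1} and dominated convergence), then choose $\e$ so small that $\e M\int_{\R^d}|z|^2\,\Pi(dz)<\eta$; for such $\e$ one gets $\sup_{x,y\in\R^d}|G_{1,\e}(x,y)|\le C\|k\|_\infty(\|\nabla^2 f\|_\infty+\|\nabla^3 f\|_\infty)\eta$, which yields \eqref{e:l2-3-1}.

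The only delicate point is that \eqref{a2-2-1} provides merely a second moment of $\Pi$, not a third, so the naive bound $\e^{-2}\cdot\e^3|z|^3$ on the third-order Taylor remainder is not directly $\Pi$-integrable; the remedy is to interpolate between the second- and third-order remainder estimates for $h$ and to split the integral at a radius $M$, exploiting that the tail $\int_{\{|z|>M\}}|z|^2\,\Pi(dz)$ vanishes as $M\to\infty$. Beyond this I do not anticipate any obstacle; note in particular that Lemma \ref{l2-1} is of no help here, since taking $\psi\equiv0$ there makes the statement a tautology, so a fresh second-order expansion is genuinely needed.
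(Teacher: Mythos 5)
Your proposal is correct and follows essentially the same route as the paper: a second-order Taylor expansion of $f$ around $y$ with the quadratic term giving the main contribution, and a control of the remainder by interpolating between a cubic bound on a ball $\{|z|\le M\}$ and a quadratic bound on its complement, with the tail killed by \eqref{a2-2-1}. The paper phrases this by splitting the Taylor expansion at a radius $R$ (third order inside, second order outside) and passing to the iterated limit $\e\to0$, then $R\to\infty$, whereas you take a single expansion, derive the two competing bounds $\min(\e|z|^3,|z|^2)$ on the remainder, and argue in an $\eta$-$M$-$\e$ order; these are logically the same argument. Your final observation that Lemma~\ref{l2-1} is vacuous with $\psi\equiv 0$ is also accurate.
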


\begin{proof}
According to the Taylor expansion, for any $f\in C_b^3(\R^d)$ and $R>1$,
\begin{align*}
&f(y+\e z)-f(y)-\langle \nabla f(y), \e z\rangle\\
&=\begin{cases}{\e^2}\langle \nabla^2 f(y), z\otimes z\rangle/{2}
+{\e^3} \langle \nabla^3 f(y+\theta_1\e z), z\otimes z\otimes z\rangle/{6},&\quad |z|\le R,\\
{\e^2}\langle \nabla^2 f(y+\theta_2 \e z), z\otimes z\rangle/{2}, &\quad |z|>R\end{cases}\\
&= {\e^2}\langle \nabla^2 f(y), z\otimes z\rangle/{2} +H_{\e}(y,z),
\end{align*}
where $\theta_1, \theta_2\in (0,1)$ and
\begin{align}\label{eq:enoo3nx}
|H_{\e}(y,z)|\preceq \big(\|\nabla^2 f\|_\infty+\|\nabla^3
f\|_\infty\big) \big(\e^3|z|^3\I_{\{|z|\le
R\}}+\e^2|z|^2\I_{\{|z|>R\}}\big).
\end{align}
In particular,  \eqref{9no1} holds with
$$
G_{1,\e}(x,y)= \varepsilon^{-2}  \int_{\R^d}H_{\e}(y,z)k( x/\e ,z )\,\Pi (dz) .
$$

By \eqref{eq:enoo3nx}, it holds that
$$|G_{1,\e}(x,y)|\le C_1\left(\sum_{i=2}^3\|\nabla^i f\|_\infty\right)\left(\e\int_{\{|z|\le R\}}|z|^3\,\Pi (dz) +
\int_{\{|z|>R\}}|z|^2\,\Pi (dz) \right)$$
with any $R>0$ and some $C_1>0$ $($which is independent of $f$, $\e$ and $R)$.
Since $\int_{\R^d}|z|^2\,\Pi (dz) <\infty$, first letting $\e \to 0$ and then
$R \to \infty$ in the estimate above,
we obtain \eqref{e:l2-3-1}. \end{proof}

\begin{proof}[Proof of Theorem $\ref{t3-3}$] We first assume that
$\psi\in C^2(\T^d)$.
Let
$$
Z_t^\e=Y_t^\e+\e\psi (  X_t^\e/\e )=X_t^\e+\e\psi ( X_t^\e/\e  )- \e^{-1} (\bar
b_\infty+\bar b) t,\quad t\ge0.
$$
Recall that the generator of the process $X^\e$ is
\begin{align*}\sL^\e
f(x)=
& \sL_1^\e f(x)+  \e^{-1}\langle \nabla f(x), b_\infty(
x/\e)+b(x/\e )\rangle,
\end{align*}
where $\sL_1^\e f$ is defined by \eqref{e6-}. Then, for every $f\in
C_b^3(\R^d)$, $x\in \R^d$ and $t>0$,
$$
\Ee_x[f(Z_t^\e)]=\Ee_x[F_\e(t,X_t^\e)]=f(\Phi_\e(x))+\Ee_x\left[\int_0^t \Big(\frac{\partial
F_\e}{\partial s}(s,X_s^\e)+\sL^\e
F_\e(s,\cdot)(X_s^\e)\Big)\,ds\right],$$
where $F_{\e}(s,x)=f(\Phi_\e(x)-\e^{-1}(\bar b_\infty +\bar
b)s)$ and  $\Phi_\e(x)=x+\e\psi(x/\e )$.

Let
$f_{\e,s}(x)=f(x-\e^{-1}(\bar b_\infty +\bar b) s)$. Then, $F_\e(s,x)= f_{\e,s}(\Phi_\e(x)).$ Set $\Theta_\e(x,z):=\psi( x/\e +z )-\psi(x/\e )$ and
\begin{align*}
A(x):=\frac{1}{2}\int_{\R^d}\big(z+\psi(x+z)-\psi(x)\big)\otimes\big(z+\psi(x+z)-\psi(x)\big)k(x,z)\,\Pi (dz) .
\end{align*}
Applying Lemma \ref{l2-1} with $M=\infty$,
$R\to 0$, and using
\eqref{function-1} and Lemma \ref{l2-3}, we can verify that
\begin{align*}
&  \frac{\partial F_\e}{\partial s}(s,x)+\sL^\e F_\e(s,\cdot)(x)\\
&= -\e^{-1}\langle \nabla f_{\e,s} (\Phi_\e(x)), \bar b_\infty+\bar b\rangle
 + \sL_{1,x}^\e f_{\e,s}(\Phi_\e(x))\\
 &\quad + \e^{-1} \Big\langle \nabla f_{\e,s} (\Phi_\e(x)),\!\int\big( \psi(x/\e  +z )\!-\! \psi(x/\e  )\!-\!\nabla \psi(x/\e ) \cdot z\big)k( x/\e ,z )\,\Pi (dz) \Big\rangle\\
&\quad + \frac{1}{2}\Big\langle
\nabla^2 f_{\e,s}(\Phi_\e(x)), \int
\big(2\Theta_\e(x,z)\otimes z+\Theta_\e(x,z)\otimes \Theta_\e(x,z)\big)k( x/\e ,z )\,\Pi (dz) \Big\rangle \\
&\quad +H_1^\e(x)+ \e^{-1} \langle \nabla f_{\e,s} (\Phi_\e(x)), b_\infty (x/\e )+b(x/\e )\rangle +\e^{-1} \langle \nabla f_{\e,s} (\Phi_\e(x)), \nabla\psi(x/\e  )\cdot(b_\infty(x/\e )+b(x/\e ))\rangle  \\
&= \sL_{1,x}^\e f_{\e,s}(\Phi_\e(x))+ \frac{1}{2}\Big\langle \nabla^2 f_{\e,s}(\Phi_\e(x)),
\int\!
\big(2\Theta_\e(x,z)\otimes z\!+\!\Theta_\e(x,z)\otimes \Theta_\e(x,z)\big)k( x/\e ,z )\,\Pi (dz) \Big\rangle + H_1^\e(x)\\
&=\frac{1}{2}\Big\langle \nabla^2 f_{\e,s}\big(\Phi_\e(x)\big), \int
\!\big(z+\Theta_\e(x,z)\big)\!\otimes\! \big(z+\Theta_\e(x,z)\big)
k( x/\e ,z )\,\Pi (dz) \Big\rangle\!
+\!H_2^\e(x)\\
&=\langle \nabla^2 f_{\e,s} (\Phi_\e(x) ), A(x/\e)\rangle+H_2^\e(x),
\end{align*} where in the first and the second equalities $\sL_{1,x}^\e$ is defined by \eqref{e6}.
Here $H_1^\e$ satisfies
$$
|H_1^\e(x)|\preceq  \e \left(\sum_{i=2}^3\|\nabla^i f\|_\infty \right)\bigg(\int_{\{|z|\le R\}}|z|^2\,\Pi (dz) + \int_{\{|z|>R\}}|z|\,\Pi (dz) \bigg),
$$ thanks to the fact that $\|\psi\|_\infty+\|\nabla \psi\|_\infty \preceq 1$ under Assumption {\bf (A3)} (see Lemma \ref{t3} with $\e=0$), and $H_2^\e(x)=H_1^\e(x)+G_{1,\e}(x)$ with $G_{1,\e}$ as in Lemma \ref{l2-3}.
As explained in the proof of Theorem \ref{t3-2}
the above estimate still holds true when $\psi\in \mathscr{D}(\sL)$.

Given these estimates, the rest of the proof is very similar to that of Theorem \ref{t3-2}, so we omit it
(see also the proof of Theorem \ref{t3-4} below).
\end{proof}

\section{Homogenization: critical cases}\label{section5}
Throughout this section, $\phi$ is a strictly positive and strictly decreasing
function on $\R_+$ so that $\lim_{r \to0}\phi(r)=\infty$,
$\lim_{r\to 0} r^{-2}\phi(r)^{-1}=\infty$,
\begin{equation}\label{a2-3-2} \lim_{\e \to
0}\left(\frac{\e\int_{\{|z|\le 1/\e\}}|z|^3 \,\Pi (dz) }
{\phi(\e)}+
\frac{\int_{\{|z|>1/\e\}}|z|\, \Pi (dz) }{\e\phi(\e)}\right)=0,
\end{equation} and
 \begin{equation}\label{a2-3-1-00}
\limsup_{\e \to 0}\frac{ \int_{\{|z|\le 1/\e\}}|z|^2 \,\Pi (dz) }{\phi(\e)}<\infty
\end{equation} with
 \begin{equation}\label{a2-3-1}
A:=\lim_{\e \to 0}\frac{\int_{\T^d}\int_{\{|z|\le 1/\e\}} (z\otimes z) k(x,z)\,\Pi (dz) \,\mu(dx)}{\phi(\e)}
\end{equation}
being a
non-zero $d\times d $-matrix.

We make four remarks on the assumptions above.
\begin{itemize}
\item[(i)] Since the matrix $A$ is
non-zero and $\lim_{\e \to0} \phi(\e)=\infty$, $\int_{\R^d}  |z|^2\,\Pi (dz) $ has to be infinite.
\item[(ii)] In \eqref{a2-3-2}, \eqref{a2-3-1-00} and \eqref{a2-3-1}, the domain $\{|z|\le 1/\e\}$ can be replaced by $\{r_0\le |z|\le 1/\e\}$ for any fixed $r_0\ge 1$.
\item[(iii)] For $\I_{\{|z|\ge1\}} \Pi (dz) =\I_{\{|z|\ge1\}}|z|^{-(d+\alpha)}\,dz$ with $\alpha\in (0,2)$, condition
\eqref{a2-3-1-00} holds with $\phi(\e)=\e^{\alpha-2}$ but condition
\eqref{a2-3-2} fails.
\item[(iv)] For $\I_{\{|z|\ge1\}}\Pi (dz) =\I_{\{|z|\ge1\}}|z|^{-d-2}\,dz$, conditions
\eqref{a2-3-2} and \eqref{a2-3-1-00} are satisfied with
$\phi(\e)=\log (1+1/\e).$\end{itemize}

Under \eqref{a2-3-2} and \eqref{a2-3-1-00}, we will take
$\rho(\e)=\e^2/\phi(1/\e)$
in \eqref{Sca}, which corresponds
to the scaling function for critical cases in the setting of
infinite second moments. The purpose of this section is to study the
limit behavior of the scaled process $X^\e:=(X^\e_t)_{t\ge 0}$
 defined by $X^\e_t=\e X_{\e^{-2}\phi(\e)^{-1}t}$ for any $t>0$.

\begin{theorem}\label{t3-4}
Suppose that  Assumption ${\bf (A3)}$, \eqref{a2-3-2},
\eqref{a2-3-1-00} and \eqref{a2-3-1} hold. Let
$$Y_t^\e:=X_t^\e-\e^{-1}\phi(\e)^{-1}(\bar b_\infty+\bar b) t=
\e\big(X_{\e^{-2}\phi(\e)^{-1}t}-\e^{-2}\phi(\e)^{-1}(\bar b_\infty+\bar
b) t\big),\quad t\ge0.$$   Then,  $(Y_t^\e)_{t\ge 0}$  converges weakly
in $\D([0, \infty); \R^d)$,
 as $\e \to 0$, to
 Brownian motion with the non-zero covariance matrix $A$ defined by \eqref{a2-3-1}.
\end{theorem}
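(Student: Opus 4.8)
The plan is to follow the scheme used for Theorem~\ref{t3-3} (which itself follows Theorem~\ref{t3-2}), adapted to the scaling factor $\rho(1/\e)=\e^{-2}\phi(\e)^{-1}$, which tends to infinity by the standing hypothesis $\lim_{r\to0}r^{-2}\phi(r)^{-1}=\infty$. The essential new feature compared with Theorem~\ref{t3-3} is that here $\int_{\R^d}|z|^2\,\Pi(dz)=\infty$, so the truncation at $\{|z|\le 1/\e\}$ must be retained throughout, exactly as in the $\alpha=1$ case of Theorem~\ref{t3-2}; conditions \eqref{a2-3-2} and \eqref{a2-3-1-00} are precisely what render the ensuing error terms negligible, while \eqref{a2-3-1} produces the limiting covariance. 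First I would record that $\Pi(\{|z|>1\})<\infty$ (a consequence of $\int_{\R^d}(1\wedge|z|^2)\,\Pi(dz)<\infty$) and, by \eqref{a2-3-2}, that $\int_{\{|z|>1\}}|z|\,\Pi(dz)<\infty$; hence $b_\infty$ of \eqref{e3} is well defined, continuous and $1$-periodic, and Assumption~{\bf (A3)} provides a unique $1$-periodic $\psi\in\mathscr{D}(\sL)$ solving the corrector equation \eqref{function-1}, i.e.\ $\sL\psi=-b_\infty-b+\bar b_\infty+\bar b$ on $\T^d$ with $\mu(\psi)=0$, and satisfying $\|\psi\|_\infty+\|\nabla\psi\|_\infty<\infty$. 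As in the proof of Theorem~\ref{t3-2}, I would first treat the case $\psi\in C^2(\T^d)$, the general case following by a routine approximation.

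Next I would set $\Phi_\e(x):=x+\e\psi(x/\e)$, $Z_t^\e:=Y_t^\e+\e\psi(X_t^\e/\e)=\Phi_\e(X_t^\e)-\e^{-1}\phi(\e)^{-1}(\bar b_\infty+\bar b)t$, and for $f\in C_b^3(\R^d)$ put $f_{\e,s}(x):=f\big(x-\e^{-1}\phi(\e)^{-1}(\bar b_\infty+\bar b)s\big)$ and $F_\e(s,x):=f_{\e,s}(\Phi_\e(x))$, so that $f(Z_t^\e)=F_\e(t,X_t^\e)$ and the martingale problem for $\sL^\e$ yields, for every stopping time $\tau$, $\Ee_x[f(Z_{t\wedge\tau}^\e)]=f(\Phi_\e(x))+\Ee_x\int_0^{t\wedge\tau}\big(\partial_s F_\e+\sL^\e F_\e(s,\cdot)\big)(X_s^\e)\,ds$. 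The core of the argument is then to show, applying Lemma~\ref{l2-1} with $M=1/\e$ (and a fixed small $R$), rewriting $\I_{\{|z|\le1\}}=\I_{\{|z|\le1/\e\}}-\I_{\{1<|z|\le1/\e\}}$ in $\sL^\e$, using the corrector equation, and invoking the bound $\e^{-1}\phi(\e)^{-1}\|b_{1/\e}-b_\infty\|_\infty\to0$ coming from \eqref{a2-3-2}, that all terms of order $\e^{-1}\phi(\e)^{-1}$ cancel and that
$$\partial_s F_\e(s,x)+\sL^\e F_\e(s,\cdot)(x)=\frac{1}{2\phi(\e)}\Big\langle\nabla^2 f_{\e,s}(\Phi_\e(x)),\int_{\{|z|\le1/\e\}}(z\otimes z)\,k(x/\e,z)\,\Pi(dz)\Big\rangle+H_\e(x),$$
with $\sup_{x\in\R^d}|H_\e(x)|\to0$ as $\e\to0$. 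Here $H_\e$ absorbs: the $b_{1/\e}-b_\infty$ discrepancy (negligible by \eqref{a2-3-2}); the Taylor remainders produced by Lemma~\ref{l2-1}, whose contributions after multiplication by $\e^{-2}\phi(\e)^{-1}$ are dominated by multiples of $\e\phi(\e)^{-1}\int_{\{|z|\le1/\e\}}|z|^3\,\Pi(dz)$, $\e^{-2}\phi(\e)^{-1}\Pi(\{|z|>1/\e\})$ and $\phi(\e)^{-1}$, all of which tend to $0$ by \eqref{a2-3-2} and \eqref{a2-3-1-00}; and the corrector cross terms $\Theta_\e\otimes z$ and $\Theta_\e\otimes\Theta_\e$ with $\Theta_\e(x,z):=\psi(x/\e+z)-\psi(x/\e)$, whose $\Pi$-integrals over $\{|z|\le1/\e\}$ are $O(1)$ (because $|\Theta_\e|\le\|\nabla\psi\|_\infty|z|$ for $|z|\le1$, $|\Theta_\e|\le2\|\psi\|_\infty$ always, and $\int_{\{|z|\le1\}}|z|^2\,\Pi(dz)+\int_{\{|z|>1\}}|z|\,\Pi(dz)<\infty$) and hence $o(\phi(\e))$ --- this is exactly why, in contrast to Theorem~\ref{t3-3}, the corrector contributes nothing to the limiting diffusion matrix.

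Granting this identity, the rest runs parallel to Theorems~\ref{t3-1}--\ref{t3-3}. Applying it to the cutoff functions $f_l$ of \eqref{e:funcOO} (for which the main term is $O(l^{-2})$ uniformly in $\e$ by \eqref{a2-3-1-00}) gives $\lim_{R\to\infty}\sup_{\e}\Pp_0\big(\sup_{t\le T}|Z_t^\e|>R\big)=0$ and $\lim_{\e\to0}\Pp_0\big(|Z_{\tau+\delta(\e)}^\e-Z_\tau^\e|>\theta\big)=0$ for any stopping time $\tau\le T$ and $\delta(\e)\downarrow0$; hence $\{Z^\e\}$ is tight in $\D([0,\infty);\R^d)$ by \cite[Theorem~1]{A}, and in particular \eqref{e:cond} holds with $X^\e$ replaced by $Z^\e$. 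Given a subsequence with $Z^{\e_n}$ converging weakly to some $\bar\Pp$, I would realize it almost surely via the Skorohod representation and, starting from $\Ee_0\big[\big(f(Z_t^\e)-f(Z_s^\e)-\int_s^t(\partial_r F_\e+\sL^\e F_\e(r,\cdot))(X_r^\e)\,dr\big)G(Z_{s_1}^\e,\dots,Z_{s_k}^\e)\big]=0$, substitute the displayed identity. Since $g_\e(y):=\phi(\e)^{-1}\int_{\{|z|\le1/\e\}}(z\otimes z)k(y,z)\,\Pi(dz)$ is, by \eqref{a2-3-1-00} and \eqref{e0}, a uniformly bounded and uniformly equicontinuous family with $\mu(g_\e)\to A$, a repetition of Lemma~\ref{l2-4}(ii) valid for such $\e$-indexed integrands (together with $\sup_x|H_\e(x)|\to0$ and \eqref{e:cond} for $Z^\e$) yields $\int_s^t\tfrac12\langle\nabla^2 f(Z_r^{\e_n}),g_{\e_n}(X_r^{\e_n}/\e_n)\rangle\,dr\to\int_s^t\tfrac12\langle\nabla^2 f(Z_r),A\rangle\,dr$ in $L^1$, so that $\bar\Pp$ solves the martingale problem for $\bar\sL f=\tfrac12\langle\nabla^2 f,A\rangle$; by well-posedness, $\bar\Pp$ is the law of a Brownian motion with covariance $A$, and since $\e\|\psi\|_\infty\to0$ the processes $Y^\e$ and $Z^\e$ share this weak limit. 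I expect the main obstacle to be the generator computation of the second paragraph: carefully tracking each Taylor remainder produced by Lemma~\ref{l2-1} under the $\{|z|\le1/\e\}$ truncation and checking that \eqref{a2-3-2}--\eqref{a2-3-1-00} annihilate every one of them, while confirming that precisely the quantity isolated by \eqref{a2-3-1} survives as the Brownian covariance.
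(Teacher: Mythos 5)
Your proposal is correct and follows essentially the same strategy as the paper's proof: introduce the corrector $\psi$ via {\bf (A3)}, pass to $Z^\e_t=\Phi_\e(X^\e_t)-\e^{-1}\phi(\e)^{-1}(\bar b_\infty+\bar b)t$, expand $\partial_s F_\e+\sL^\e F_\e$ using Lemma~\ref{l2-1} plus a Lemma~\ref{l2-5}-type Taylor expansion to isolate $\langle\nabla^2 f_{\e,s}(\Phi_\e(x)),\phi(\e)^{-1}A_\e(x/\e)\rangle$ with a uniformly vanishing remainder, then run the standard tightness/Skorohod/martingale-problem machinery with the ergodic averaging of Lemma~\ref{l2-4}. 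The only bookkeeping difference is that the paper applies Lemma~\ref{l2-1} with $M=\infty$ and $R=1/\e$ (so the drift term is immediately $b_\infty$ and Lemma~\ref{l2-5} handles the $1/\e$ truncation), whereas you take $M=1/\e$ and $R$ fixed (so the drift appears as $b_{1/\e}$, and you correctly dispose of the $b_{1/\e}-b_\infty$ discrepancy using~\eqref{a2-3-2}); this is equivalent modulo where the truncation is introduced. You are also right to observe that Lemma~\ref{l2-4}(ii) needs to be applied to an $\e$-indexed family $g_\e$, which the uniform bound from~\eqref{a2-3-1-00} and the ergodicity constants of {\bf (A2)} justify; the paper does the same (passing through $A_\e/\phi(\e)$ and then using $A_\e/\phi(\e)\to A$), though it states this less explicitly.
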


For the scaled process
$X^\e=(X_t^\e)_{t\ge 0}:=(\e X_{\e^{-2}\phi(\e)^{-1}t})_{t\ge 0}$ as above,
its infinitesimal generator is given by
\begin{align*}
\sL^\e f(x)&=\frac{1}{\e^2\phi(\e)}
\int_{\R^d}\big(f(x+\e
z)-f(x)-\langle \nabla f(x),\e z\rangle\I_{\{|z|\le 1\}}\big)k( x/\e ,z )\,\Pi (dz) +\frac{1}{\e\phi(\e)}
\left\langle  b(x/\e ), \nabla f(x)\right\rangle\\
&=: \sL_1^\e f(x)+\frac{1}{\e\phi(\e)}
\left\langle  b_\infty(x/\e )+b(x/\e ),  \nabla f(x) \right\rangle,
\end{align*}
where
$$
 \sL_1^\e f(x)=\frac{1}{\e^2\phi(\e)}\int_{\R^d}\big(f(x+\e z)-f(x)-\langle \nabla f(x), \e z\rangle \big)
k( x/\e ,z ) \,\Pi (dz) .
$$

Similar to Lemma \ref{l2-3}, we have the following statement.
\begin{lemma}\label{l2-5} For any $x,y\in \R^d$ and $f\in C_b^3(\R^d)$, let
$$
 \sL_{1,x}^\e f(y)=\e^{-2}\phi(\e)^{-1}\int_{\R^d}\big(f(y+\e z)-f(y)-\langle \nabla f(y), \e z\rangle \big)k( x/\e ,z )\,\Pi (dz) .
$$ Suppose that  \eqref{a2-3-2}  holds. Then, for every $x,y\in \R^d$ and $f \in C_b^3(\R^d)$,
$$
 \sL_{1,x}^\e f(y)=\Big\langle \nabla^2 f(y), \frac{1}{2\phi(\e)}\int_{\{|z|\le 1/\e \}}(z\otimes z) k( x/\e ,z )\,\Pi (dz) \Big\rangle
+G_{2,\e}(x,y),
$$
where $G_{2,\e}(x,y)$ satisfies that $$\lim_{\e \to 0}
\sup_{x,y\in \R^d}|G_{2,\e}(x,y)|=0.$$ \end{lemma}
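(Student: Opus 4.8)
The plan is to follow the proof of Lemma \ref{l2-3} essentially verbatim, the only change being that the fixed truncation radius $R$ used there is replaced by the $\e$-dependent radius $1/\e$. This is the natural scale in the present (critical) setting because here $\int_{\R^d}|z|^2\,\Pi(dz)$ is typically infinite, so one cannot afford to keep a region $\{|z|>R\}$ of fixed size and let $R\to\infty$ at the end; instead one controls the large jumps by a crude bound that is exactly what \eqref{a2-3-2} is designed to absorb. Note first that \eqref{a2-3-2} forces $\int_{\{|z|>1\}}|z|\,\Pi(dz)<\infty$, so $\sL_{1,x}^\e f$ and all the integrals below are well defined.

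The steps I would carry out are as follows. First, Taylor expand the integrand on the two regions. On $\{|z|\le 1/\e\}$, a third-order Taylor expansion with Lagrange remainder gives
$$
f(y+\e z)-f(y)-\langle \nabla f(y),\e z\rangle=\frac{\e^2}{2}\langle \nabla^2 f(y),z\otimes z\rangle+H_\e(y,z),\qquad |H_\e(y,z)|\preceq \|\nabla^3 f\|_\infty\,\e^3|z|^3 .
$$
On $\{|z|>1/\e\}$, the mean value theorem yields the crude bound
$$
\big|f(y+\e z)-f(y)-\langle \nabla f(y),\e z\rangle\big|\le 2\|\nabla f\|_\infty\,\e|z| .
$$
Second, split the defining integral as $\int_{\R^d}=\int_{\{|z|\le 1/\e\}}+\int_{\{|z|>1/\e\}}$; the contribution of the quadratic term $\tfrac{\e^2}{2}\langle\nabla^2 f(y),z\otimes z\rangle$ over $\{|z|\le 1/\e\}$, after multiplication by $\e^{-2}\phi(\e)^{-1}$, is precisely the announced leading term $\langle \nabla^2 f(y),\frac{1}{2\phi(\e)}\int_{\{|z|\le 1/\e\}}(z\otimes z)k(x/\e,z)\,\Pi(dz)\rangle$. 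This defines $G_{2,\e}(x,y)$ as $\e^{-2}\phi(\e)^{-1}$ times the sum of $\int_{\{|z|\le 1/\e\}}H_\e(y,z)k(x/\e,z)\,\Pi(dz)$ and $\int_{\{|z|>1/\e\}}(f(y+\e z)-f(y)-\langle\nabla f(y),\e z\rangle)k(x/\e,z)\,\Pi(dz)$. Third, using the boundedness of $k$ and the two pointwise bounds above,
$$
\sup_{x,y\in\R^d}|G_{2,\e}(x,y)|\preceq \|\nabla^3 f\|_\infty\,\frac{\e\int_{\{|z|\le 1/\e\}}|z|^3\,\Pi(dz)}{\phi(\e)}+\|\nabla f\|_\infty\,\frac{\int_{\{|z|>1/\e\}}|z|\,\Pi(dz)}{\e\phi(\e)},
$$
and both terms on the right vanish as $\e\to0$ by assumption \eqref{a2-3-2}, which is the claim.

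I do not expect a genuine obstacle; the only point requiring a little care is the large-jump region, where one must use $\e|z|>1$ to turn the naive $O(1)$ bound $2\|f\|_\infty$ into the $O(\e|z|)$ bound that is controlled by the second term of \eqref{a2-3-2} (as opposed to $\int_{\{|z|>1/\e\}}\Pi(dz)$, which is not what \eqref{a2-3-2} supplies, and a second-order Taylor bound $\e^2|z|^2$ there would be useless since $\int|z|^2\,\Pi(dz)=\infty$). Everything else is the routine Taylor estimate already performed in the proof of Lemma \ref{l2-3}.
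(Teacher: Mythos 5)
Your proof is correct and follows essentially the same route as the paper's: a third-order Taylor expansion on $\{|z|\le 1/\e\}$, a mean-value-theorem bound of order $\e|z|$ on $\{|z|>1/\e\}$, and then an appeal to \eqref{a2-3-2} to kill both pieces of $G_{2,\e}$. Your remark about why the crude $O(1)$ bound (or a quadratic bound) on the large-jump region would not be controlled by \eqref{a2-3-2} is exactly the right observation, and matches the paper's treatment.
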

\begin{proof}
According to the Taylor expansion, for any $f\in C_b^3(\R^d)$ and
$\e\in (0,1)$,
\begin{align*}
&f(x+\e z)-f(x)-\langle \nabla f(x), \e z\rangle\\
&=\begin{cases}\frac{\e^2}{2}\langle \nabla^2 f(x), z\otimes z\rangle
+\frac{\e^3}{6} \langle \nabla^3 f(x+\theta_1\e z), z\otimes z\otimes z\rangle,&\quad |z|\le 1/\e ,\\
\langle \nabla f(x+\theta_2 \e z), \e z\rangle-\langle \nabla f(x), \e z\rangle, &\quad |z|>1/\e \end{cases}\\
&= \frac{\e^2}{2}\langle \nabla^2 f(x), z\otimes z\rangle \I_{\{|z|\le 1/\e \}}+H_{\e}(x,z),
\end{align*} where $\theta_1,\theta_2\in (0,1)$ and
$$
|H_{\e}(x,z)| \preceq \big(\|\nabla f\|_\infty+\|\nabla^3
f\|_\infty\big)\big( \e^3|z|^3\I_{\{|z|\le
1/\e \}}+\e|z|\I_{\{|z|>1/\e \}}\big).
$$
In particular, we have
$$
 \sL_{1,x}^\e f(y)=\Big\langle \nabla^2 f(y), \frac{1}{2\phi(\e)}\int_{\{|z|\le 1/\e\}} (z\otimes z ) k( x/\e ,z )\,\Pi (dz) \Big\rangle
+G_{2,\e}(x,y),
$$
where
$$ G_{2,\e}(x,y)=\e^{-2}\phi(\e)^{-1}\int H_\e(y,z)k( x/\e ,z )\,\Pi (dz) .$$ Furthermore,
\begin{equation}\label{l2-5-1}
 |G_{2,\e}(x,y)|\le C_1\left(\sum_{i=1}^3\|\nabla^i f\|_\infty\right)\left(\frac{\e\int_{\{|z|\le 1/ {\e}\}}|z|^3\,\Pi (dz) }{\phi(\e)}
+\frac{\int_{\{|z|>1/{\e}\}}|z|\,\Pi (dz) }{\e\phi(\e)}\right)
\end{equation}
with some $C_1>0$ independent of $f$ and $\e$.
By \eqref{a2-3-2},
we can further obtain that
$$\lim_{\e \to 0}\sup_{x\in \R^d}|G_{2,\e}(x)|\preceq \lim_{\e \to 0}\left(
\frac{\e \int_{\{|z|\le 1/ {\e} \}}|z|^3\, \Pi (dz) }{\phi(\e)}
+\frac{\int_{\{|z|>1/\e \}}|z|\,\Pi (dz) }{\e\phi(\e)}\right)=0.
$$  The proof is finished.
\end{proof}

\begin{proof}[Proof of Theorem $\ref{t3-4}$] (1) Let $\psi\in \mathscr{D}(\sL)$
be the unique solution to the following equation
$$
\sL \psi(x)=-b_\infty(x)-b(x)+\bar b_\infty+\bar b,\quad  x\in \T^d
$$
with $\mu(\psi)=0$.
Let $$Z_t^\e =Y_t^\e+\e\psi (  X_t^\e/\e )=X_t^\e+\e\psi (  X_t^\e/\e   )-\e^{-1}\phi(\e)^{-1}
(\bar b_\infty+\bar b) t,\quad t\ge0.$$
As explained in the proof of Theorem \ref{t3-2}, without of loss generality we can assume that $\psi\in C^2(\T^d)$.
Noticing that $((X_t^\e)_{t\ge 0};(\Pp_x)_{x\in \R^d})$ is a solution to the martingale problem for the operator $\sL^\e$, we obtain that for every $f\in C_b^3(\R^d)$, $x\in \R^d$ and $t>0$,
$$
\Ee_x[f(Z_t^\e)]=f(\Phi_\e(x))+\Ee_x\left[\int_0^t
\Big(\frac{\partial F_\e}{\partial s}(s,X_s^\e)+\sL^\e
F_\e(s,\cdot)(X_s^\e)\Big)\,ds\right],
$$
where $F_{\e}(s,x)=f\big(\Phi_\e(x)-\e^{-1}\phi(\e)^{-1}(\bar b_\infty+\bar b)s\big)$ and $\Phi_\e(x)=x+\e\psi(x/\e )$.

Let $f_{\e,s}(x)=f\big(x-\e^{-1}\phi(\e)^{-1}(\bar b_\infty+\bar b) s\big)$, $\Theta_\e(x,z)=\psi( x/\e +z )-\psi(x/\e )$, and
$$
A_\e(x) =\frac{1}{2}\int_{\{|z|\le 1/{\e} \}}(z\otimes z)k(x,z)\,\Pi (dz) .
$$Applying Lemma \ref{l2-1} with $R=1/{\e}$ and $M=\infty$, and using Lemma \ref{l2-5}
and \eqref{l2-5-1},
we can verify that
\begin{align*}
&\frac{\partial F_\e}{\partial s}(s,x)+\sL^\e F_\e(s,\cdot)(x)\\
&= \sL_{1,x}^\e f_{\e,s} (\Phi_\e(x) )\\
&\quad +\frac{1}{2\phi(\e)}\Big\langle \nabla^2 f_{\e,s} (\Phi_\e(x)
), \int\!
 (2\Theta_\e(x,z)\otimes  z \!+\Theta_\e(x,z)\otimes \Theta_\e(x,z) )k( x/\e ,z )\,\Pi (dz) \Big\rangle
+H_1^\e(x)\\
&=\frac{1}{2}\Big\langle
\nabla^2 f_{\e,s} (\Phi_\e(x) ), \frac{1}{\phi(\e)}\int_{\{|z|\le 1/{\e} \}}(z\otimes z)
k( x/\e ,z )\,\Pi (dz) \Big\rangle
+H_2^\e(x)\\
&= \Big\langle
\nabla^2 f_{\e,s} (\Phi_\e(x) ), \frac{1}{\phi(\e)}A_\e (x/\e )\Big\rangle+H_2^\e(x),
\end{align*}
where $H_1^\e$
and $H_2^\e$ satisfy
\begin{align*}
  |H_i^\e(x)|
 &\preceq \Big(\sum_{i=1}^3\|\nabla^i f\|_\infty \Big)  \Bigg[\frac{\e(\int_{\{0<|z|\le 1\}}|z|^2 \,\Pi (dz) +\int_{\{1<|z|\le  1/{\e}\}}|z|^3 \,\Pi (dz) )}{\phi(\e)}\\
&\qquad\qquad\qquad\qquad\,\,\,\, +\frac{\int_{\{|z|> 1/{\e} \}}|z| \,\Pi (dz) }{\e\phi(\e)}+\frac{1}{\phi(\e)}\bigg(\int_{\{|z|\le 1\}}|z|^2\,\Pi (dz) +\int_{\{|z|>1\}}|z|\,\Pi (dz) \bigg)\Bigg].
\end{align*}
In particular, by \eqref{a2-3-2}, $\lim_{\e \to 0}
 {\phi(\e)}=\infty$ and $\int_{\{|z|>1\}}|z|\,\Pi (dz) <\infty$,  it holds that $$\lim_{\e \to 0}\sup_{x\in \R^d} |H_2^\e(x)|=0.$$

(2)  For any $l\ge1$, let $f_l$ be the function defined by \eqref{e:funcOO}, and  $F_{\e,l}(s,x)=f_l\big(\Phi_\e(x)-\e^{-1}\phi(\e)^{-1}(\bar b_\infty+\bar b) s\big)$.  According to all the estimates above and  $\sup_{\e\in (0,1)}\sup_{x\in \R^d}\frac{|A_\e(x)|}{\phi(\e)}<\infty$
 (which is due to \eqref{a2-3-1-00} and the boundedness of $k(x,z)$), we can get
$$
\lim_{R \to \infty}\sup_{\e\in (0,1), x\in \R^d, s>0}
\left|\frac{\partial F_{\e,R}}{\partial s}(s,x)+\sL^\e F_{\e,R}(s,\cdot)(x)\right|=0
$$ and
$$
\sup_{\e\in (0,1), x\in \R^d, s>0}
\left|\frac{\partial F_{\e,\theta}}{\partial s}(s,x)+\sL^\e F_{\e,\theta}(s,\cdot)(x)\right|\le C(\theta),\quad \theta\in (0,1).
$$
Thus, following the proof of Theorem \ref{t3-2}, we can obtain that $\{Z^\e\}_{\e\in (0,1)}$ is tight
in $\D ([0, \infty); \R^d)$.

(3) Recall that the generator of  the process $X^\e$ is $\sL^\e$, and again that
$$F_{\e}(s,x)=f\big(x+\e\psi(x/\e)-\e^{-1}\phi(\e)^{-1}(\bar b_\infty+\bar b)s\big),\quad f_{\e,s}(x)=f\big(x-\e^{-1}\phi(\e)^{-1}(\bar b_\infty+\bar b) s\big).$$
 For every $0<s_1<s_2,\cdots<s_k<s\le t$,
$f\in C_b^3(\R^d)$ and $G\in C_b(\R^{dk})$,
\begin{align*}
&\Ee_0\Big[\Big(f(Z_t^{\e})-f(Z_s^{\e})-\int_s^t
\Big(\frac{\partial F_{\e}}{\partial r}(r,X_r^\e)+\sL^{\e}F_{\e}(r,\cdot)(X_r^\e)\Big)\,dr\Big)
G (Z_{s_1}^\e,\cdots, Z_{s_k}^\e )\Big]\\
&=\Ee_0\bigg[\Big(f(Z_t^\e)-f(Z_s^\e)-\int_s^t
\Big(\Big\langle\nabla^2 f_{\e,r} ( \Phi_\e(X_r^\e)  ),\frac{1}{\phi(\e)}A_\e( X_r^\e /\e)\Big\rangle+H_2^\e (X_r^\e)\Big)\,dr\Big)G\Big(Z_{s_1}^\e,\cdots, Z_{s_k}^\e\Big)\bigg]\\
&\to 0 \qquad \hbox{as } \e \to 0.
 \end{align*}
 Let
 $$
 A_\e=\int_{\T^d}A_\e(x)\,\mu(dx)=\int_{\T^d}\int_{\{|z|\le {\e}^{-1}\}}(z\otimes z)k(x,z)\,dz\,\mu(dx).
 $$
Then, following the argument
in \eqref{e:I1},
and using Lemma \ref{l2-4} and the fact that  $$\sup_{\e\in (0,1)}\sup_{x\in \R^d}\frac{|A_\e(x)|}{\phi(\e)}<\infty,$$ we get that
\begin{align*}
\lim_{\e \to 0}\Ee_0\Bigg[\bigg|\int_s^t \!\!\Big\langle\nabla^2
f_{\e,r} ( \Phi_\e (X_r^\e ) ), \, &\frac{1}{\phi(\e)}A_\e\left(
X_r^\e/\e \right)\Big\rangle \,dr- \int_s^t \Big\langle\nabla^2 f_{\e,r} ( \Phi_\e (X_r^\e )
),\frac{A_\e}{\phi(\e)}\Big\rangle\, dr\bigg|^2\Bigg]=0.
\end{align*}
Hence, putting all estimates together,  we
obtain
$$
\lim_{\e \to 0}
\Ee_0\Big[\Big(f(Z_t^\e)-f(Z_s^\e)-\int_s^t
\Big\langle\nabla^2 f(Z_r^\e),\frac{A_\e}{\phi(\e)}\Big\rangle \,dr\Big)
G (Z_{s_1}^\e,\cdots, Z_{s_k}^\e )\Big]=0.
$$
Given this, the fact that $\lim_{\e \to 0}\frac{A_\e}{\phi(\e)}=A$ and the tightness of $\{Z^\e\}_{\e\in (0,1)}$
in $\D([0, \infty); \R^d)$,
one can follow the proof of Theorem \ref{t3-2} to get the desired assertion.
\end{proof}

\section{Sufficient conditions for averaging assumption \eqref{e:3.6}}\label{S:6}

In this section,  we present some sufficient conditions for the key averaging assumption \eqref{e:3.6},
which is needed  for the proof of the assertions in  Example \ref{ex4-1}
and the two additional examples in Subsection \ref{S:7.1}.
The main results of this section are Theorems \ref{T:6.5} and \ref{T:6.7}.

\medskip

Let $\Pi_0(dz)$ be defined by \eqref{e:3.2a}. Let $k(x, z)$ be a
non-negative
bounded function on $\R^d\times \R^d$
so that $x\mapsto k(x,z)$ is
multivariate 1-periodic
for each fixed $z\in \R^d$ and condition \eqref{e0} holds. We will represent $z$ in the spherical coordinate $(r, \theta)$  with $r=|z|$ and $\theta = z/|z|$,
and will write $k(x, z)$ as $k(x, (r, \theta))$ as well.

\begin{proposition}\label{l6-1}
Suppose
that for every $x\in \R^d$ and $\varrho_0$-a.e.\ $\theta \in \bS^{d-1}$, there is a constant  $\bar k(x, \theta)$ so that   \begin{equation}\label{e:2a}
    \lim_{T\to \infty} \frac1T \int_0^{T}  k(x, (r, \theta))  \,dr =  \bar k(x, \theta).
  \end{equation}
  Then for any
bounded function $f:\R^d\times \R^d  \rightarrow \R$ satisfying
\eqref{e:3.4b}  and for $0<r\le R$,
\begin{equation}\label{e:8}
 \lim_{\e \to 0} \sup_{x\in \R^d} \Big| \int_{\{r\le |z|\le R\}}f(x,z)
 \left( k(x/\e  ,z/\e)  - \bar k (x/\e  ,z/|z|) \right)\,\Pi_0 (dz) \Big| =0.
\end{equation}
\end{proposition}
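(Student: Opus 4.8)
The plan is to pass to spherical coordinates $z=(s,\theta)$ with $s=|z|$, use the uniform equicontinuity \eqref{e:3.4b} to replace $f$ by a step function in the radial variable, and then invoke a \emph{uniform in the base point} Ces\`aro averaging property of $k$. With \eqref{e:3.2a}, the quantity inside $\sup_x|\cdot|$ in \eqref{e:8} equals
\[
\int_{\bS^{d-1}}\int_r^R f(x,(s,\theta))\bigl(k(x/\e,(s/\e,\theta))-\bar k(x/\e,\theta)\bigr)\,\frac{ds}{s^{1+\alpha}}\,\varrho_0(d\theta).
\]
Fix $\eta>0$. Since $s\mapsto s^{-1-\alpha}$ is uniformly continuous on $[r,R]$ and, by \eqref{e:3.4b}, $\sup_x|f(x,(s_1,\theta))-f(x,(s_2,\theta))|$ is small when $|s_1-s_2|$ is small, we may pick a partition $r=a_0<a_1<\cdots<a_m=R$ so fine that $\sup_x|f(x,(s,\theta))-f(x,(a_j,\theta))|<\eta$ and $|s^{-1-\alpha}-a_j^{-1-\alpha}|<\eta$ for all $s\in I_j:=[a_j,a_{j+1}]$ and all $\theta$. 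Because $|k-\bar k|\le 2\|k\|_\infty$, replacing $f(x,(s,\theta))$ by $f(x,(a_j,\theta))$ and $s^{-1-\alpha}$ by $a_j^{-1-\alpha}$ on each $I_j$ changes the integral by at most $C\eta$, where $C$ depends only on $\|f\|_\infty$, $\|k\|_\infty$, $r$, $R$, $\varrho_0(\bS^{d-1})$ and is independent of $x$ and $\e$. Thus it suffices to show that the finite sum obtained after these replacements tends to $0$ as $\e\to0$, uniformly in $x$.

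The crux is the following uniform averaging statement: for $\varrho_0$-a.e.\ $\theta$, the limit $\bar k(y,\theta)=\lim_{T\to\infty}T^{-1}\int_0^T k(y,(\sigma,\theta))\,d\sigma$ exists for \emph{every} $y\in\R^d$ and moreover $\sup_{y\in\R^d}\bigl|T^{-1}\int_0^T k(y,(\sigma,\theta))\,d\sigma-\bar k(y,\theta)\bigr|\to0$ as $T\to\infty$. To see this, observe that by \eqref{e0} and $1$-periodicity the map $y\mapsto k(y,\cdot)$ factors through the compact torus $\T^d$ and is continuous into $\bigl(B_b(\R^d),\|\cdot\|_\infty\bigr)$; hence its range $\{k(y,\cdot):y\in\R^d\}$ is totally bounded in sup-norm. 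Given $\eta>0$, fix a finite $\eta$-net $y_1,\dots,y_N$ and let $N_\eta$ be the ($\varrho_0$-null) set of $\theta$ for which \eqref{e:2a} fails at some $y_i$. For $\theta\notin N_\eta$, comparing an arbitrary $y$ to a net point $y_i$ with $\|k(y,\cdot)-k(y_i,\cdot)\|_\infty<\eta$ and applying \eqref{e:2a} at $y_i$ shows that, for all large $T,T'$, $|T^{-1}\int_0^T k(y,(\sigma,\theta))\,d\sigma-T'^{-1}\int_0^{T'} k(y,(\sigma,\theta))\,d\sigma|<4\eta$ for every $y$; so the Ces\`aro averages are uniformly Cauchy in $T$, and letting $T'\to\infty$ gives a uniform limit within $4\eta$. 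Taking $\eta=1/\ell$, $\ell\to\infty$, and $N:=\bigcup_\ell N_{1/\ell}$ (still $\varrho_0$-null) yields the claim; note this also makes $\bar k(x/\e,z/|z|)$ in \eqref{e:8} meaningful for $\varrho_0$-a.e.\ direction.

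To finish, set $F_{y,\theta}(T)=\int_0^T k(y,(\sigma,\theta))\,d\sigma$, so that
\[
\int_{I_j}\bigl(k(y,(s/\e,\theta))-\bar k(y,\theta)\bigr)\,ds=\bigl(\e F_{y,\theta}(a_{j+1}/\e)-a_{j+1}\bar k(y,\theta)\bigr)-\bigl(\e F_{y,\theta}(a_j/\e)-a_j\bar k(y,\theta)\bigr).
\]
Since $a_j\ge r>0$ gives $a_j/\e\to\infty$, the uniform averaging statement yields $\sup_{y}\bigl|\int_{I_j}(k(y,(s/\e,\theta))-\bar k(y,\theta))\,ds\bigr|\to0$ as $\e\to0$ for every $\theta\notin N$, while this quantity is always at most $2\|k\|_\infty|I_j|$. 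Hence, writing $y=x/\e$ in the finite sum isolated above, its $j$-th summand is bounded in absolute value by $r^{-1-\alpha}\|f\|_\infty\int_{\bS^{d-1}}\sup_y\bigl|\int_{I_j}(k(y,(s/\e,\theta))-\bar k(y,\theta))\,ds\bigr|\,\varrho_0(d\theta)$, which tends to $0$ by dominated convergence (the integrand tends to $0$ $\varrho_0$-a.e.\ and is dominated by the constant $2\|k\|_\infty(R-r)$), uniformly in $x$; summing the finitely many $j$ gives the conclusion of the previous paragraph. Therefore $\limsup_{\e\to0}$ of the left side of \eqref{e:8} is at most $C\eta$, and letting $\eta\to0$ completes the proof. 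The main obstacle is the uniform averaging statement: the merely pointwise hypothesis \eqref{e:2a} must be upgraded to uniformity in the base point, precisely because that base point is $x/\e$, which sweeps out the whole torus as $\e\to0$, and this is exactly where the sup-norm continuity hypothesis \eqref{e0} on $x\mapsto k(x,\cdot)$ enters.
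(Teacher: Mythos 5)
Your argument is correct and follows essentially the same route as the paper's: pass to spherical coordinates, use the compactness of $\T^d$ together with \eqref{e0} to upgrade the pointwise Ces\`aro convergence in \eqref{e:2a} to convergence that is uniform in the base point, discretize the radial integral by a Riemann sum using \eqref{e:3.4b}, and finish by dominated convergence in $\theta$. One small point of difference that is actually a gain: the hypothesis as stated allows the $\varrho_0$-null exceptional set of directions to depend on $x$, and the paper simply introduces $\Lambda_0:=\{\theta:\eqref{e:2a}\text{ holds for every }x\}$ and tacitly treats it as co-null; your countable $\tfrac1\ell$-net construction (justified by total boundedness of $\{k(y,\cdot):y\}$ in sup-norm via \eqref{e0}) produces a single null set $N$ outside of which the Ces\`aro averages converge uniformly in $y$, which supplies the justification the paper leaves implicit.
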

\begin{proof}
(1)  Let $\Lambda_0$ denote the collection of all $\theta\in \bS^{d-1}$ such
that for any $x\in \R^d$ there is a constant  $\bar k(x, \theta)$ so that \eqref{e:2a} holds. Now we are going to show that
the function
 $x\mapsto \bar k(x, \theta)$ is equi-continuous in $x$ for all $\theta \in \Lambda_0$. Moreover,
 for every $\theta \in \Lambda_0$,
 the convergence in \eqref{e:2a} is  uniform   in  $x\in \R^d $.

  For any $\eps >0$,
  by \eqref{e0},      there is a constant $\delta_0>0$ so that $|k(x, z)-k(y, z)|<\eps$ for any $x, y,z\in\R^d$ with
 $|x-y|\leq \delta_0$.   Thus  for $\theta \in \Lambda_0$,
  $$
 |\bar k(x, \theta) -\bar k(y, \theta)|
 \leq \limsup_{T\to \infty} \frac1T \int_0^{T}  |k(x, (r, \theta)) - k(y, (r, \theta))| \,dr \leq \eps .
 $$
 as long as $|x-y|\leq \delta_0$.  In other words,  $x\mapsto \bar k(x, \theta)$ is equi-continuous in $x$ for all $\theta \in \Lambda_0$.

 Let $\{y_k: 1\leq k\leq N\}\subset \T^d$
 be an $\delta_0$-net in $\T^d$ meaning that for every $x\in \T^d$ there is some $1\le j\le N$ so that $|x-y_j|< \delta_0$.
 By \eqref{e:2a}, for each $\theta \in \Lambda_0$, there is some $T_0\geq 1$ so that for every $1\leq k\leq N$,
 $$
 \Big| \frac1T \int_0^{T}  k(y_k, (r, \theta)) \, dr -  \bar k(y_k, \theta)  \Big| <\eps
 \quad \hbox{for every } T\geq T_0 .
 $$
 For  every $x\in \T^d$,  there is some $y_k$ so that $|x-y_k| <\delta_0$. Hence for  any
 $T\geq T_0$,
 \begin{align*}
 &  \Big| \frac1T \int_0^{T}  k(x, (r, \theta)) \, dr -  \bar k(x, \theta)  \Big| \\
 &\leq    \frac1T \int_0^{T} | k(x, (r, \theta)) - k(y_k, (r, \theta))|  \,dr +  \Big| \frac1T \int_0^{T}  k(y_k, (r, \theta)) \, dr -  \bar k(y_k, \theta)  \Big| + | \bar k(y_k, \theta) -\bar k (x, \theta)|  \\
 &\leq   3\eps,
 \end{align*} where in the last inequality we used \eqref{e0}   again.
 This proves that for each $\theta \in \Lambda_0$,
 \begin{equation}\label{e7}
\lim_{T \to \infty} \sup_{x\in \R^d} \Big|\frac1T \int_0^{T}  k(x, (r, \theta))  \,dr-\bar k(x, \theta)\Big|=0.
 \end{equation}

(2)   Let $f$ be a bounded function such that \eqref{e:3.4b} is satisfied.
For each
$\eps_0 >0$,
   there is $\delta_1  \in (0, 1/4]$ so that  $ |f(x, z_1)-f(x, z_2)|  < \eps_0$ whenever $|z_1-z_2|<\delta_1$.
   For $0<r<R$, we divide $[r, R]$ into $N= 1+[(R-r)/ \delta_1]$ equal subintervals with partition points $\{t_0, t_1, \cdots, t_N\}$
   with $\Delta = t_k-t_{k-1} = (R-r)/N \in
   (0, \delta_1)$.
   By taking $\delta_1$ smaller if needed (which may depend on $r$, $R$ and $\|f\|_\infty$),
   we can and do assume
   that
\begin{equation}\label{e8}
\sup_{x\in \R^d}\Big|\frac{f(x,(s,\theta))}{s^{1+\alpha}} -\frac{f(x,(t_k,\theta))}{t_k^{1+\alpha}}\Big| \leq \eps_0   \quad \hbox{for } s\in [t_{k-1}, t_k],\ \theta\in \bS^{d-1}.
\end{equation}
  Using spherical coordinates and Fatou's lemma,
     we have for any
     $0<r\le R$,
  \begin{align*}
 &  \limsup_{\e \to 0} \sup_{x\in \R^d} \Big| \int_{\{r\le |z|\le R\}}f(x,z)
( k(x/\e  ,z/\e) - \bar k (x/\e  ,  z/|z|) ) \,\Pi_0 (dz) \Big|\\
&\leq  \limsup_{\e \to 0}     \sup_{x\in \R^d}\left|
\int_{\bS^{d-1}} \int_r^R   f(x, (s, \theta)) \left(
k(x/\e ,  (s/\e, \theta) ) -\bar k (x/\e , \theta ) \right) \frac{1}{s^{1+\alpha}} \,ds  \,   \varrho_0 (d\theta)  \right|     \\
&\leq \limsup_{\e \to 0}   \sup_{x\in \R^d}\int_{\bS^{d-1}} \left| \sum_{k=1}^N \int_{t_{k-1}}^{t_k}   f(x, (s, \theta)) \left(
k(x/\e  , (s/\e, \theta) ) -\bar k (x/ \e , \theta) \right) \frac{1}{s^{1+\alpha}} \,ds  \right|  \,  \varrho_0 (d\theta)   \\
&\leq \limsup_{\e \to 0}  \sup_{x\in \R^d} \left|
 \int_{\bS^{d-1}}  \sum_{k=1}^N
 \frac{ f(x, (t_k, \theta))}{t_k^{1+\alpha}}
 \int_{t_{k-1}}^{t_k}   \left(
k(x/\e  ,  (s/\e, \theta) ) -\bar k (x/ \e , \theta) \right)   \,ds  \,  \varrho_0 (d\theta) \right|    \\
&\quad  +   2\eps_0R\|k\|_\infty\varrho_0(\bS^{d-1})\\
&\leq \limsup_{\e \to 0}  \sum_{k=1}^N  \frac{ \| f\|_\infty }{t_k^{1+\alpha}}\Bigg(
 \int_{\bS^{d-1}}  t_{k-1}\sup_{x\in \R^d} \left|
 \frac{\eps}{t_{k-1}}\int_0^{t_{k-1}/\eps}
   k(x/\e  ,  (s, \theta) )\,ds -\bar k (x/ \e , \theta)\right| \,\varrho_0 (d\theta)   \\
  & \qquad\qquad\qquad\qquad \quad+\int_{\bS^{d-1}}  t_{k}\sup_{x\in \R^d} \left|
 \frac{\eps}{t_{k}}\int_0^{t_{k}/\eps}
   k(x/\e  ,  (s, \theta ))\,ds -\bar k (x/ \e , \theta)\right| \,\varrho_0 (d\theta)  \Bigg)\\
& \quad +   2\eps_0R\|k\|_\infty\varrho_0(\bS^{d-1}) \\
&\leq     \sum_{k=1}^N  \frac{ \|f\|_\infty }{t_k^{1+\alpha}} \Bigg(
\int_{\bS^{d-1}}t_{k-1}\limsup_{\e \to 0} \sup_{x\in \R^d} \left|
 \frac{\eps}{t_{k-1}}\int_0^{t_{k-1}/\eps}
   k(x  , ( s, \theta ))\,ds -\bar k (x , \theta)\right| \,\varrho_0 (d\theta)   \\
 &\qquad\qquad\qquad + \int_{\bS^{d-1}}t_{k}\limsup_{\e \to 0} \sup_{x\in \R^d} \left|
 \frac{\eps}{t_{k}}\int_0^{t_{k}/\eps}
   k(x  ,  (s, \theta ))\,ds -\bar k (x , \theta)\right| \,\varrho_0 (d\theta) \Bigg) \\
& \quad + 2\eps_0 R\|k\|_\infty\varrho_0(\bS^{d-1}) \\
&=  2\eps_0 R\|k\|_\infty\varrho_0(\bS^{d-1}).
\end{align*}
Here in the third inequality we have used \eqref{e8} and the fact $\|\bar k\|_\infty\le \|k\|_\infty$,
while the last inequality follows from the fact that \eqref{e7}
holds for $\varrho_0$-a.e.\ $\theta\in \bS^{d-1}$. Since $\eps_0>0$ is arbitrary, we get  \eqref{e:8} immediately.
\end{proof}

 As in the proof of Proposition \ref{l6-1},
 in the following   denote by $\Lambda_0$
the collection of all $\theta\in \bS^{d-1}$ so
that  for every $x\in \R^d$  there is a constant  $\bar k(x, \theta)$ such that \eqref{e:2a} holds.

\begin{remark}\label{R:6.8}
Here are two simple cases so that $\theta\in \Lambda_0$.
\begin{itemize}
 \item[(i)]   If $\theta \in \bS^{d-1}$ has the property that  $r\mapsto k(x, (r, \theta))$ is
 multivariate $T(x)$-periodic
 for each $x\in \R^d$
(it can have different period for different $x\in \R^d$), then clearly \eqref{e:2a} holds for any function $f(x, z)$ that satisfies \eqref{e:3.4b} with $ \bar k(x, \theta)=\frac{1}{T(x)}\int_0^{T(x)} k(x,(r,\theta))\,dr$, and so $\theta \in \Lambda_0$.

\item[(ii)]  A function $\varphi (r)$ on $[0, \infty)$ is said to be almost periodic if it is the uniform limit of some periodic functions
(cf.\ \cite[p. 81]{B}). It follows from   (i) above that
if $\theta \in \bS^{d-1}$ has the property that  $r\mapsto k(x, (r, \theta))$ is almost periodic for each $x\in \R^d$,
then \eqref{e:2a} holds for any function $f(x, z)$ that satisfies \eqref{e:3.4b}, and so $\theta \in \Lambda_0$.  See
 Lemma \ref{P:5}
and its proof below for more information.
\end{itemize}
\end{remark}

Next, we will
present some sufficient conditions for $\theta\in \Lambda_0$ under the periodicity of $z\mapsto k(x,z)$.

\begin{corollary}\label{P:4}
Suppose in addition that for each $x\in \R^d$, there is some $T:=T(x)>0$ so that
$z\mapsto k(x, z)$ is  multivariate $T$-periodic.
 \begin{itemize}
\item[\rm (i)]
$\theta=(\theta_1, \cdots, \theta_d) \in \bS^{d-1}$ is in $  \Lambda_0$,
if $\theta$ is pairwise rational in the sense that each $\theta_i/\theta_j$ is a rational number whenever $\theta_j\not= 0$;

\item[\rm (ii)] If $\varrho_0$ does not charge on the set of those $\theta \in \bS^{d-1}$ that are not pairwise rational,
then
\eqref{e:8} holds for any function $f(x, z)$ that satisfies \eqref{e:3.4b}.
\end{itemize}
In particular, suppose that $\varrho_0(d\theta)=\delta_{\theta_0}(d\theta)$ for some rational point
 $\theta_0=(m_1/n,\cdots,m_d/n)\in \bS^{d-1}$, where $n,m_1,\cdots,m_d\in \mathbb{Z}$
 and
 $\delta_{\theta_0}(d\theta)$ denotes the Dirac measure on $\bS^{d-1}$.
 Then   \eqref{e:8} holds for any function $f(x, z)$ satisfying \eqref{e:3.4b} with
 $$\bar k(x,\theta)=\frac{1}{n}\int_0^n k\left(x,(r,\theta_0)\right)\,dr
 \quad \hbox{for all } \theta \in \bS^{d-1}.
 $$
\end{corollary}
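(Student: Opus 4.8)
The plan is to deduce Corollary~\ref{P:4} from Proposition~\ref{l6-1} by showing that every pairwise rational $\theta\in\bS^{d-1}$ belongs to $\Lambda_0$; once that is established, part~(ii) follows immediately by applying \eqref{e:8} and the hypothesis $\varrho_0(\bS^{d-1}\setminus\{\text{pairwise rational }\theta\})=0$, and the final concrete statement is just the special case $\varrho_0=\delta_{\theta_0}$.

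First I would fix $x\in\R^d$ and a pairwise rational $\theta=(\theta_1,\dots,\theta_d)\in\bS^{d-1}$, and verify that $r\mapsto k(x,(r,\theta))$ is a periodic function of $r$. Since $z\mapsto k(x,z)$ is multivariate $T$-periodic with $T=T(x)$, it suffices to find $r_0>0$ with $r_0\theta\in T\Z^d$, for then $k(x,(r+r_0,\theta))=k(x,r\theta+r_0\theta)=k(x,r\theta)=k(x,(r,\theta))$ for all $r\ge0$. Pick an index $i$ with $\theta_i\neq0$; by pairwise rationality each $\theta_j/\theta_i=p_j/q_j\in\Q$ (with $\theta_j/\theta_i:=0$ if $\theta_j=0$). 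Let $m$ be a common multiple of the $q_j$ so that $m\theta_j/\theta_i\in\Z$ for all $j$. Then $r_0:=Tm/\theta_i>0$ gives $r_0\theta_j=Tm\theta_j/\theta_i\in T\Z$ for every $j$, i.e.\ $r_0\theta\in T\Z^d$. Hence $r\mapsto k(x,(r,\theta))$ is $r_0$-periodic, so by Remark~\ref{R:6.8}(i) the Ces\`aro limit \eqref{e:2a} exists with $\bar k(x,\theta)=\frac1{r_0}\int_0^{r_0}k(x,(r,\theta))\,dr$; since this holds for every $x$, $\theta\in\Lambda_0$. This proves~(i).

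For part~(ii): by~(i) and the assumption, \eqref{e:2a} holds for $\varrho_0$-a.e.\ $\theta\in\bS^{d-1}$, which is exactly the hypothesis of Proposition~\ref{l6-1}; applying that proposition yields \eqref{e:8} for any $f$ satisfying \eqref{e:3.4b}. Finally, when $\varrho_0=\delta_{\theta_0}$ with $\theta_0=(m_1/n,\dots,m_d/n)$, the point $\theta_0$ is pairwise rational, so by the computation above (with the natural choice of period) $r\mapsto k(x,(r,\theta_0))$ is $n$-periodic — indeed $n\theta_0=(m_1,\dots,m_d)\in\Z^d$, and since $z\mapsto k(x,z)$ is multivariate $1$-periodic under the standing hypothesis of this section, $k(x,(r+n,\theta_0))=k(x,r\theta_0+n\theta_0)=k(x,r\theta_0)$. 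Therefore $\bar k(x,\theta_0)=\frac1n\int_0^n k(x,(r,\theta_0))\,dr$, and setting $\bar k(x,\theta)$ to this same constant for all $\theta\in\bS^{d-1}$ (the values off the single atom $\theta_0$ being irrelevant to the integral against $\varrho_0=\delta_{\theta_0}$) makes \eqref{e:8} hold.

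The only genuinely nontrivial point is the lattice argument producing a common period $r_0$ with $r_0\theta\in T\Z^d$; everything else is bookkeeping and an application of results already proved. One should be slightly careful that the period $T=T(x)$ in the hypothesis of the corollary may depend on $x$, so the resulting period $r_0=r_0(x)$ and the limit $\bar k(x,\theta)$ depend on $x$ as well — but this is permitted, since Proposition~\ref{l6-1} only requires the pointwise-in-$x$ limit \eqref{e:2a}, and its proof already upgrades this to uniformity in $x$ using \eqref{e0}. Hence no extra uniformity hypothesis on $T(x)$ is needed.
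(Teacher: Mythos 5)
Your proof takes essentially the same route as the paper's: use pairwise rationality to produce $r_0>0$ with $r_0\theta\in T(x)\Z^d$, conclude that $r\mapsto k(x,(r,\theta))$ is $r_0$-periodic, invoke Remark~\ref{R:6.8}(i) to place $\theta$ in $\Lambda_0$, and then apply Proposition~\ref{l6-1} for part~(ii). The lattice construction you supply (pick $i$ with $\theta_i\neq 0$, clear denominators of $\theta_j/\theta_i$, set $r_0=Tm/|\theta_i|$) is exactly the detail the paper compresses into ``there is some $r_0>0$ so that $r_0\theta$ has integer coordinates,'' and your observation that $T$, $r_0$, and $\bar k(\cdot,\theta)$ may all depend on $x$ without harm is correct and is the content of Remark~\ref{R:6.8}(i). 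Two small remarks: you should write $r_0=Tm/|\theta_i|$ (or replace $r_0$ by $|r_0|$) to guarantee $r_0>0$ when $\theta_i<0$; and in the last paragraph you justify the explicit formula $\bar k(x,\theta)=\frac1n\int_0^n k(x,(r,\theta_0))\,dr$ by asserting that $z\mapsto k(x,z)$ is multivariate $1$-periodic ``under the standing hypothesis of this section'' --- but Section~\ref{S:6} only assumes $1$-periodicity in the $x$ variable, while the corollary's own hypothesis grants a period $T(x)$ in $z$. The stated $1/n$ normalization is literally correct only when $T(x)\equiv 1$ (which holds where the corollary is applied, Example~\ref{ex4-0}); in general the period is $nT(x)$ and one would write $\frac{1}{nT(x)}\int_0^{nT(x)}$. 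This imprecision is present in the paper's statement as well, so it is not a gap in your argument, but the attribution to the section's standing hypothesis is inaccurate.
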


\begin{proof} (i) If the measure $\theta \in \bS^{d-1}$ is pairwise rational,
then  there is some $r_0>0$ so that $r_0\theta$ has integer coordinates.
For each $x\in \R^d$,
$$
r\mapsto  k(x, (r, \theta)) = k(x, (r\theta_1, \cdots, r\theta_d))
$$
is a bounded $(r_0T)$-periodic function on $[0, \infty)$, and so \eqref{e:2a} holds with $ \bar k(x, \theta)=\frac{1}{r_0T}\int_0^{r_0T} k(x,(r,\theta))\,dr$ from Remark \ref{R:6.8}(i).

(ii) The assertion  follows immediately from (i) and  Proposition \ref{l6-1}.

Having (i) and (ii) at hand, we can easily see the validity of the last assertion.
\end{proof}

Recall that $\theta =(\theta_1, \cdots, \theta_d) \in \bS^{d-1}$ is said to be rationally dependent if there is some non-zero
$m=(m_1, \cdots, m_d) \in
\Z^d$ so that $\<m, \theta\>= \sum_{i=1}^d m_i\theta_i=0$.
Otherwise, we call $\theta$ rationally independent. When $d=1$, $\bS^{0}=\{1, -1\}$ so every its member is rationally independent.

\begin{lemma}\label{P:5}
Suppose that $f(x)=f(x_1, \cdots, x_d)$ is a continuous
multivariate $1$-periodic
function on $\R^d$.
Then for each $\theta \in \bS^{d-1}$, there is a constant $C(\theta)$ so that
 \begin{equation}\label{e:3}
 \lim_{T\to \infty} \frac1T \int_0^{T}  f(\theta t) \,dt = C(\theta).
  \end{equation}
Set
$$
\Gamma_f= \left\{ \theta \in \bS^{d-1}: C(\theta ) =\int_{\T^d} f(x) \,dx \right\}.
$$
Then every rationally independent $\theta \in \bS^{d-1}$ is in $\Gamma_f$. In particular,
 $\Gamma_f=\{1, -1\}$  if $d=1$, $\bS^1\setminus \Gamma_f$ is countable if  $d=2$,
and ${\rm dim}_H (\bS^{d-1}\setminus \Gamma_f) \leq d-2$ if $d\geq 3$.  Here  ${\rm dim}_H$ stands for the
Hausdorff dimension.
\end{lemma}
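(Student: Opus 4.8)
The plan is to recognize \eqref{e:3} as Weyl's ergodic averaging theorem for the linear flow $t\mapsto \theta t$ on the torus $\T^d$, and to prove it by Fourier analysis together with a uniform approximation. First I would check the assertion for characters: for $m\in\Z^d$ write $e_m(x):=e^{2\pi i\langle m,x\rangle}$. If $\langle m,\theta\rangle=0$, then $e_m(\theta t)\equiv 1$ so $\frac1T\int_0^T e_m(\theta t)\,dt=1$; if $\langle m,\theta\rangle\neq 0$, then
$$
\frac1T\int_0^T e_m(\theta t)\,dt=\frac{e^{2\pi i\langle m,\theta\rangle T}-1}{2\pi i\langle m,\theta\rangle T},
$$
which tends to $0$ as $T\to\infty$. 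Hence for any trigonometric polynomial $P=\sum_{|m|\le N}c_m e_m$ the average $\frac1T\int_0^T P(\theta t)\,dt$ converges to $\sum_{m:\langle m,\theta\rangle=0}c_m$, a sum that always contains the term $c_0=\int_{\T^d}P(x)\,dx$ and equals $c_0$ exactly when $m=0$ is the only integer vector orthogonal to $\theta$, i.e.\ when $\theta$ is rationally independent.

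Next I would pass to a general continuous multivariate $1$-periodic $f$ by uniform approximation by trigonometric polynomials (Fej\'er / Stone--Weierstrass on $\T^d$): given $\eps_0>0$ choose $P$ with $\|f-P\|_\infty<\eps_0$. Then $\big|\frac1T\int_0^T f(\theta t)\,dt-\frac1T\int_0^T P(\theta t)\,dt\big|\le\eps_0$ for all $T>0$, so $\limsup_{T\to\infty}\frac1T\int_0^T f(\theta t)\,dt$ and the corresponding $\liminf$ differ by at most $2\eps_0$; since $\eps_0>0$ is arbitrary, the limit exists, and we define $C(\theta)$ to be its value, which proves \eqref{e:3}. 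Moreover $|C(\theta)-\sum_{m:\langle m,\theta\rangle=0}c_m^P|\le\eps_0$ and $|\int_{\T^d}f-c_0^P|\le\eps_0$; if $\theta$ is rationally independent the first sum reduces to $c_0^P$, so $|C(\theta)-\int_{\T^d}f|\le 2\eps_0$ for every $\eps_0>0$, giving $C(\theta)=\int_{\T^d}f(x)\,dx$, i.e.\ $\theta\in\Gamma_f$. Equivalently, $\bS^{d-1}\setminus\Gamma_f\subseteq\bigcup_{0\neq m\in\Z^d}\{\theta\in\bS^{d-1}:\langle m,\theta\rangle=0\}$.

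Finally I would bound the size of this exceptional union. When $d=1$ there is no nonzero $m\in\Z$ with $m\theta=0$ for $\theta=\pm1$, so the union is empty and $\Gamma_f=\bS^0=\{1,-1\}$. When $d\ge 2$, each set $\{\theta\in\bS^{d-1}:\langle m,\theta\rangle=0\}$ with $m\neq 0$ is the intersection of $\bS^{d-1}$ with a hyperplane through the origin, hence a great subsphere isometric to $\bS^{d-2}$ (a two-point set when $d=2$), which has Hausdorff dimension $d-2$. Since $\Z^d\setminus\{0\}$ is countable and Hausdorff dimension is countably stable, $\bS^1\setminus\Gamma_f$ is countable when $d=2$ and ${\rm dim}_H(\bS^{d-1}\setminus\Gamma_f)\le d-2$ when $d\ge 3$. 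I do not anticipate a real obstacle in this argument; the only points needing a little care are the standard uniform approximation of a continuous periodic function by trigonometric polynomials and the combination of countable stability of Hausdorff dimension with the identification of $\bS^{d-1}$ intersected with a hyperplane through the origin as a $(d-2)$-dimensional sphere.
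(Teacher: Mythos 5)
Your proposal is correct and follows essentially the same route as the paper's proof: both verify the statement for trigonometric polynomials by direct computation on characters, identify $C(\theta)$ as $\sum_{m:\langle m,\theta\rangle=0}c_m$, pass to general continuous periodic $f$ via uniform approximation by trigonometric polynomials (Stone--Weierstrass/Fej\'er), and then observe that $\bS^{d-1}\setminus\Gamma_f$ is contained in the countable union of great subspheres $\{\theta:\langle m,\theta\rangle=0\}$, whose Hausdorff dimension is at most $d-2$. The only stylistic difference is that you make the $\limsup$--$\liminf$ squeeze explicit in passing to general $f$, which the paper leaves implicit.
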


\begin{proof}
The result is trivial when $d=1$.  So we assume $d\geq 2$ in the rest of the proof. Let $\langle\cdot,\cdot \rangle$ denote the inner product in $\R^d$.
 Define $f(x)=\sum_{k\in \Z^d: |k|\leq N}   c_k e^{i 2\pi  \langle k,  x\rangle}$ with
 $$
  c_k=  \int_{\T^d}  e^{-i 2\pi \langle k, x\rangle }  f(x) \,dx.
 $$ Then, for   $ \theta   \in \bS^{d-1}$,
$$
  f (\theta t ) = \sum_{k\in \Z^d: |k|\leq N}   c_k e^{i 2\pi \langle k, \theta\rangle t }.
$$
 Clearly
 \begin{equation}\label{e:1}
 C(\theta ):=\lim_{T\to \infty} \frac1T \int_0^{T} f (\theta t)\, dt =\sum_{k\in \Z^d: |k|\leq N, \langle k, \theta \rangle=0} c_k.
 \end{equation}

 Note that for each non-zero $k\in \Z^d$ and $d\geq 2$, the set $\{ \theta \in \bS^{d-1}: \langle k, \theta \rangle=0\}$ is a two-point set when $d=2$, and has Hausdorff dimension $d-2$ when $d\geq 3$.
 Noting also that
 \begin{equation}\label{e:7a}
 \bS^{d-1}\setminus \Gamma_f\subset  \{ \theta \in \bS^{d-1}: \langle k, \theta \rangle =0  \hbox{ for some non-zero } k\in \Z^d\},
 \end{equation}
 we have  $\bS^{d-1}\setminus \Gamma_f$
 is a countable set when $d=2$,
and ${\rm dim}_H (\bS^{d-1}\setminus \Gamma_f) \leq d-2$ for $d\geq 3$.

Now, suppose that $f(x_1, \cdots, x_d)$ is a continuous multivariate $1$-period function on $\R^d$.
It can then be viewed as a continuous function on $\T^d$. By the Stone-Weierstrass theorem,
it can be uniformly approximated by functions of the form $\sum_{k\in \Z^d: |k|\leq N}   c_k e^{i 2\pi  \langle k,  x\rangle }$
on $\R^d$,
see e.g.\ \cite[p.\ 26]{B}.
Thus for any $\theta=(\theta_1, \cdots, \theta_d ) \in \bS^{d-1}$,
$$
  f(\theta t) = f(\theta_1t, \cdots, \theta_d t)
$$
 can be approximated uniformly by the functions of the form
$\sum_{k\in \Z^d: |k|\leq N}  c_k e^{i 2\pi \langle k, \theta \rangle t}$.
    It follows from \eqref{e:1}   that for any   $\theta \in \bS^{d-1}$,
  there is a constant $C(\theta)$ so that
  \begin{equation}\label{e:3}
 \lim_{T\to \infty} \frac1T \int_0^{T}  f (\theta t) \,dt = C(\theta),
 \end{equation}
and \eqref{e:7a} continues to hold for this $f$.  In particular, $C(\theta) = \int_{ \T^d} f(x)\,dx$ if $\theta\in \bS^{d-1}$ is rationally independent.
 The  assertion of the proposition  now follows.
\end{proof}

   \begin{theorem}\label{T:6.5}
  Suppose that $k(x, z)$ is jointly continuous on $\R^d\times \R^d$ and
 $k(x, z)$ is multivariate $1$-periodic both in $x$ and in $z$. Then
 \begin{itemize}
 \item[\rm (i)]
 $\Lambda_0=\bS^{d-1}$; that is,
 \eqref{e:2a} holds for every $\theta \in \bS^{d-1}$ and $x\in \R^d$ with some $\bar k(x,\theta)$.

 \item[\rm (ii)] Let
 $$
 \bar k(x) = \int_{\T^d} k(x, z) \,dz,\quad x\in \R^d.
 $$
 Then for each $x\in \R^d$, $\bar k (x, \theta) = \bar k (x)$ for every rationally independent $\theta \in \bS^{d-1}$.
 In particular we have for every $x\in \R^d$,
 $\bar k(x, 1) =  \bar k(x,  -1)=\bar k(x)$ when $d=1$, $\{\theta \in \bS^1:   \bar k(x, \theta)\not= \bar k(x)\}$
 is countable when $d=2$, and the Hausdorff dimension of $\{\theta \in \bS^{d-1}:   \bar k(x, \theta)\not= \bar k(x)\}$
 is no larger than $d-2$.

 \item[\rm (iii)]
  Property  \eqref{e:8}
  holds for any function $f(x, z)$ that satisfies \eqref{e:3.4b}.
 \end{itemize}
 \end{theorem}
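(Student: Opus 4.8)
Here is my plan for proving Theorem \ref{T:6.5}.

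\textbf{Overall strategy.} The three assertions should be proved in order, since (i) feeds into (ii) and both feed into (iii). Part (iii) is then almost immediate from Proposition \ref{l6-1}. The real content is in parts (i) and (ii), which amount to a Weyl-type equidistribution / ergodic-averaging statement for the flow $t\mapsto \theta t$ on the torus $\T^d$. The key observation is that for a fixed $x\in\R^d$, the function $r\mapsto k(x,(r,\theta)) = k(x, r\theta_1,\dots,r\theta_d)$ is obtained by composing the continuous $1$-periodic function $z\mapsto k(x,z)$ on $\T^d$ with the linear flow $r\mapsto r\theta$ on $\R^d$. Thus Lemma \ref{P:5}, applied to the function $g(z):=k(x,z)$ (which for each fixed $x$ is continuous and multivariate $1$-periodic in $z$), gives exactly what we need.

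\textbf{Proof of (i) and (ii).} Fix $x\in\R^d$ and set $g_x(z):=k(x,z)$, which by hypothesis is continuous and multivariate $1$-periodic on $\R^d$. Apply Lemma \ref{P:5} to $g_x$: for each $\theta\in\bS^{d-1}$ there is a constant $C_x(\theta)$ with
$$
\lim_{T\to\infty}\frac1T\int_0^T g_x(\theta t)\,dt = C_x(\theta),
$$
and moreover $C_x(\theta)=\int_{\T^d} g_x(z)\,dz = \bar k(x)$ whenever $\theta$ is rationally independent. Since $g_x(\theta t)=k(x,(t,\theta))$, this is precisely \eqref{e:2a} with $\bar k(x,\theta):=C_x(\theta)$, proving $\Lambda_0=\bS^{d-1}$ (assertion (i)) and the identification $\bar k(x,\theta)=\bar k(x)$ for rationally independent $\theta$. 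The Hausdorff-dimension statements in (ii) follow verbatim from the corresponding statements in Lemma \ref{P:5} about $\bS^{d-1}\setminus\Gamma_{g_x}$, together with the fact that every rationally independent $\theta$ lies in $\Gamma_{g_x}$ while the exceptional set $\{\theta:\bar k(x,\theta)\neq\bar k(x)\}$ is contained in $\bS^{d-1}\setminus\Gamma_{g_x}\subset\{\theta\in\bS^{d-1}:\langle k,\theta\rangle=0\text{ for some nonzero }k\in\Z^d\}$; the latter is a two-point set for each fixed $k$ when $d=2$ (hence countable as $k$ ranges over $\Z^d$) and has Hausdorff dimension $d-2$ when $d\geq 3$.

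\textbf{Proof of (iii).} By part (i), the hypothesis \eqref{e:2a} of Proposition \ref{l6-1} is satisfied for every $x\in\R^d$ and every $\theta\in\bS^{d-1}$ (so a fortiori for $\varrho_0$-a.e.\ $\theta$), with the same $\bar k(x,\theta)$ identified above. The joint continuity of $k(x,z)$ on $\R^d\times\R^d$ together with periodicity implies in particular that \eqref{e0} holds (uniform continuity on the compact torus), so all standing hypotheses on $k$ in this section are in force. Proposition \ref{l6-1} then yields \eqref{e:8} for any bounded $f$ satisfying \eqref{e:3.4b}, which is exactly assertion (iii). This also justifies the claim made in Remark \ref{R:3.1}(4) that \eqref{e:3.6} holds automatically in this setting.

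\textbf{Main obstacle.} There is essentially no obstacle here: all the analytic heavy lifting — the Stone--Weierstrass approximation of $k(x,\cdot)$ by trigonometric polynomials and the resulting ergodic average along the line $r\theta$ — has already been carried out in Lemma \ref{P:5}, and the uniform-in-$x$ passage to the limit has been done in Proposition \ref{l6-1}. The only mild point requiring care is that Lemma \ref{P:5} is stated for a fixed periodic function of $z$ alone, so one must note that the constant $C_x(\theta)$ genuinely may depend on $x$, and that for the uniformity in $x$ needed in \eqref{e:8} one relies on the equicontinuity argument internal to the proof of Proposition \ref{l6-1} (which uses \eqref{e0}) rather than on Lemma \ref{P:5} directly. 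Beyond that, the proof is a direct assembly of the two preceding results.
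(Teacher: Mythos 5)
Your proof is correct and follows essentially the same route as the paper: for each fixed $x$ apply Lemma \ref{P:5} to the $1$-periodic function $z\mapsto k(x,z)$ to obtain (i) and (ii), then feed the resulting $\bar k(x,\theta)$ into Proposition \ref{l6-1} to obtain (iii). The paper's proof is a one-line reference to those two results; yours is a more explicit unpacking of the same argument.
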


 \begin{proof}
 This follows directly by applying Lemma \ref{P:5}  to function $z\mapsto k(x, z)$ and by Proposition \ref{l6-1}.
 \end{proof}

  \begin{remark}\label{R:7} We present two explicit cases that
   Theorem
  \ref{T:6.5} applies.

   \begin{itemize}
 \item[(i)]  Assume that $\Pi_0(dz)$ is absolutely continuous with respect to the Lebesgue measure on $\R^d$;
 or equivalently, $\varrho_0$ is absolutely continuous with respect to  the Lebesgue surface measure $\sigma$  on $\bS^{d-1}$. Then under the assumptions of
 Theorem
 \ref{T:6.5},
 \eqref{e:8} holds with $\bar k(x,\theta)=\bar k(x):= \int_{\T^d} k(x, z)\, dz$ for all $x\in \R^d$ and $\theta\in \bS^{d-1}$;
 that is,
 \begin{equation}\label{e:9a}
 \lim_{\e \to 0} \sup_{x\in \R^d} \Big| \int_{\{r\le |z|\le R\}}f(x,z)
 \left( k(x/\e  ,z/\e)  - \bar k  (x/\e ) \right)\,\Pi_0 (dz) \Big| =0.
\end{equation}
   We emphasize that for this result we do not assume the boundedness of the
 Radon-Nikodym derivative $\frac{ \varrho_0 (d\theta)}{\sigma (d\theta )}$.

 \item[(ii)]
In fact the conclusion \eqref{e:9a} holds for
 any  finite measure $\varrho_0$ on $\bS^{d-1}$ that does not charge on the set of rationally dependent $\theta \in \bS^{d-1}$.
 In particular,  if  $\varrho_0$ does not charge on singletons
 when $d=2$ and does not charge on
 subsets of $\bS^{d-1}$ that are of Hausdorff dimension $d-2$ when $d\geq 3$
 (for example, $\varrho_0$ is $\gamma$-dimensional Hausdorff measure with $\gamma   \in (d-2, d-1]$)
 then \eqref{e:9a} holds for any function $f(x, z)$ that satisfies \eqref{e:3.4b}.
\end{itemize}
 \end{remark}

 We can drop the continuous assumption on
 $z\mapsto k(x,z)$ in Theorem \ref{T:6.5}(iii)
 when
  the spherical measure $\varrho_0$ in $\Pi_0$ is absolutely continuous with respect to $\sigma$ with bounded  Radon-Nikodym derivative.

\begin{theorem} \label{T:6.7}
Suppose that   $\varrho_0$ is absolutely continuous with respect  to the Lebesgue surface measure $\sigma$  on $\bS^{d-1}$   with bounded
 Radon-Nikodym derivative.
 Let $k(x, z)$ be a bounded function on $\R^d\times \R^d$ such that $z\mapsto k(x,z)$ is $1$-periodic for each fixed $x\in \R^d$
and \eqref{e0} is true.
Then
 \eqref{e:8}  holds for any function $f(x, z)$ satisfying \eqref{e:3.4b} with $\bar k(x,\theta)=\bar k (x):= \int_{\T^d} k(x, z)\, dz$ for all $x\in \R^d$ and $\theta\in \bS^{d-1}$.
 \end{theorem}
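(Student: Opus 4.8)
The plan is to reduce the claim to one elementary fact — the average of a multivariate $1$-periodic function over a large cube differs from its mean by a boundary term of order $1/(\text{side length})$, \emph{uniformly} in the position of the cube and in any extra ``slow'' argument — and to handle $\Pi_0$ by trading it for the Lebesgue measure on the annulus. First I would remove $\Pi_0$: writing $g:=d\varrho_0/d\sigma$, which is bounded by hypothesis, in spherical coordinates $z=(s,\theta)$ one has $\Pi_0(dz)=s^{-(1+\alpha)}\,ds\,\varrho_0(d\theta)=g(z/|z|)\,|z|^{-(d+\alpha)}\,dz$, so on $\{r\le|z|\le R\}$ the measure $\Pi_0$ has a bounded Lebesgue density. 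Extending that density by $0$ off the annulus gives a measurable $h:\R^d\to[0,M]$, $M:=\|g\|_\infty r^{-(d+\alpha)}$, supported in $\{r\le|z|\le R\}$, and \eqref{e:8} becomes
\begin{equation}\label{pl-reduce}
\lim_{\e\to0}\sup_{x\in\R^d}\Big|\int_{\R^d}f(x,z)\,\big(k(x/\e,z/\e)-\bar k(x/\e)\big)\,h(z)\,dz\Big|=0,\qquad \bar k(x):=\int_{\T^d}k(x,z)\,dz,
\end{equation}
with $\|\bar k\|_\infty\le\|k\|_\infty$.

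The core lemma I would prove is: for any bounded $k(\cdot,\cdot)$ with $w\mapsto k(y,w)$ multivariate $1$-periodic, any cube $Q=z_0+[0,\ell]^d$, any $y\in\R^d$ and any $0<\e\le\ell$,
\begin{equation}\label{pl-cube}
\Big|\int_Q\big(k(y,z/\e)-\bar k(y)\big)\,dz\Big|\le C_d\,\|k\|_\infty\,\ell^{\,d-1}\,\e .
\end{equation}
This follows from the substitution $z=\e w$ (turning $Q$ into a cube of side $T:=\ell/\e$) together with the observation that the integral of a multivariate $1$-periodic function over any box with integer side lengths equals $(\text{volume})\times\bar k(y)$, by Fubini and one-dimensional periodicity; decomposing $\prod_i[a_i,a_i+T]$ into the box with integer sides $\lfloor T\rfloor$ plus the $2^d-1$ ``slabs'' of thickness $<1$ (total contribution $O(T^{d-1}\|k\|_\infty)$) and using $|T^d-\lfloor T\rfloor^d|\le dT^{d-1}$ yields \eqref{pl-cube}. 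The crucial point is that its right-hand side does not depend on $z_0$ or on $y$.

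I would then finish by a double approximation. Fix $\eta,\eta'>0$. By \eqref{e:3.4b} choose $\delta>0$ with $|f(x,z_1)-f(x,z_2)|<\eta$ whenever $|z_1-z_2|<\delta$, uniformly in $x$. Fix a cube $Q_0\supset\overline{B(0,R)}$ and split it into $N^d$ congruent subcubes $\{Q_j\}$, taking $N$ so large that $\mathrm{diam}(Q_j)<\delta$ and also $\|h-h_N\|_{L^1}<\eta'$, where $h_N:=\sum_j\big(|Q_j|^{-1}\!\int_{Q_j}h\big)\I_{Q_j}$ (possible since $h\in L^1$, by the Lebesgue differentiation theorem). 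Pick $z_j\in Q_j$. In \eqref{pl-reduce}, replacing $h$ by $h_N$ changes the integral by at most $2\|k\|_\infty\|f\|_\infty\eta'$, and then replacing $f(x,z)$ by $f(x,z_j)$ on each $Q_j$ changes it by at most $2\eta\|k\|_\infty M|Q_0|$; what remains is $\sum_j\big(|Q_j|^{-1}\!\int_{Q_j}h\big)f(x,z_j)\int_{Q_j}\big(k(x/\e,z/\e)-\bar k(x/\e)\big)\,dz$, which by \eqref{pl-cube} (with $y=x/\e$, $\ell=\mathrm{side}(Q_0)/N$) is bounded, for $\e$ small, by $N^d\cdot M\|f\|_\infty\cdot C_d\|k\|_\infty(\mathrm{side}(Q_0)/N)^{d-1}\e\to0$. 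Hence $\limsup_{\e\to0}\sup_x|\cdots|\le 2\|k\|_\infty\|f\|_\infty\eta'+2M|Q_0|\|k\|_\infty\eta$, and letting $\eta,\eta'\to0$ gives \eqref{pl-reduce}, hence the theorem.

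The only step that is not bookkeeping is that the density $h$ (equivalently $g=d\varrho_0/d\sigma$) is merely bounded, not continuous, so it cannot be frozen on cubes directly; the $L^1$-approximation $h\approx h_N$ circumvents this, and it is harmless because $k(x/\e,z/\e)-\bar k(x/\e)$ is uniformly bounded. I want to stress that the argument deliberately avoids any equidistribution-along-rays statement — which is false for a discontinuous periodic $k$ — and instead uses periodicity of $k$ in \emph{all} $d$ coordinate directions through \eqref{pl-cube}; this is exactly why the absolute continuity of $\varrho_0$, which makes $\Pi_0$ comparable to the Lebesgue measure on the annulus, is the natural hypothesis here.
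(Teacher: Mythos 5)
Your proof is correct, and it takes a genuinely different route from the paper's. The paper mollifies $k$ in the $z$-variable to obtain a jointly continuous $k_\delta$, applies Theorem~\ref{T:6.5} (whose proof goes through Lemma~\ref{P:5}, i.e.\ Stone--Weierstrass plus an equidistribution-of-rays argument) to $k_\delta$, and then removes the mollification via the $L^1$-continuity of translation, using that $\Pi_0$ restricted to the annulus has a bounded Lebesgue density. Your argument bypasses the mollification and any appeal to Theorem~\ref{T:6.5} or Lemma~\ref{P:5}: the self-contained cube estimate~\eqref{pl-cube} gives, for an $L^\infty$ function periodic in all $d$ coordinate directions, a quantitative $O(\e)$ error when averaging $k(y,\cdot/\e)$ over any fixed cube, uniformly in the cube's position and in the frozen variable $y$. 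Combining it with a Riemann-type discretization of both $f$ (possible by~\eqref{e:3.4b}) and the bounded density $h$ of $\Pi_0$ (via an $L^1$ step-function approximation) finishes the proof. Two observations are worth recording: (i) your argument is more elementary and entirely avoids equidistribution machinery, which is appropriate since, as you note, pointwise equidistribution along individual rays can fail for merely bounded periodic $k$; (ii) as a by-product your argument never invokes condition~\eqref{e0}, so under the absolute-continuity hypothesis on $\varrho_0$ Theorem~\ref{T:6.7} holds without any continuity in the $x$-variable, whereas the paper uses~\eqref{e0} both to make $k_\delta$ jointly continuous (required by Theorem~\ref{T:6.5}) and to make the $L^1$ mollification error uniform in $x$. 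The key structural point is the same in both proofs --- reduce to the Lebesgue measure on an annulus using the boundedness of $d\varrho_0/d\sigma$ --- but the averaging mechanism is different: cube-counting for you, density of trigonometric polynomials plus approximation by smooth kernels for the paper.
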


\begin{proof}
 Let $\varphi \geq 0$ is a smooth function with compact support in $\R^d$ having $\int_{\R^d} \varphi (y)\, dy =1$.
  For $\delta >0$, let  $\varphi_\delta (y) := \delta^{-d} \varphi (y/\delta)$.   Define
  $$
  k_\delta (x, z) = \int_{\R^d} k(x, z-y) \varphi_\delta (y)\, dy.
  $$
  Clearly $k_\delta (x, z)$ is a bounded, multivariate 1-periodic and continuous function on $\R^d\times \R^d$.
   Thus by Remark \ref{R:7},
   \begin{equation}\label{e:11}
 \lim_{\e \to 0} \sup_{x\in \R^d} \Big| \int_{\{r\le |z|\le R\}}f(x,z)
 \left( k_\delta (x/\e  ,z/\e)  - \bar k_\delta (x/\e  ) \right)\,\Pi_0 (dz) \Big| =0
\end{equation}
   for any function $f(x, z)$ that has property \eqref{e:3.4b}, where
   $\bar k_\delta (x ) := \int_{[0,1]^d} k_\delta (x, z) \,dz$.
  Clearly,
   $$
   \lim_{\delta \to 0} \int_{\T^d} | k(x, z) - k_\delta (x, z) |  \,dz =0,
   $$
   Condition \eqref{e0} implies that the above convergence is uniform in $x\in \R^d$.
   Furthermore,
      $ \bar k (x):= \int_{\T^d} k(x, z)\, dz $ is uniformly continuous in $x$,
          and $ \bar k_\delta (x) $ converges to $\bar k(x)$ uniformly as $\delta \to 0$.
    Observe that by
    the fact that $\varrho_0$ is absolutely continuous with respect to $\sigma$  on $\bS^{d-1}$ with bounded
 Radon-Nikodym derivative and the multivariate 1-periodicity of
 $(x,z)\mapsto k(x, z)$,
    it holds that for any $0<r<R$
    and $\eps \in (0, 1)$,
    \begin{align*}
    &    \int_{\{r\le |z|\le R\}} | k (x/\e  ,z/\e)  -k_\delta (x/\e  ,z/\e)  | \,\Pi_0 (dz)   \\
    &\leq   c_1 r^{-(d+\alpha)} \int_{\{r\le |z|\le R\}} | k (x/\e  ,z/\e)  -k_\delta (x/\e  ,z/\e)  | \,dz    \\
    &\leq  c_2  r^{-(d+\alpha)}  |B(0, R)\setminus B(0, r)| \int_{\T^d} | k(x, z) - k_\delta (x, z) |\,  dz,
    \end{align*}
 where $c_1$ and $c_2$ are two positive constants  that are independent of $\eps\in (0, 1)$,  $\delta\in (0, 1)$
 and $0<r<R$.
 Thus we have
    \begin{align*}
 &  \lim_{\e \to 0} \sup_{x\in \R^d}  \Big| \int_{\{r\le |z|\le R\}}f(x,z)
 \left( k  (x/\e  ,z/\e)  - \bar k (x/\e  ) \right)\,\Pi_0 (dz) \Big| \\
 &\leq  \lim_{\e \to 0} \sup_{x\in \R^d}   \| f\|_\infty   \int_{\{r\le |z|\le R\}}
 | k (x/\e  ,z/\e)  -k_\delta (x/\e  ,z/\e)  | \,\Pi_0 (dz)   \\
&\quad  +  \lim_{\e \to 0} \sup_{x\in \R^d}  \Big| \int_{\{r\le |z|\le R\}}f(x,z)
 \left( k_\delta (x/\e  ,z/\e)  - \bar k_\delta (x/\e  ) \right)\,\Pi_0 (dz) \Big| \\
 &\quad + \lim_{\e \to 0} \sup_{x\in \R^d}
  \| f\|_\infty | \bar k_\delta (x/\eps) -  \bar k (x/\eps)| \,
 \Pi_0 (r\leq |z| \leq R)     \\
  &\le c_2 \|f\|_\infty  r^{-(d+\alpha)}  |B(0, R)\setminus B(0, r)| \sup_{x\in \R^d}\int_{\T^d} | k(x, z) - k_\delta (x, z) |\,  dz\\
  &\quad+
 \| f\|_\infty  \Pi_0 (r\leq |z| \leq R)
   \sup_{x\in \R^d}   | \bar k_\delta (x ) -  \bar k (x )|.   \end{align*}
Letting $\delta \to 0$ in the right hand side of the inequality above proves the result.
\end{proof}

\section{Examples and comments}\label{S:7}

\subsection{Examples}\label{S:7.1}
In this subsection, we first give the proof
of the assertions in Example \ref{ex4-1}, and then present  two additional examples
to further illustrate the applications
of our main results.  Example \ref{ex4-1} together with two examples below show that the
periodic homogenization of jump processes is very different from that of
diffusion processes. In the homogenization of jump processes, large
jumps play a key role on the homogenized process.
The scale function $\varphi$ is determined by
the tail of the jumping kernel.

\begin{proof}[Proof of Example $\ref{ex4-1}$]
(i) Suppose that $\alpha\in (0,2)$ and that
$k(x, z)$ is a bounded continuous function on $\R^d\times \R^d$ so that
 $x\mapsto k(x,z)$ is
 multivariate $1$-periodic
 for each fixed $z\in \R^d$, $z\mapsto k(x,z)$ is
 multivariate $1$-periodic
 for each fixed $x\in \R^d$
 and \eqref{e0} is true.
Clearly $\varphi (r):= r^{\alpha}$
satisfies \eqref{e:3.1}.
Then it is easy to see that $\Pi (dz) $ defined by \eqref{ex4-1-1} has the expression \eqref{e:3.3a} with
$\varrho_0(d\theta)$, $\varphi(r)$  given above and $\kappa(r,d\theta) \equiv 0 $.
Furthermore, we know by Theorem  \ref{T:6.5} that
 \eqref{e:3.6} holds with $\bar k(x,z)=\bar k(x,z/|z|)$ given by \eqref{e:notenote}.
Therefore, the claimed assertions in this example follow readily from Theorem \ref{t3-1},
Theorem \ref{t3-2}, Remark \ref{R:7}(ii) and Theorem
\ref{T:6.7}.

(ii) Suppose that $\alpha=2$ and
$\lim_{|z|\rightarrow \infty}\bar k(z)=k_0$,
where $ \bar k(z)=\int_{\T^d}k(x,z)\,\mu(dx)$. Then
$$
\lim_{\e \to 0} \frac{1}{|\log {\e}|}\left|\int_{\{|z|\le
 1/{\e} \}}(z\otimes z)\big(\bar
k(z)-k_0\big)\, \Pi (dz)\right|=0
$$
and
\begin{align*}
\lim_{\e \to 0}\frac{1}{|\log {\e}|}\int_{\{1\le |z|\le
1/{\e} \}}z_i z_j\,
\Pi (dz) =
\int_{\bS^{d-1}}\theta_i\theta_j\,\varrho_0(d\theta).
\end{align*}
Consequently,
$$
\lim_{\e \to 0}\frac{1}{|\log {\e}|}\int_{\{|z|\le
1/{\e} \}}(z\otimes z)\bar k(z)\,\Pi (dz) = A,
$$
 where $A=\{a_{ij}\}_{1\le i,j\le d}$ with $$a_{ij}:=k_0\int_{\bS^{d-1}}\theta_i\theta_j\,\varrho_0(d\theta).$$
 Thus, \eqref{a2-3-1-00} and \eqref{a2-3-1}  hold with $\phi(\e)=|\log {\e}|$.
Furthermore, it is easy to see that \eqref{a2-3-2} holds for $\phi(\e)$. Then, the
assertion follows from  Theorem \ref{t3-4}.

(iii) If $\alpha>2$, then $\int_{\R^d}|z|^2 \,\Pi (dz) <\infty$, so the
desired assertion immediately follows from Theorem \ref{t3-3}.
\end{proof}

\begin{remark}  We call a subset $\Gamma \subset \R^d$  an unbounded  generalized  cone, if $\lambda \Gamma \subset \Gamma$ for every  $\lambda \geq 0$.  Note that $\Gamma$ can have several branches starting from the origin,  and it can be non-symmetric.
Let $\sigma(d\theta)$ denote the Lebesgue surface measure on
$\bS^{d-1}$.
  If  $$\I_{\{|z|>1\}}\Pi(dz)=\frac{1}{|z|^{d+\alpha}}\I_{\{|z|>1: z\in \Gamma\}}\,dz=
\frac{1}{r^{1+\alpha}}\I_{\{r>1,\theta\in \Gamma\}}\,dr\,\sigma(d\theta)$$ for some
generalized cone $\Gamma$ with $\sigma(\Gamma\cap \bS^{d-1})>0$ in Example \ref{ex4-1}(i),
then the generator $\sL_0$ of the limit process
$(\bar X_t)_{t\ge 0}$ is given by
 \begin{align*}
\bar\sL_0 f(x)=
\begin{cases}
 \displaystyle\int_{\R^d}\left(f(x+z)-f(x)\right)\frac{1}{|z|^{d+\alpha}}\I_{\Gamma } (z)\,dz,
 &\quad  \alpha\in (0,1),\\
  \displaystyle\int_{\R^d}\left(f(x+z)-f(x)-\langle \nabla f(x),z\rangle\I_{\{|z|\le 1\}} \right)\frac{1}{|z|^{d+1}}
  \I_{\Gamma } (z)\,dz,
  &\quad\ \alpha=1,\\
  \displaystyle\int_{\R^d}\left(f(x+z)-f(x)-\langle \nabla f(x),z\rangle \right)\frac{1}{|z|^{d+\alpha}}
 \I_{\Gamma } (z)\,dz,
  &\quad \alpha\in (1,2).
\end{cases}
\end{align*}
This gives us
another concrete
example that the jumping kernel limiting process $(\bar X_t)_{t\ge 0}$ can be degenerate.
\end{remark}

In the following, we
always  suppose that Assumptions {\bf(A1)}, {\bf(A2)} and {\bf(A3)}  hold, and that
$k(x, z)$ is a  non-negative  bounded function on $\R^d\times \R^d$ such that $x\mapsto k(x,z)$ is  multivariate $1$-periodic
for each fixed $z\in \R^d$
and \eqref{e0} is true.
We refer the reader
 to Subsection \ref{S:7.2} for conditions on small jumps of the jumping kernel such that
all {\bf(A1)}, {\bf(A2)} and {\bf(A3)} are satisfied.  Let
 $(X_t)_{t\ge 0}$ be the strong Markov process corresponding to the operator $\sL$ given by
\eqref{e4-1}. Let $\mu(dx)$ be the stationary probability measure for the quotient process of
$X$ on $\T^d$. Let
$b_R(x)$, $b_\infty(x)$, $\bar b_{R}$
(with $R>1$) and $\bar b_\infty$ be defined by \eqref{e:1.2}, \eqref{e:1.3} and \eqref{e0a}, respectively. Let $\sigma(d\theta)$ denote the Lebesgue surface measure on
$\bS^{d-1}$.

\begin{example}\label{ex4-3}
Let $a_0(\theta)$ be a
non-negative bounded function defined on the unit sphere $\bS^{d-1}$.
Suppose that
$$
\I_{\{|z|>1\}}\,\Pi (dz) =
\frac{a_0 (z/|z|)}
{|z|^d \Phi(|z|)}\I_{\{|z|>1\}}\,dz,$$ where
\begin{equation}\label{e:6.1}
\Phi(r):=\int_{\alpha_1}^{\alpha_2} r^\alpha \,\nu(d\alpha)
\end{equation}
for constants $0<\alpha_1<\alpha_2<2$ and a  non-negative finite measure $\nu$ on
$[\alpha_1,\alpha_2]$
such that $\alpha_2 \in {\rm supp}[\nu]$ (that is, $\nu ((\alpha_2-\e, \alpha_2])>0$
for any $\e >0$).
Suppose also that for every fixed $x\in \R^d$, $k(x,\cdot):\R^d \rightarrow \R_+$ is multivariate 1-periodic and satisfies \eqref{e0}.

For any $\e\in (0,1]$, define $(X_t^\e)_{t\ge 0}=
(\e X_{\Phi (1/\e)t})_{t\ge 0}$,
and
$(Y_t^\e)_{t\ge 0}$ by
\begin{align*}Y_t^\e=\begin{cases}
X_t^\e ,& 0<\alpha_2<1,\\
X_t^\e-\e\Phi (1/\e)(\bar b_{1/\e}+\bar b) t,&\alpha_2=1,\\
X_t^\e-\e\Phi (1/\e)(\bar b_\infty +\bar
b)t,&1<\alpha_2<2.\end{cases}
\end{align*}
Then, the
process $(Y_t^\e)_{t\ge0}$ converges weakly in $\D ([0, \infty); \R^d)$,
as $\e \to 0$, to a (possibly non-symmetric) $\alpha_2$-stable
process with jumping measure $\bar k_0a_0(z/|z|)|z|^{-d-\alpha_2}\,dz,$ where $\bar k_0:=\int_{\T^d}\int_{\T^d} k(x,z)\,dz\,\mu(dx)$.
\end{example}

\begin{proof}
Let $\varphi (r)=\Phi (r)$. Clearly, $\varphi (r)$ is a strictly increasing function on $(1, \infty)$. We claim that it satisfies condition \eqref{e:3.1} with $\alpha=\alpha_2$. For any $\eta \in (0, \alpha_2)$,
since $\nu ((\alpha_2-\eta, \alpha_2])>0$,
$$
 \varphi (r) \asymp \int_{\alpha_2-\eta}^{\alpha_2} r^\alpha \,\nu (d\alpha)
\quad \hbox{for } r> 1.
$$
Thus for $r>1$ and $\lambda\ge1$,
$$
\lambda^{\alpha_2-\eta} \varphi (r) \preceq \lambda^{\alpha_2-\eta} \int_{\alpha_2 -\eta}^{\alpha_2}
r^\alpha \,\nu (d\alpha)
\preceq   \varphi (\lambda r) \preceq \lambda^{\alpha_2}
 \int_{\alpha_2 -\eta}^{\alpha_2} r^\alpha\, \nu (d\alpha) \leq
 \lambda^{\alpha_2} \varphi (r).
$$
Hence we have shown that for any $\eta \in (0, \alpha_2)$, there is a positive constant $c_0=c_0(\eta)\le1$
so that
\begin{equation}\label{e:6.2}
c_0 (R/r)^{\alpha_2 -\eta} \leq \frac{\varphi (R)}{\varphi (r)} \leq c_0^{-1} (R/r)^{\alpha_2}
\quad \hbox{for any } R\geq r>1.
\end{equation}
Furthermore, for any $\eta >0$ sufficiently small,
clearly we have for every $r> 1$,
$$
 \limsup_{\lambda \to \infty} \frac{\varphi (\lambda r)}{\varphi (\lambda)}=
 \limsup_{\lambda \to \infty}  \frac{\int_{\alpha_1}^{\alpha_2} \lambda^\alpha r^\alpha \,\nu (d\alpha)}
{\int_{\alpha_1}^{\alpha_2} \lambda^\alpha  \,\nu (d\alpha)} \leq r^{\alpha_2}.
$$
On the other hand, since $\nu ((\alpha_2-\eta, \alpha_2])>0$,
$$
\liminf_{\lambda \to \infty} \frac{\varphi (\lambda r)}{\varphi (\lambda)}
\succeq\liminf_{\lambda \to \infty}  \frac{\int_{\alpha_2-\eta}^{\alpha_2} \lambda^\alpha r^\alpha \,\nu (d\alpha)}
{\int_{\alpha_2-\eta}^{\alpha_2} \lambda^\alpha \, \nu (d\alpha)}
\geq r^{\alpha_2-\eta} \quad \hbox{for } r> 1.
$$
Since the above holds for every sufficiently small $\eta>0$, passing $\eta \to 0$ yields that
$\displaystyle
\liminf_{\lambda \to \infty} {\varphi (\lambda r)}/{\varphi (\lambda)}=r^{\alpha_2}$
for  $r>1$.
Hence we get
$$
\lim_{\lambda \to \infty} \frac{\varphi (\lambda r)}{\varphi (\lambda)}=r^{\alpha_2}
 \quad \hbox{for } r\geq 1.
$$
 This together with \eqref{e:6.2} proves
 the claim that \eqref{e:3.1} holds with $\alpha_2$
in place of $\alpha$ there.
On the other hand, it follows from Theorem \ref{T:6.7}
that \eqref{e:3.6} holds  with $\bar k(x,z)=\bar k(x):=\int_{\T^d}k(x,u)\,du$ for all $x,z\in \R^d$.
 The desired assertions now follows from Theorems
\ref{t3-1} and \ref{t3-2}, after noticing that $\Pi (dz) $ has the representation
\eqref{e:3.3a} with $\varrho_0(d\theta)=a_0(\theta)\,\sigma(d\theta)$ and $\kappa (r,d\theta)\equiv0$, where $\sigma(d\theta)$ denotes the Lebesgue surface measure on
$\bS^{d-1}$.
\end{proof}

\begin{remark}\label{r6-6} (1)
If $\nu(d\eta )=\delta_{\{\alpha\}} (d\eta) + \delta_{\{\beta\}} (d\eta) $ with
$0<\beta<\alpha<2$ in \eqref{e:6.1}, then
$\I_{\{|z|>1\}}\,\Pi (dz)$  in Example \ref{ex4-3}  is reduced to
$$
 \frac{a_0(z/|z|) }{|z|^{d+\alpha}+|z|^{d+\beta}} \,  \I_{\{|z|>1\}}\,dz.
$$
  In this case, $\I_{\{|z|>1\}} \,\Pi (dz)$ admits the expression
 \eqref{e:3.3a} with
 $\varphi (r)=r^\alpha + r^\beta$, $\varrho_0(d\theta)=a_0(\theta)\sigma (d\theta) $ and
 $\kappa(r, d\theta) \equiv 0$.
  If we take
 $\varphi_1 (r)=r^\alpha$, $\varrho_0(d\theta)=a_0(\theta)\sigma (d\theta) $ and
 $\kappa_1 (r, d\theta) := \frac{r^\beta }{ r^\alpha + r^\beta } \, a_0(\theta) \sigma (d\theta)$,
 then $\I_{\{|z|>1\}} \Pi (dz) $ can be also
 represented by \eqref{e:3.3a}
with $\varphi_1$  and $\kappa_1(r, d \theta)$ in place of $\varphi$ and $\kappa(r, d \theta)$.
Thus the homogenization result for $X$ holds with both $\varphi$ and $\varphi_1$ as its time scaling function.

\medskip

(2) If $\nu (d\eta)$ is the Lebesgue measure on  $[\alpha/2, \alpha]$
for some $\alpha\in(0,2)$,
then
$$
\I_{\{|z|>1\}}\Pi (dz) =
\frac{a_0(z/|z|)}{|z|^d (|z|^{\alpha} -|z|^{\alpha/2})\log |z| } \I_{\{|z|>1\}}\,dz.
$$
In this case,  $\I_{\{|z|>1\}} \Pi (dz) $ admits the expression \eqref{e:3.3a}
with $\varphi(r)=\Phi(r)=(r^\alpha -r^{\alpha/2}) \log r $,
$\varrho_0(d\theta)=a_0(\theta)\,\sigma(d\theta)$ on $\bS^{d-1}$ and
$\kappa (r,d\theta)\equiv 0$.
If we take
$$
\varphi_1 (r)=r^\alpha  \log r \I_{\{r>1\}},
$$
then clearly $\lim_{r\to \infty} \varphi_1 (r)/\varphi (r)=1$.
Thus by Remark \ref{R:3.1}, $\I_{\{|z|>1\}} \Pi (dz) $ can also be represented by \eqref{e:3.3a}
with $\varphi_1$ in place of $\varphi$ (but with different $\kappa (r,d\theta)$); that is, we can write
$$
\I_{\{|z|>1\}}\Pi (dz) =
\I_{\{|z|>1\}}\frac{a_0(\theta)+\kappa_1(r,d\theta)}{r^{d+\alpha}\log r}\,\sigma(d\theta)\,dr,
$$
where $\kappa_1(r,\theta)$ satisfies \eqref{e:3.4}.  In particular,
the homogenization result for $X$ holds with both $\varphi$ and $\varphi_1$ as its time scaling function.

(3) Similar to these of Example
\ref{ex4-1}, we can get the assertion when the jumping measure $\Pi (dz) $ enjoys the form
$$\I_{\{|z|>1\}}\Pi (dz) =
\I_{\{|z|>1\}}\frac{a_0(z/|z|)}{|z|^{d+\alpha}\log |z|}\,dz$$
with $\alpha\ge2$.  In details, when $\alpha=2$,  define
$$
Y_t^\e:=\e X_{\e^{-2}|\log \log {\e}|t} - \e^{-1}|\log \log {\e}|(\bar b_\infty+\bar b) t,\quad t\ge0.
$$
Suppose that \eqref{ex4-1-3} holds for some $k_0>0$.
Then, as $\e \to 0$, $(Y_t^\e)_{t\ge 0}$ converges weakly in $\D ([0, \infty); \R^d)$ to
Brownian motion $(B_t)_{t\ge0}$ with the covariance matrix
$A=\{a_{ij}\}_{1\le i,j\le d}$, where $$a_{ij}= k_0 \int_{\bS^{d-1}}\theta_i\theta_ja_0(\theta) \,\sigma (d\theta).$$
When $\alpha>2$, we define
$$
Y_t^\e:=\e X_{t/\e^{2}} - \e^{-1}(\bar b_\infty+\bar b) t,\quad t\ge0.
$$
Then, as $\e \to 0$, $(Y_t^\e)_{t\ge 0}$ converges weakly in $\D ([0, \infty); \R^d)$ to
Brownian motion $(B_t)_{t\ge0}$ with the covariance matrix $A$ defined
by \eqref{ex4-1-2}.
\end{remark}

The following example is concerned
with  the homogenization for jump process with a singular jumping kernel.

\begin{example}\label{ex4-0}
 Suppose that
\begin{equation}\label{ex4-0-0}
\I_{\{|z|>1\}}\Pi (dz) =\sum_{i=1}^d\frac{1}{r^{1+\alpha}}\delta_{e_i}(d\theta)
\I_{\{|r|>1\}}\,dr,
\end{equation}
where $\{e_i\}_{i=1}^d$ is the standard orthonormal basis of $\R^d$
and $\delta_{\theta_0}(d\theta)$ denotes the Dirac measure on $\bS^{d-1}$ concentrated at $\theta_0\in \bS^{d-1}$.
\begin{itemize}
\item [(i)] Suppose that $z\mapsto k(x,z)$ is
multivariate 1-periodic
for each fixed $x\in \R^d$.
For any $\e\in (0,1)$, define
 $(Y_t^\e)_{t\ge 0}$ by
\begin{align*}Y_t^\e=\begin{cases}
\e X_{t/\e^{\alpha}} ,& 0<\alpha<1,\\
\e X_{t/\e^{\alpha}}-(\bar b_{1/\e}+\bar b) t,&\alpha=1,\\
\e X_{t/\e^{\alpha}}-\e^{1-\alpha}(\bar b_\infty +\bar
b)t,&1<\alpha<2.\end{cases}
\end{align*} Then the
process $(Y_t^\e)_{t\ge0}$ converges weakly in $\D ([0, \infty); \R^d)$, as $\e \to 0$, to a non-symmetric $\alpha$-stable
process $(\bar X_t)_{t\ge0}$  with infinitesimal generator $\sL_0$ as follows
\begin{align*}
\sL_0 f(x)=
\begin{cases}
 \displaystyle\sum_{i=1}^d\int_{0}^\infty\left(f(x+z_ie_i)-f(x)\right)
 \frac{\bar k_i}{z_i^{1+\alpha}}\,dz_i,
 &\quad  \alpha\in (0,1),\\
  \displaystyle\sum_{i=1}^d\int_{0}^\infty\left(f(x+z_ie_i)-f(x)-\frac{\partial f(x)}{\partial x_i}\cdot z_i
  \I_{\{0<z_i\le 1\}}\right)
  \frac{\bar k_i}{z_i^{1+\alpha}}\,dz_i
  &\quad\ \alpha=1,\\
  \displaystyle\sum_{i=1}^d\int_{0}^\infty\left(f(x+z_ie_i)-f(x)- \frac{\partial f(x)}{\partial x_i}\cdot z_i\right)
     \frac{\bar k_i}{z_i^{1+\alpha}}\,dz_i
  &\quad \alpha\in (1,2),
  \end{cases}
  \end{align*}
 where $$\bar k_i:=\int_{\T^d}\int_0^1 k\left(x,(0,\cdots,z_i,\cdots,0)\right)\,dz_i\,\mu(dx).$$

\item [(ii)] When $\alpha=2$, we define
$$
 Y_t^\e:=\e X_{\e^{-2}| \log {\e}|t} - \e^{-1}| \log {\e}|(\bar b_\infty+\bar b) t,\quad t\ge0.
$$
Suppose that \eqref{ex4-1-3} holds for some $k_0>0$.
Then, as $\e \to 0$, $(Y_t^\e)_{t\ge 0}$ converges weakly in $\D ([0, \infty); \R^d)$ to
Brownian motion
$(B_t)_{t\ge0}$ with the covariance matrix
$A:=k_0{\rm I}_{d\times d}$,
where ${\rm I}_{d\times d}$ denotes the $d\times d$ identity matrix.

\item [(iii)] When $\alpha>2$, we define
$$
Y_t^\e:=\e X_{t/\e^2} - \e^{-1}(\bar b_\infty+\bar b) t,\quad t\ge0.
$$
Then, as $\e \to 0$, $(Y_t^\e)_{t\ge 0}$ converges weakly in $\D ([0, \infty); \R^d)$ to
Brownian motion $(B_t)_{t\ge0}$ with the covariance matrix $A$ defined
by \eqref{ex4-1-2}.
\end{itemize}
\end{example}
\begin{proof}
By \eqref{ex4-0-0} we know that \eqref{e:3.3a} holds with $\varphi(r)=r^\alpha$,
$\varrho_0(d\theta)=\sigma_0 (d\theta)$
and $\kappa(r,d\theta)\equiv0$.
According to Corollary \ref{P:4} below we know that \eqref{e:3.6} holds with $\bar k(x,e_i)=\int_0^1 k\left(x,(0,\cdots,z_i,\cdots,0)\right)\,dz_i$
for $1\le i \le d$, and $\bar k(x,\theta)=0$ for any $\theta\in \bS^{d-1}\backslash\{e_i\}_{1\le i\le d}$.
Hence the desired assertion in (i) follows from
Theorems \ref{t3-1} and \ref{t3-2}.
The proofs of (ii) and (iii) are similar to these of Example
\ref{ex4-1}.
\end{proof}

\subsection{Comments on assumptions {\bf (A1)}, {\bf (A2)} and {\bf (A3)}}\label{S:7.2}
Assumptions {\bf (A1)}, {\bf (A2)} and {\bf (A3)} are closely related with
recent developments on
the fundamental solution
of the  L\'evy type operators. For example, in \cite{CZ} the
authors considered the following L\'evy-type operator on $\R^d$:
$$\sL f(x)=\lim_{\delta \to0}\int_{\{|z|> \delta\}} (f(x+z)-f(x))
\frac{k(x,z)}{|z|^{d+\alpha}}\,dz,$$ where $0<k_1\le k(x,z)\le k_2$,
$k(x,z)=k(x,-z)$ and $|k(x,z)-k(y,z)|\le k_3|x-y|^\beta$ for some
constants $k_i>0$ $(i=1,2,3)$ and $\beta\in (0,1)$. Later the results of \cite{CZ} are
extended to time-dependent cases in \cite{CZ1} such that the symmetric assumption
in $z$ for the function $k(x,z)$ are not required; moreover, the corresponding results for the
perturbation by a drift term $b(x)$ belonging to some Kato's class when
$\alpha\in (1,2)$ are also considered
there,
see \cite[Theorem 1.5]{CZ1}.
For the critical case (i.e., $\alpha=1$), one can refer to \cite{XZ}. See \cite{Peng, KSV, CCW} and the
references therein for more recent progress on this topic, including the case that
a large class of symmetric L\'evy processes are considered instead of rotationally symmetric $\alpha$-stable processes,
and the case that the index function $\alpha(x)$ depends on $x.$

\begin{proposition}\label{l4-1}
Let $\sL$ be the operator given by \eqref{e4-1} such that the coefficients satisfy all the assumptions below \eqref{e4-1},
$k(x,z)$ is bounded between two positive constants,
 and that there
is a constant $\beta\in(0,1)$ so that
$b(x) \in C_b^\beta(\R^d)$ and
$$
\sup_{z\in \R^d}|k(x,z)-k(y,z)|\le c_0|x-y|^{\beta},\quad x,y\in \R^d
$$ for some $c_0>0$.
Assume that
\begin{equation}\label{l4-1-1}
\I_{\{|z|\le 1\}}\,\Pi (dz) =\I_{\{|z|\le 1\}}\frac{1}{|z|^{d+\alpha_0}}\,dz
\end{equation}
for some $\alpha_0\in (0,2)$.
For $\alpha_0\in (0,1)$, we
assume in addition
that $b(x)=\int_{\{|z|\le 1\}}z\frac{k(x,z)}{|z|^{d+\alpha}}\,dz$; for $\alpha_0=1$, we
assume in addition
that $k(x,z)=k(x,-z)$ for all $x,z\in \R^d$. Then
assumptions ${\bf (A1)}$ and ${\bf (A2)}$ are
satisfied.
\end{proposition}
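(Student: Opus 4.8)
The plan is to reduce {\bf (A1)} and {\bf (A2)} to known well-posedness and sharp heat-kernel results for $\alpha_0$-stable-like operators, by peeling off the (bounded) large-jump part of $\sL$. Write $\sL=\sL^{\sharp}+\sL^{\flat}$, where
\[
\sL^{\flat}f(x):=\int_{\{|z|>1\}}\big(f(x+z)-f(x)\big)k(x,z)\,\Pi(dz)
\]
is a bounded operator on $C_b(\R^d)$ --- here we use that $\Pi(\{|z|>1\})<\infty$, which is immediate from $\int_{\R^d}(1\wedge|z|^2)\,\Pi(dz)<\infty$, together with $\|k\|_\infty<\infty$ --- and $\sL^{\sharp}$ collects the small jumps and the drift,
\[
\sL^{\sharp}f(x)=\int_{\{|z|\le1\}}\Big(f(x+z)-f(x)-\langle\nabla f(x),z\rangle\Big)\frac{k(x,z)}{|z|^{d+\alpha_0}}\,dz+\langle b(x),\nabla f(x)\rangle,
\]
using \eqref{l4-1-1}. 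When $\alpha_0\in(0,1)$ the extra hypothesis $b(x)=\int_{\{|z|\le1\}}z\,k(x,z)|z|^{-d-\alpha_0}\,dz$ cancels the compensator, so $\sL^{\sharp}$ becomes the pure-jump operator $\int_{\{|z|\le1\}}(f(x+z)-f(x))k(x,z)|z|^{-d-\alpha_0}\,dz$.

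Next I would quote, case by case, that the martingale problem for $(\sL^{\sharp},C_b^2(\R^d))$ is well posed and its solution is a conservative Feller process with a jointly continuous transition density $p^{\sharp}(t,x,y)$ enjoying two-sided stable-type bounds, so in particular $p^{\sharp}(t,x,y)>0$ for all $t>0$ and $x,y\in\R^d$: for $\alpha_0\in(1,2)$ the drift $b\in C_b^{\beta}(\R^d)$ lies in the relevant Kato class and this is \cite[Theorem 1.5]{CZ1} (see also \cite{CZ}); for $\alpha_0=1$ the symmetry $k(x,z)=k(x,-z)$ makes $\sL^{\sharp}$ the critical symmetric stable-like operator of \cite{XZ}; for $\alpha_0\in(0,1)$ one uses \cite{CZ,CZ2,Peng,Ku}. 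Adding the bounded operator $\sL^{\flat}$ back preserves well-posedness of the martingale problem and the strong Markov property of the canonical solution (a standard bounded-perturbation argument, e.g.\ \cite[Theorem 3.1]{Ku}); this gives {\bf (A1)}. Moreover, by the interlacing construction --- the process $X$ follows $X^{\sharp}$ until the first ``large'' jump, whose clock rings at rate at most $\Lambda:=\|k\|_\infty\Pi(\{|z|>1\})<\infty$ --- one obtains $p(t,x,y)\ge e^{-\Lambda t}p^{\sharp}(t,x,y)>0$, while writing $p=\sum_{n\ge0}p_n$ according to the number of large jumps (a uniformly convergent series of continuous kernels) shows that $(x,y)\mapsto p(t,x,y)$ is continuous.

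For {\bf (A2)} I pass to the quotient process on the compact torus $\T^d$, whose transition density is $\bar p(t,x,y)=\sum_{k\in\Z^d}p(t,x,y+k)$; by the above $\bar p(t,\cdot,\cdot)$ is continuous and strictly positive on $\T^d\times\T^d$ for every $t>0$. Hence the Doeblin condition holds on the compact space $\T^d$, and \cite[p.~365, Theorem 3.1]{BLP} yields a unique invariant probability measure $\mu$ together with the uniform exponential ergodicity \eqref{a1-2-1}; equivalently, joint continuity and positivity of $\bar p$ give the strong Feller property and irreducibility of the torus process, so \cite[Theorem 1.1]{Wa} applies. The main obstacle is making the regime-dependent bookkeeping airtight --- the drift cancellation when $\alpha_0<1$, the symmetry when $\alpha_0=1$, and the Kato-class reading of $b$ when $\alpha_0>1$ --- so that the quoted stable-like heat-kernel theorems genuinely apply to $\sL^{\sharp}$; once $p^{\sharp}$ has the stated bounds, the passage to $p$, then to $\bar p$, and then to {\bf (A2)} is routine.
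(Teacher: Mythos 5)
The high-level plan --- well-posedness and heat-kernel estimates for a stable-like model operator, then bounded-perturbation to restore the actual large-jump part, then continuity and positivity of the transition density to get irreducibility plus strong Feller, then \cite{Wa} or Doeblin on the compact torus --- is exactly the paper's strategy. The difference is in the choice of decomposition, and there your $\sL^{\sharp}$ does not quite match the references you invoke.

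You take $\sL^{\sharp}$ to be the \emph{truncated} stable-like operator with jump kernel $k(x,z)|z|^{-d-\alpha_0}\I_{\{|z|\le 1\}}$, and then cite \cite{CZ,CZ1,XZ,Peng} for its heat kernel. But those results are stated for kernels of the form $k(x,z)|z|^{-d-\alpha_0}$ with $k$ bounded \emph{between two positive constants} on all of $\R^d\times\R^d$; the truncated kernel vanishes for $|z|>1$ and hence violates the two-sided bound those theorems require. The paper sidesteps this by taking the non-truncated operator $\sL_0$ (with $\hat\Pi(dz)=|z|^{-d-\alpha_0}\,dz$ on \emph{all} of $\R^d$) as the base model --- this is literally in the setting of \cite{CZ1} --- and then placing into the bounded perturbation $\sA$ the \emph{difference} between the actual large-jump measure $\I_{\{|z|>1\}}\Pi(dz)$ and the hypothetical stable one $\I_{\{|z|>1\}}|z|^{-d-\alpha_0}\,dz$. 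Both pieces of $\sA$ have finite total mass since $k$ is bounded, so $\sA$ is a bounded operator, and everything else goes through as in your write-up. Your gap is repairable: one can establish stable-type heat-kernel bounds for the truncated $\sL^{\sharp}$ by first running the very same bounded-perturbation/interlacing comparison between $\sL^{\sharp}$ and $\sL_0$, but that is precisely the extra step the paper's cleaner decomposition avoids. Aside from this, your interlacing argument for strict positivity and the series argument for continuity of $p$, and the reduction to $\T^d$, agree with the paper (the paper cites \cite{CW} for irreducibility and uses a Duhamel identity for strong Feller, which are equivalent to what you do). Note also the paper only carries out the details for $\alpha_0\in(1,2)$, leaving $\alpha_0\le 1$ to the reader; your case-by-case discussion of the drift cancellation and symmetry is consistent with the intended argument in those regimes.
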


\begin{proof}  For simplicity, we only prove the case that $\alpha_0\in (1,2)$, since the proofs of the cases $\alpha_0\in (0,1)$ and $\alpha_0=1$ are similar.

(1) We first assume that $\Pi (dz) =|z|^{-d-\alpha_0}\,dz$. According \cite[Theorem
1.5]{CZ1} (see also \cite[Theorem 1.4]{Peng}), there is a unique fundamental solution $p(t,x,y): \R_+\times
\R^d\times \R^d\to \R_+$ of the operator $\sL$. Then, the existence and the uniqueness of the Feller process
$X:=((X_t)_{t\ge0};(\Pp^x)_{x\in \R^d})$ associated with the operator $\sL$ was mentioned in
\cite[Remark 1.6]{CZ1}. In particular, $p(t,x,y)$ is the transition density function of the process $X$ with respect to the Lebesgue measure. By two-sided estimates and
gradient estimates of $p(t,x,y)$ stated in \cite[Theorem 1.5, (i)
and (vi)]{CZ1}, we can easily see that the process $X$ is
irreducible and enjoys the strong Feller property; that is, the
associated semigroup $(P_t)_{t\ge0}$ maps  measurable bounded functions into
continuous bounded functions.

Concerning
assumption {\bf (A1)}, it is clear that the probability law of $X$ solves
the martingale problem for $(\sL,C_c^\infty(\R^d))$ in the sense that
for every $f\in C_c^\infty(\R^d)$ and $x\in \R^d$,
$$f(X_t)-f(x)-\int_0^t \sL f(X_s)\,ds,\quad t\ge0$$ is a $\Pp^x$-martingale, see \cite[Remark 1.2 (iv)]{CZ1}.
By our assumptions on $k(x,z)$ and $b(x)$ again and the
process $X$ being
conservative (see \cite[Theorem 1.1, (iv)]{CZ1}),
$X$ solves the martingale problem for $(\sL,C_b^2(\R^d))$ as well.
If we regard
$X$ as a $\T^d$-valued process,
then the associated semigroup is still irreducible and has the strong Feller property
by the statements above, see, for example,
the proof of \cite[Proposition 1]{F2}
or the argument of \cite[Section 4]{San}. This along with \cite[Theorem 1.1]{Wa} gives us Assumption {\bf(A2)}.

(2)  Let $\sL$ be the operator given in Proposition \ref{l4-1}. Set $\hat \Pi (dz) :=|z|^{-d-\alpha_0}\,dz$ and
\begin{equation}\label{l4-1-2}
\sL_0f(x):=\int_{\R^d}\left(f(x+z)-f(x)-\langle \nabla f(x), z\rangle\I_{\{|z|\le 1\}}\right)k(x,z)\,\hat \Pi (dz) +
\langle b(x),\nabla f(x)\rangle.
\end{equation}
Then,
$$
\sL f(x)=\sL_0f(x)+ \sA f(x),
$$
where
\begin{equation}\label{e:bopeer}
\sA f(x)=\int_{\{|z|>1\}}(f(x+z)-f(x))k(x,z)\,(\Pi (dz) -\hat \Pi (dz) ).\end{equation}
It is clear that, under assumptions on $k(x,z)$ and $\Pi (dz) $, there is
a constant $c_1>0$ such that
$\| \sA f\|_\infty\le c_1\|f\|_\infty$ for all $f\in B_b(\R^d)$.
By bounded perturbation results for martingale problems, one can deduce that assumption {\bf (A1)} holds for the operator $\sL$;
see, e.g., \cite[Chapter 4, Section 10, p.\ 253]{EK}.

By the proof of Proposition 6 and Remark 8 in \cite{CW},
we know that the process $(X_t)_{t\ge 0}$ is irreducible. On the other hand, let $(P_t)_{t\ge0}$ and $(P_t^0)_{t\ge0}$ be the semigroups associated with the operator $(\sL, C_b^2(\R^d))$ and $(\sL_0, C_b^2(\R^d))$, respectively. It holds that
$$P_t f=P_t^0f+\int_0^t P_s^0 \sA P_{t-s}f\,ds,\quad t>0, f\in B_b(\R^d).$$ This along with the fact that $(P_t^0)_{t\ge0}$ has the strong Feller property yields that $(P_t)_{t\ge0}$ also enjoys the strong Feller property. Thus  Assumption {\bf(A2)} holds, thanks to \cite[Theorem 1.1]{Wa} again.
\end{proof}

\begin{proposition}\label{l4-2}
Let $\sL$ be the operator given in Proposition $\ref{l4-1}$. If $\alpha_0\in (1,2)$, then assumption ${\bf (A3)}$ is also satisfied. \end{proposition}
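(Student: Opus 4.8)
The plan is to verify Assumption {\bf (A3)} by realizing the Poisson equation $\sL\psi=f$ on $\T^d$ through the resolvent of the $\T^d$-valued Markov process $X$, using the exponential ergodicity from {\bf (A2)} to get a solution and then bootstrapping the Schauder-type elliptic estimates available for $\sL$ to obtain the gradient bound \eqref{a3-1-2}. More precisely, for $f\in C(\T^d)$ with $\mu(f)=0$, first I would set
$$
\psi(x):=\int_0^\infty \Ee_x f(X_t)\,dt,
$$
which converges absolutely and uniformly in $x$ by \eqref{a1-2-1}, and gives $\|\psi\|_\infty\le C_0\lambda_1^{-1}\|f\|_\infty$ together with $\mu(\psi)=0$ (integrate $\mu(\Ee_\cdot f(X_t))=\mu(f)=0$). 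A standard computation with the semigroup $(P_t)_{t\ge0}$ on $C(\T^d)$ shows $P_s\psi-\psi=-\int_0^s P_u f\,du$, whence $\psi\in\mathscr D(\sL)$ and $\sL\psi=-f$; replacing $f$ by $-f$ gives \eqref{a3-1-1}, and uniqueness follows from the maximum principle (a periodic solution of $\sL h=0$ is constant, hence $0$ under $\mu(h)=0$) exactly as in the construction of $(\sL,\mathscr D(\sL))$ in Section \ref{section1}.

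The remaining, and main, point is the gradient estimate $\|\nabla\psi\|_\infty\le C_1\|f\|_\infty$. Here I would exploit the splitting $\sL=\sL_0+\sA$ used in the proof of Proposition \ref{l4-1}, where $\sL_0$ is the $\alpha_0$-stable-type operator of \eqref{l4-1-2} with $\alpha_0\in(1,2)$ and $\sA$ of \eqref{e:bopeer} is a bounded operator on $B_b(\R^d)$. Rewrite the equation as $\sL_0\psi=-f-\sA\psi=:g$, where $\|g\|_\infty\le\|f\|_\infty+c_1\|\psi\|_\infty\le (1+c_1C_0\lambda_1^{-1})\|f\|_\infty$. Now invoke the interior Schauder / gradient estimates for the non-symmetric stable-like operator $\sL_0$ from \cite[Theorem 1.5(vi)]{CZ1} (equivalently \cite[Theorem 1.4]{Peng}): solutions of $\sL_0 u=g$ with $g\in C_b^\beta$ satisfy $\|\nabla u\|_\infty\preceq \|u\|_\infty+\|g\|_{C_b^\beta}$, using that $\alpha_0+\beta>1$. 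Since $\psi$ is periodic this interior estimate becomes a global one on $\T^d$. One subtlety is that a priori $g$ is only bounded, not Hölder, because $\sA\psi$ need not be Hölder; I would handle this by a bootstrap: from the heat-kernel gradient bound for $P_t^0$ (the semigroup of $\sL_0$) one first gets $\psi\in C_b^1$, hence $x\mapsto\sA\psi(x)$ is Hölder (indeed Lipschitz, using $|k(x,z)-k(y,z)|\le c_0|x-y|^\beta$ and $\Pi-\hat\Pi$ finite on $\{|z|>1\}$ with finite first moment there by \eqref{e:3.8}-type bounds), so $g\in C_b^\beta$ with norm controlled by $\|f\|_\infty$, and the full Schauder estimate applies.

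The step I expect to be the genuine obstacle is making this bootstrap quantitative with a constant $C_1$ \emph{independent of $f$}: one must track that each application of the stable-like estimates produces a universal constant depending only on $d$, $\alpha_0$, $\beta$, the ellipticity bounds $k_1\le k(x,z)\le k_2$, the Hölder seminorm of $k$, $\|b\|_{C_b^\beta}$, and the structure of $\Pi$ near $0$ and near $\infty$ — but not on the particular $f$. The cleanest route is: (i) $\|\psi\|_\infty\le C\|f\|_\infty$ from {\bf (A2)}; (ii) $\|\nabla\psi\|_\infty\le C(\|\psi\|_\infty+\|g\|_\infty)$ from the gradient bound for $\sL_0$ with merely bounded right-hand side (available from the heat-kernel gradient estimate $\|\nabla P_t^0 h\|_\infty\le C t^{-1/\alpha_0}\|h\|_\infty$ integrated against Duhamel), where $\|g\|_\infty\le C\|f\|_\infty$ by (i); this already yields \eqref{a3-1-2} without needing the refined Schauder estimate at all. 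Thus in the write-up I would present route (ii) directly, citing \cite[Theorem 1.5]{CZ1} for the needed one-sided gradient heat-kernel bound and the conservativeness, and only invoke the bounded-perturbation splitting to reduce from $\sL$ to $\sL_0$. I would close by noting that the cases $\alpha_0\in(0,1)$ and $\alpha_0=1$ (with the extra drift-centering resp.\ symmetry assumptions of Proposition \ref{l4-1}) are handled identically using the corresponding estimates in \cite{CZ1, XZ}.
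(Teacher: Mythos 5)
Your route (ii) is essentially the paper's proof: both define $\psi$ via the time-integral of the semigroup (justified by the exponential ergodicity in {\bf (A2)}), establish uniqueness through the martingale/ergodicity argument, split $\sL=\sL_0+\sA$ with $\sA$ bounded, and then obtain the gradient bound from the heat-kernel gradient estimate for the $\alpha_0$-stable-type $\sL_0$ of \cite[Theorem 1.5]{CZ1} integrated in time, using crucially that $\alpha_0>1$ so that $\int_0^1 t^{-1/\alpha_0}\,dt<\infty$. The only presentational difference is that the paper first proves the resolvent regularity $(\lambda-\sL)^{-1}\colon C(\T^d)\to C^1(\T^d)$ via the Neumann series $(\lambda-\sL)^{-1}=(\lambda-\sL_0)^{-1}\bigl(1-\sA(\lambda-\sL_0)^{-1}\bigr)^{-1}$ for $\lambda$ large and then applies the identity $\psi_f=(\lambda-\sL)^{-1}(\lambda\psi_f-f)$, whereas you work with $\sL_0\psi=g:=-f-\sA\psi$ directly, bound $\|g\|_\infty$ by $\|f\|_\infty$ using the already-established $\|\psi\|_\infty$ bound, and apply the resolvent regularity of $\sL_0$ to $\psi=(\lambda-\sL_0)^{-1}(\lambda\psi-g)$. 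Both computations are equivalent; yours avoids the Neumann series and the restriction $\lambda>\lambda_0$, and you correctly discard the Schauder-bootstrap route (i) as unnecessary for the statement as posed.
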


\begin{proof} Similar to the proof of Proposition  \ref{l4-1},
let $\sL_0$ be defined by \eqref{l4-1-2}. We write
$$
\sL_0 f(x) =\int_{\R^d}\left(f(x+z)-f(x)-\langle \nabla f(x), z\rangle\right)k(x,z)\,\hat \Pi (dz) +
\langle \hat b_\infty(x),\nabla f(x)\rangle,
$$
where $$\hat \Pi (dz) :=\frac{1}{|z|^{d+\alpha_0}}\,dz,\quad \hat b_\infty(x):=b(x)+\int_{\{|z|>1\}}z\frac{k(x,z)}{|z|^{d+\alpha_0}}\,dz.$$
Let $p_0(t,x,y)$ and $P_t^0$ be the fundamental solution and Markov semigroup associated with
$\sL_0$ respectively. Note that $\hat b_\infty\in C_b^\beta(\R^d;\R^d)$. By \cite[Theorem 1.5]{CZ1}, for any $t\in (0,1]$ and $x,y\in \R^d$,
\begin{equation}\label{p7-1-1}
\begin{split}
 p_0(t,x,y)\le & \frac{ c_0 t}{\left(t^{1/\alpha_0}+|x-y|\right)^{d+\alpha_0}},\\
|\nabla_x p_0(t,\cdot,y)(x)|\le &\frac{ c_0 t^{1-1/\alpha_0}}{\left(t^{1/\alpha_0}+|x-y|\right)^{d+\alpha_0}}.
\end{split}
\end{equation}
(Note that in our setting we can take $\eta=0$ in \cite[Theorem
1.5]{CZ}, see also \cite[Theorem 1.4]{Peng}.)
For any $\lambda>0$, let $R_\lambda^0$ be the $\lambda$-resolvent of the semigroup $(P_t^0)_{t\ge0}$, i.e.,
$$
R_\lambda^0 f(x):=\int_0^\infty e^{-\lambda t}P_t^0 f(x) \,dt,\quad f\in C(\T^d)\,x\in \R^d.
$$
According to \eqref{p7-1-1}, we can see that $R_\lambda^0$ is an operator
such that $R_\lambda^0: C(\T^d)\to C^1(\T^d)$ so that
$$
\|R_\lambda^0 f\|_\infty+\|\nabla R_\lambda^0 f\|_\infty\le \frac{c_1}{\lambda}\|f\|_\infty,\quad f\in C(\T^d).
$$
where $c_1$ is a positive constant independent of $\lambda$ and $f$. Here, we used the fact that $\alpha_0\in (1,2)$.
Furthermore,
It is well known that $R_\lambda^0=(\lambda-\sL_0)^{-1}$. Thus,
$(\lambda -\sL_0)^{-1}:C(\T^d)\to C^1(\T^d)$ and
\begin{equation}\label{p7-1-3}
\|(\lambda-\sL_0)^{-1} f\|_\infty+\|\nabla (\lambda -\sL_0)^{-1}f\|_\infty\le \frac{c_1}{\lambda}\|f\|_\infty,\quad f\in C(\T^d).
\end{equation}

Let $\sA f$ be defined by \eqref{e:bopeer}. By the assumption on $k(x,z)$, $\sA:C(\T^d)\to C(\T^d)$  satisfies that
\begin{equation}\label{p7-1-4}
\| \sA f\|_\infty\le c_2\|f\|_\infty,\quad f\in C(\T^d).
\end{equation}
Note that $\sL=\sL_0+\sA$. Then, for each $\lambda>0$,
$$
(\lambda -\sL)^{-1}=(\lambda -\sL_0)^{-1}\left(1-\sA(\lambda -\sL_0)^{-1}\right)^{-1}.
$$
Therefore, combining \eqref{p7-1-3} with \eqref{p7-1-4}, we  find  that
for every $\lambda>\lambda_0:=c_1c_2>0$, $(\lambda -\sL)^{-1}:C(\T^d)\to C^1(\T^d)$ is well defined such that
\begin{equation}\label{p7-1-5}
\|(\lambda -\sL)^{-1} f\|_\infty+\|\nabla (\lambda -\sL)^{-1}f\|_\infty\le c_3(\lambda)\|f\|_\infty,\quad \,
\lambda>\lambda_0,\ f\in C(\T^d).
\end{equation}

For every $f\in C(\T^d)$ with $\mu(f)=0$, let $\psi_f:=-\int_0^\infty P_t f\,dt$, which is well defined by
\eqref{a1-2-1}. Moreover, $\psi_f\in \mathscr{D}(\sL)$, $\sL \psi_f=f$, $\mu(\psi_f)=0$
 and $\|\psi_f\|_\infty\le c_4\|f\|_\infty$.
In particular, for every $\lambda>\lambda_0$, it holds that $$\psi_f=(\lambda-\sL)^{-1}\left(\lambda \psi_f-f\right).
$$
Hence by \eqref{p7-1-5},
for any $\lambda>\lambda_0$ we obtain
$$
\|\psi_f\|_\infty+\|\nabla \psi_f\|_\infty\le c_3(\lambda)\|\lambda \psi_f-f\|_\infty\le c_5(\lambda)\|f\|_\infty.
$$
Let $(X_t^x)_{t\ge 0}$ be the process associated with the martingale problem for $(\sL, C_b^2(\R^d))$
with initial value $x$. Let $f\in C(\T^d)$ such that $\mu(f)=0$. Then, for every $\psi\in \mathscr{D}(\sL)$ satisfying $\sL \psi =f$
and $\mu(\psi)=0$,
we have $\Ee[\psi(X_t^x)]=\psi(x)+\int_0^t \Ee[f(X_s^x)]\,ds$. Letting $t\to \infty$ and applying
\eqref{a1-2-1}, we get $\psi(x)=-\int_0^\infty P_s f(x)\,ds$. This means there exists a unique
$\psi\in \mathscr{D}(\sL)$ satisfying $\sL \psi =f$. Therefore, according to all the conclusions above, we prove that Assumption {\bf (A3)} holds.
\end{proof}

\begin{remark}\label{r7-1}
For simplicity, in Proposition \ref{l4-1}
 we require $\Pi (dz) $ and $b(x)$ to have special forms; for instance, $\Pi (dz) $ satisfies \eqref{l4-1-1} and
$b(x)=\int_{\R^d} z\frac{k(x,z)}{|z|^{d+\alpha_0}}\,dz$ when $\alpha_0\in (0,1)$. These conditions
are used to verify Assumption {\bf (A3)} under minimal regularity requirements on $k(x,z)$ and
$b(x)$. Indeed, under more general assumptions on $k(x,z)$ and $b(x)$ (that is, it is not required that $b(x)=\int_{\R^d} z\frac{k(x,z)}{|z|^{d+\alpha_0}}\,dz$
when $\alpha_0\in (0,1)$), we can still verify {\bf (A1)}, {\bf (A2)} (see \cite{Ku} for details)
and weaken \eqref{a3-1-2} into
\begin{equation}\label{a3-1-2--}
\|\psi\|_\infty+\|\nabla \psi\|_\infty\le C_1\|f\|_{C^\beta}.
\end{equation} Then, under the conditions above, Theorems \ref{t3-2}, \ref{t3-3} and \ref{t3-4} still hold true with some small modifications
in their proofs.
We note that \eqref{a3-1-2--} is closely related to
the Schauder estimates
for  L\'evy-type operators,
see \cite{BK,Bass, DK, Ku0, Ku1} and references therein for more details.

Moreover, suppose that $k\in C_b^\infty(\R^d\times \R^d)$ and
$b\in C_b^\infty(\R^d;\R^d)$. Then, by using the theory of pseudo-differential operators, we can prove the existence of
the Feller process
$X:=((X_t)_{t\ge0};(\Pp_x)_{x\in \R^d})$ associated with $(\sL,C_c^\infty(\R^d))$, and moreover the process $X$ can be written explicitly via a solution
of a stochastic differential equation
with jumps; see \cite[Chapter 3]{BSW} for more details. Hence, we may
obtain the following estimates
for the associated semigroup $(P_t)_{t\ge0}$ through the Bismut-type formula (see \cite{Kun} and references therein):\begin{align*}
&\|\nabla P_tf\|_\infty\le c_1\|\nabla f\|_\infty,\quad  0<t\le1,\\
&\|\nabla P_t f\|_\infty\le c_2\|f\|_\infty,\quad 1/2<t<1.
\end{align*}
According to these estimates and \eqref{a1-2-1}, we can find that for every $f\in C^1(\T^d)$ with $\mu(f)=0$, there exists a unique
$\psi \in \mathscr{D}(\sL)$ such that $\sL \psi=f$, $\mu(\psi)=0$ and
$$
\|\psi\|_\infty+\|\nabla \psi\|_\infty\le c_3\left(\|f\|_\infty+\|\nabla f\|_\infty\right).
$$
This also suffices to prove Theorems \ref{t3-2}, \ref{t3-3} and \ref{t3-4} with some modifications
in the proofs. \end{remark}

\medskip

\noindent {\bf Acknowledgements.}
The authors thank Boris Solomyak for the reference \cite{B} on almost periodic functions.
The research of Xin Chen is
supported by the National Natural Science Foundation of China (No.\ 11871338).\ The research of Zhen-Qing Chen is  partially supported
by Simons Foundation grant 520542 and a Victor Klee Faculty
Fellowship at UW.\ The research of Takashi Kumagai is supported by
JSPS KAKENHI Grant Number JP17H01093 and by the Alexander von
Humboldt Foundation.\ The research of Jian Wang is supported by the
National Natural Science Foundation of China (No.\ 11831014), the
Program for Probability and Statistics: Theory and Application (No.\
IRTL1704) and the Program for Innovative Research Team in Science
and Technology in Fujian Province University (IRTSTFJ).

\vskip 0.3truein

\noindent {\bf Xin Chen:}
   Department of Mathematics, Shanghai Jiao Tong University, 200240 Shanghai, P.R. China. \texttt{chenxin217@sjtu.edu.cn}

\bigskip

\noindent{\bf Zhen-Qing Chen:}
   Department of Mathematics, University of Washington, Seattle,
WA 98195, USA. \texttt{zqchen@uw.edu}

\bigskip

\noindent {\bf Takashi Kumagai:}
 Research Institute for Mathematical Sciences,
Kyoto University, Kyoto 606-8502, Japan.
\texttt{kumagai@kurims.kyoto-u.ac.jp}

\bigskip

\noindent {\bf Jian Wang:}
    College of Mathematics and Informatics  \&
Fujian Key Laboratory of Mathematical Analysis and Applications (FJKLMAA)  \& Center for Applied Mathematics of Fujian Province (FJNU), Fujian Normal University, 350007 Fuzhou, P.R. China. \texttt{jianwang@fjnu.edu.cn}

\end{document}